\newtheorem{theorem}{Theorem}[section]
\newtheorem{lemma}[theorem]{Lemma}
\newtheorem{corollary}[theorem]{Corollary}
\newtheorem{proposition}[theorem]{Proposition}
\theoremstyle{definition}
\newtheorem{definition}[theorem]{Definition}
\newtheorem{convention}[theorem]{Convention}
\newtheorem{remark}[theorem]{Remark}
\newcommand\R{\mathbb{R}}
\newcommand\Z{\mathbb{Z}}
\newcommand\T{\mathbb{T}}
\newcommand\C{\mathbb{C}}
\newcommand\N{\mathbb{N}}
\newcommand{\qtq}[1]{\quad\text{#1}\quad}
\newcommand\eps{\varepsilon}
\newcommand{\vk}{\varkappa}
\newcommand{\bigO}{\mathcal O}
\newcommand{\h}{\textup{\textsf{H}}}
\newcommand{\hbo}{H_{\text{\textup{BO}}}}
\newcommand{\hk}{H_\kappa}
\newcommand{\BA}{B^s_{\negmedspace A}} 
\newcommand{\tint}{{{\textstyle\int} q}}
\newcommand{\btint}{\bigl[\tint\bigr]}
\newcommand{\sess}{\sigma_{\mkern-1mu\text{\upshape ess}}}
\let\Re=\undefined\DeclareMathOperator{\Re}{Re}
\let\Im=\undefined\DeclareMathOperator{\Im}{Im}
\DeclareMathOperator{\Id}{Id}
\DeclareMathOperator{\tr}{tr}
\DeclareMathOperator{\sgn}{sgn}
\newcommand{\CofP}{\textsl{CofP}}
\newcommand{\CofE}{\textsl{CofE}}
\newcommand{\CofB}{\textsl{Cof$\beta$}}
\newcommand{\VofP}{\textsl{VofP}}
\newcommand{\mcPbk}{\mc P^\beta_\kappa}
\newcommand{\olN}{{\,\ol{\! N}}}
\newcommand{\olM}{{\,\ol{\! M}}}
\newcommand{\del}{\delta}
\newcommand{\mc}[1]{\mathcal{#1}}
\newcommand{\ol}[1]{\overline{#1}}
\newcommand{\wh}[1]{\widehat{#1}}
\newcommand{\wt}[1]{\widetilde{#1}}
\newcommand{\norm}[1]{\left\lVert#1\right\rVert}
\newcommand{\snorm}[1]{\lVert#1\rVert}
\newcommand{\bnorm}[1]{\big\lVert#1\big\rVert}
\numberwithin{equation}{section}
\begin{document}

\title[Sharp well-posedness for the Benjamin--Ono equation]{Sharp well-posedness for\\the Benjamin--Ono equation}

\author[R.~Killip]{Rowan Killip}
\address{Department of Mathematics, University of California, Los Angeles, CA 90095, USA}
\email{killip@math.ucla.edu}

\author[T.~Laurens]{Thierry Laurens}
\address{Department of Mathematics, University of California, Los Angeles, CA 90095, USA}
\email{laurenst@math.ucla.edu}

\author[M.~Vi\c{s}an]{Monica Vi\c{s}an}
\address{Department of Mathematics, University of California, Los Angeles, CA 90095, USA}
\email{visan@math.ucla.edu}

\begin{abstract}
The Benjamin--Ono equation is shown to be well-posed, both on the line and on the circle, in the Sobolev spaces $H^s$ for $s>-\tfrac12$.  The proof rests on a new gauge transformation and benefits from our introduction of a modified Lax pair representation of the full hierarchy.  As we will show, these developments yield important additional dividends beyond well-posedness, including (i) the unification of the diverse approaches to polynomial conservation laws; (ii) a generalization of G\'erard's explicit formula to the full hierarchy; and (iii) new virial-type identities covering all equations in the hierarchy.
\end{abstract}

\maketitle

\tableofcontents

\section{Introduction} \label{s:intro}

This paper is devoted to the study of real-valued solutions to the Benjamin--Ono equation
\begin{equation}\tag{BO}\label{BO} 
    \tfrac{d}{dt} q =  \h q'' - 2qq' ,
\end{equation}
which describe the motion of internal waves in stratified fluids of great total depth.  The symbol $\h$ appearing here denotes the Hilbert transform; see \eqref{HT}.

This model arose contemporaneously in works by Benjamin~\cite{Benjamin1967} and by Davis--Acrivos \cite{Davis1967}.  The latter authors also performed extensive experiments in a tank using fresh water floating on an equal volume of salt water.  They observed excellent agreement.

Inspired by these works, Ono undertook a series of investigations of \eqref{BO}, beginning with \cite{Ono1975}, which in turn generated considerable interest in this model.  Among his contributions was the suggestion that the ease with which Davis and Acrivos were able to generate solitary wave solutions in the tank may be taken as a sign that these were, in fact, soliton solutions of the type then only recently discovered in the context of the Korteweg--de Vries equation.

With our sign conventions, these solitary waves take the form 
\begin{equation}\label{soliton Qc}
Q_c(t,x) = \frac{2c}{c^2 (x-ct)^2 + 1} \qtq{with} c>0.
\end{equation}
They are positive and travel to the right. By comparison, solutions of the linearized equation $\frac{d}{dt} q = \h q''$ travel to the left.

An instantly striking feature of the functions $Q_c$ is their mere algebraic decay.  This is ultimately traceable to the presence of the Hilbert transform in \eqref{BO}, which in turn expresses the highly nonlocal nature of the wave dynamics.  This nonlocality originates from the depth of the fluid; in shallow water, both surface and internal waves have been successfully modeled by local equations such as KdV.

We will be studying the initial-value problem for \eqref{BO} posed both on the real line $\R$ and on circle $\T=\R/\Z$, the latter being equivalent to the study of an initially periodic excitation.  A key determiner of which classes of initial data may be expected to lead to well-behaved solutions is the scaling symmetry.  For the \eqref{BO} equation, this takes the form
\begin{align}\label{scaling}
q(t,x) \mapsto q_\lambda(t,x) = \lambda q(\lambda^2 t, \lambda x) \qquad\text{for $\lambda>0$}
\end{align}
and identifies $s=-\frac12$ as the scaling-critical regularity for $H^s$ spaces.

Conservation laws also play a major role in identifying natural classes of initial data and in demonstrating that such classes are dynamically invariant.  Basic physical considerations already present us with three such conserved quantities: the momentum and the energy are given by
\begin{equation}\label{P&H}
P(q) = \int \tfrac{1}{2}q^2\, dx \qtq{and}
\hbo(q) = \int   {\tfrac{1}{2} q\h q' } - \tfrac{1}{3} q^3 \, dx,
\end{equation}
while $\int q$ denotes the surplus of water relative to equilibrium $q\equiv 0$.

The energy functional $\hbo$ serves as the Hamiltonian for \eqref{BO} with respect to the Poisson structure
\begin{equation}\label{Poisson}
\{ F,G \} = \int \tfrac{\del F}{\del q}(x) \cdot \big( \tfrac{\del G}{\del q} \big)'(x)\, dx ,
\end{equation}
while the momentum functional $P$ generates translations.

By comparison, $\int q$ is a Casimir.  This functional will play a limited role in what follows.  In the circle case, it can be altered by redefining the notion of equilibrium depth.  Physically, this amounts to exploiting the Galilei symmetry of \eqref{BO}: if $\widetilde q(t,x)$ is a solution, then so to is
\begin{equation}\label{E:Galilei}
q(t,x) = \widetilde  q(t,x-2ct) + c.
\end{equation}

In the line setting, one cannot use the Galilei transform to force $\int q=0$.  More significantly from our point of view, is the fact that one needs to impose rather strong decay assumptions in order to make sense of this quantity.   Moreover, regularity hypotheses must also be imposed to ensure that any such $L^1$ assumption is not immediately destroyed by wave dispersion.

Our first main result is the well-posedness of the \eqref{BO} flow under minimal assumptions on the initial data.  As we will see, this has been a much studied problem and our resolution depends not only on the recently introduced method of commuting flows, but also on the development of broader algebraic and analytic structures underlying the \eqref{BO} equation.  In subsection~\ref{ss:applic}, we will discuss several other dividends of these developments,  not directly related to well-posedness.

\begin{theorem}\label{t:main}
Fix $s > -\frac{1}{2}$.  The equation~\eqref{BO} is globally well-posed for initial data in $H^s(\R)$ or $H^s(\T)$.
\end{theorem}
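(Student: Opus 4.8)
The plan is to combine a priori estimates extracted from the conservation laws with the method of commuting flows, with the low-regularity analysis made possible by a gauge transformation that removes the derivative loss in the \eqref{BO} nonlinearity. First I would establish, for Schwartz initial data, the a priori bound $\snorm{q(t)}_{H^s}\lesssim\snorm{q(0)}_{H^s}$ uniformly in $t\in\R$, together with equicontinuity of the orbit $\{q(t):t\in\R\}$ in $H^s$. Both are consequences of a one-parameter family of conserved quantities $\alpha(\kappa;q)$ --- perturbation-determinant-type functionals of the modified Lax operator: expanded as $\kappa\to\infty$ these recover the classical polynomial conservation laws, but the essential input is a coercivity statement, namely that for each $s\in(-\tfrac12,0]$ and each bounded $B\subset H^s$ a suitable combination of the $\alpha(\kappa;\cdot)$ (or an integral against $d\kappa$) is comparable to $\snorm{q}_{H^s}^2$ uniformly over $q\in B$ once $\kappa$ is large. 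The value $s=-\tfrac12$ is exactly where this quadratic form ceases to be coercive; since this is also the scaling-critical exponent of \eqref{scaling}, the result is sharp. Equicontinuity follows because the same quantities control the high-frequency tails of $q(t)$ uniformly in $t$, and global existence is then automatic.

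Next I would set up the regularized flows. Define Hamiltonians $\hk(q)$ through the resolvent $(L_q+\kappa)^{-1}$ of the modified Lax operator, normalized so that the associated Hamiltonian vector field converges to that of $\hbo$ as $\kappa\to\infty$. Two properties are required. First, the $\hk$ flow should be globally well-posed on every $H^s$ with $s>-\tfrac12$: after conjugating by the gauge transformation its nonlinearity is no longer derivative-losing, so the flow can be built directly --- e.g.\ by a contraction mapping on the gauge variable --- with global control again supplied by the conserved quantities above, which are preserved by every flow in the hierarchy, not just by \eqref{BO}. Second, $\{\hk,\hbo\}=0$, so that the $\hk$ and \eqref{BO} flows commute wherever both are classically defined.

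The analytic heart of the argument is the convergence statement: for smooth data $q$, the flow $\Phi_\kappa(t)q$ of $\hk$ converges in $C_tH^s$ as $\kappa\to\infty$ to the solution of \eqref{BO}, at a rate depending only on $\snorm{q}_{H^s}$. Using commutativity, the difference flow $\Phi_{\mathrm{BO}}(-t)\circ\Phi_\kappa(t)$ is precisely the flow of the Hamiltonian $\hk-\hbo$; since its vector field tends to $0$ as $\kappa\to\infty$ in a norm one derivative weaker than $H^s$, one first obtains convergence in that weaker norm and then upgrades to $H^s$ using the equicontinuity from the first step. Here, once more, the gauge transformation is what allows the difference of the two otherwise derivative-losing vector fields to be estimated at this regularity at all. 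Because the rate depends only on $\snorm{q}_{H^s}$, the convergence is uniform over bounded --- hence, by the first step, precompact --- sets of smooth data.

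To conclude, for arbitrary $q\in H^s$ pick Schwartz $q_n\to q$ in $H^s$; the set $\{q_n\}_n\cup\{q\}$ is precompact, so a $3\eps$ argument --- the $\hk$ flows converging uniformly on this set as $\kappa\to\infty$, each $\Phi_\kappa(t)$ being continuous in the data --- produces a limit $\Phi_{\mathrm{BO}}(t)q:=\lim_{\kappa\to\infty}\lim_{n\to\infty}\Phi_\kappa(t)q_n$ in $C_tH^s$ that is independent of the approximating sequence, depends continuously on $(t,q)$, and solves \eqref{BO} in the distributional sense; it is global in time by the a priori bounds. Uniqueness follows by running the same commuting-flows comparison for any two putative solutions with the same data. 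The periodic problem is treated in parallel, the Hilbert transform and Lax operator being replaced by their circle analogues and the Galilei symmetry \eqref{E:Galilei} used to reduce to $\int q=0$. I expect the chief obstacle to be the convergence estimate of the third step as $s$ approaches the critical value $-\tfrac12$: the derivative in the \eqref{BO} nonlinearity makes any direct comparison of the \eqref{BO} and $\hk$ vector fields lose too much, and it is only a well-chosen gauge transformation --- together with the resulting quantitative, data-uniform control of the $\kappa\to\infty$ limit --- that lets the scheme close.
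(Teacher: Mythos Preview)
Your outline matches the paper's strategy closely: conservation of $\beta(\kappa;q)$ (which is $\partial_\kappa$ of your $\alpha$, up to normalization) supplies the a~priori $H^s$ bounds and equicontinuity; a regularized Hamiltonian $\hk$ built from the resolvent yields a globally well-posed commuting flow; and the difference flow $e^{tJ\nabla(\hbo-\hk)}$ is shown to converge to the identity after passing to a gauge variable, giving convergence of $H^\infty$ solutions whose data converge in $H^s$ and hence a continuous extension of the data-to-solution map.

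The genuine gap is that you do not identify the gauge, and this is precisely the new idea. Without it the scheme is the framework Talbut~\cite{Talbut2021} already had, which stalled at $L^2$ because for $s<0$ one cannot make sense of $q^2$ even as a distribution, so the \eqref{BO} vector field is not defined on $H^s$ and there is nothing to compare $\hk$ to. The paper's gauge is $n:=m(\vk;q)=(\mc L+\vk)^{-1}q_+$, the Jost-type solution of the Lax operator. The mechanism that makes this work is Lemma~\ref{L:+*+}: products of two $H^r_+$ functions are well-defined distributions even for $r<0$, so the terms $q_+ n'$ etc.\ in $\tfrac{d}{dt}n=\mc P n$ make sense. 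One then needs a substantial rewriting (Lemma~\ref{L:best n dot}) of $\tfrac{d}{dt}n$ under both flows so that the difference is visibly $o(1)$ in $H^{-2}$ as $\kappa\to\infty$; this in turn rests on a modified Lax pair (Proposition~\ref{P:HK}) whose Peter operator has the special property $\tfrac{d}{dt}q_+=\mc P_\kappa q_+$. The paper recounts that Tao's gauge, the diagonal Green's function, and (initially) the Bock--Kruskal transform were all tried and discarded before arriving at $m$; your phrase ``a well-chosen gauge transformation'' is exactly the content that needs to be supplied.

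Two smaller points. The paper does not reduce the circle case to mean zero via Galilei; it carries the $\int q$ terms throughout in $\hk$ and $\mc P_\kappa$. And the well-posedness proved is unique continuous extension of the solution map from $H^\infty$ to $H^s$, so uniqueness is not obtained by comparing two arbitrary $H^s$ solutions as you suggest, but is immediate from the construction.
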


As we will discuss more fully below, the long-standing record on the line was well-posedness for $s\geq 0$.  This was also the threshold for the circle case until the very recent breakthrough \cite{Gerard2020}, which proved well-posedness for all $s>-\frac12$.   The paper \cite{Gerard2020} also shows ill-posedness in $H^{-1/2}(\T)$ via instantaneous norm inflation.  A simple argument showing the breakdown of well-posedness for $s<-\tfrac12$ was known much earlier \cite{AnguloPava2010,MR1837253}.  In the line case, ill-posedness for $s<-\frac12$ can be deduced from the fact that the solutions \eqref{soliton Qc} converge in $H^s(\R)$ to a delta function at $t=0$ as $c\to\infty$, but do not converge at any other time.

\subsection{Prior work on well-posedness}\label{ss:prior}
Here we give a quick overview of the history of well-posedness for \eqref{BO}; for a comprehensive account, we recommend the recent book \cite{MR4400881}.  The first phase in these developments was the construction of weak solutions; see, for example, \cite{Ginibre1989a,Ginibre1989,Ginibre1991,Saut1979}.

Early proofs of well-posedness employed energy/uniqueness arguments; see, for example, \cite{Abdelouhab1989,Iorio1986,Ponce1991,Saut1979}.  Included in \cite{Abdelouhab1989} is a proof that \eqref{BO} is well-posed in $H^\infty$ in both geometries.  This period culminated in the proof that \eqref{BO} is well-posed in $H^s$ for $s>\frac32$ on $\T$ and for $s\geq \frac32$ on $\R$.  The endpoint in the line setting was achieved in \cite{Ponce1991} by incorporating local smoothing into the traditional Gronwall argument.

A striking feature of \eqref{BO} is that there was no subsequent Strichartz revolution, nor did the development of $X^{s,b}$ analysis immediately transform the study of \eqref{BO}.   There is a fundamental reason for this: \eqref{BO} is not analytically well-posed in any $H^s(\R)$ space!  This was first demonstrated in \cite{Molinet2001}, which proved that the data-to-solution map is not $C^2$.  Later in \cite{Koch2005} it was shown that for $s\geq 0$, this map is not even uniformly continuous in any neighborhood of the origin.

By their very nature, proofs by contraction mapping yield a data-to-solution map that is real-analytic.  The results discussed in the previous paragraph show that \eqref{BO} cannot be solved by this method, no matter what auxiliary norms are introduced, nor what ingenious estimates one proves.

By incorporating Strichartz control into energy methods, \cite{Koch2003} advanced well-posedness on the line to $s> \frac54$.  Further refinements of this style of argument in \cite{Kenig2003} led to well-posedness for $s>\frac98$.

The well-posedness theory for \eqref{BO} was much transformed by the paper \cite{Tao2004} which treated data in $H^1(\R)$. The transformative new idea here was the introduction of a gauge (a change of unknown) that substantially ameliorated the troublesome high-low frequency interaction responsible for the poor behavior of the data-to-solution map just discussed. The motivation for this gauge transformation is described in \cite[\S4.4]{MR2233925}, including parallels with the Cole--Hopf transformation.  Attention is also drawn to an analogue for the derivative nonlinear Schr\"odinger equation (cf. \cite{MR700302}).

By exploiting Tao's gauge transformation, well-posedness in $H^1(\T)$ was subsequently shown in \cite{Molinet2009}.  Well-posedness in $H^1$ is automatically global due to the conservation of
\begin{equation}\label{H2 defn}
H_2 := \int \tfrac12 \big[q'\bigr]^2  - \tfrac34 q^2 \h q' +  \tfrac14 q^4 \,dx .
\end{equation}

Tao's gauge transformation lead to a flurry of progress on the well-posedness problem, including \cite{Burq2006} which treated  $s>\frac{1}{4}$ on $\R$ and \cite{Molinet2007} which treated $s\geq \frac{1}{2}$ on~$\T$.  Evidently, both yield well-posedness for finite energy initial data.

As noted earlier, the long-standing record for \eqref{BO} on the line was well-posedness in $L^2(\R)$.  This was proved in \cite{Ionescu2007} via a synthesis of Tao's gauge transformation and $X^{s,b}$ techniques.  Well-posedness in $L^2(\T)$ was proved in \cite{Molinet2008} via consonant methods.

Well-posedness in \cite{Ionescu2007} means that the data-to-solution map admits a unique continuous extension from smooth initial data to a mapping from $H^s$ to $C_t H^s$.   This is also the meaning of Theorem~\ref{t:main}.

The landmark papers \cite{Ionescu2007,Molinet2008} stubbornly resisted improvement for a long period.  The topic of well-posedness in $L^2$ has been revisited several times via a variety of methods without yielding any improvement on the $H^s$ scale; see, \cite{Ifrim2019,Molinet2012,Talbut2021}.

Gibbs-distributed initial data on the circle (with momentum cutoff) lies right at cusp of the $L^2$ theory.  The existence of solutions and  preservation of this law was shown in \cite{MR3346690}.   Although the subsequent work \cite{Gerard2020} proves that Gibbs initial data leads to global solutions, it is unclear to us how readily this approach leads to invariance of the Gibbs law.  By comparison, the manner in which we prove Theorem~\ref{t:main} is well-suited to this problem.  The proof of \cite[Th.~3.4]{MR4145790} demonstrates how the method of commuting flows blends seamlessly with invariance of measure arguments in finite volume.

On the circle, the question of well-posedness in $H^s$ spaces was recently completely resolved in \cite{Gerard2020}, namely, the equation is well-posed for $s>-\frac12$ and ill-posed otherwise.  This is achieved through the construction of a Birkhoff normal form transformation developed in a series papers; see, for example, \cite{MR4275336,MR4155287}.  This approach is reminiscent of the earlier breakthrough \cite{MR2267286} for the KdV equation; however, the Lax operator \eqref{L defn} associated to \eqref{BO} is of an unconventional type, especially when compared to the much-studied Sturm--Liouville operators associated with KdV.

The direct analogue of such an approach to Theorem~\ref{t:main} on the line would be via inverse scattering, which is currently utterly untenable.  The only complete theory of both forward and inverse scattering is that of \cite{MR1073870}.  This requires weighted $L^1$ hypotheses that are incompatible with the soliton solutions \eqref{soliton Qc}, as well as a small data hypothesis.  The state of the art for the forward scattering problem is presented in \cite{Wu2017}, which requires $\langle x\rangle^\alpha q\in L^2$ for $\alpha >\frac12$.  Much remains to be done to advance the inverse scattering theory up to this threshold.

Our pessimism regarding an inverse scattering approach to Theorem~\ref{t:main} is also informed by the state of the art regarding the inverse scattering problem for the Schr\"odinger equation, which has been intensively studied for generations.  This is what is relevant to the KdV equation.   At this moment, strong spatial decay assumptions are required, which then beget regularity hypotheses (to preserve such decay at later times).  For a discussion of the significant hurdles associated with this approach already in the KdV setting, see, for example, \cite{Killip2019}.   Later in the introduction we will draw attention to some interesting questions in the spectral theory of the Lax operator $\mc L$ for \eqref{BO} that arise naturally from this perspective.

In this paper, we will approach the well-posedness problem via the method of commuting flows introduced in \cite{Killip2019} and developed in several subsequent papers \cite{MR4304314,harropgriffiths2022global,harropgriffiths2020sharp,killip2021wellposedness,MR4356987,MR4541923,MR4520307}.  This strategy was previously employed in \cite{Talbut2021}; however, the culmination of Talbut's work was well-posedness in $L^2$, both on the line and on the circle.  It will take us some time to explain the obstacles that lay in Talbut's path and how we are able to overcome them.

\subsection{The Lax structure}\label{ss:Lax}
A Lax-pair representation of \eqref{BO} appeared first in \cite{Nakamura1979} and then more directly in \cite{Bock1979}.  Our presentation here is also influenced by \cite{MR3484397}, where it is shown that any negative eigenvalues of $\mc L$ are necessarily simple.

Both operators of the Lax pair act on the Hardy space $L^2_+$ comprised of those functions in $L^2$ whose Fourier transform is supported on $[0,\infty)$.  Such functions may also be viewed as the boundary values of certain holomorphic functions in the upper half-plane or disk, depending on the geometry.  We avoid the more popular $H^p$ notation for the Hardy spaces because it collides with our notations for Sobolev spaces, Hamiltonians, and for the Hilbert transform $\h$.  We will write $C_\pm$ for the Cauchy--Szeg\H{o} projections; see \eqref{C defn}. 

In Proposition~\ref{P:Lax} we will show that the formal expression
\begin{equation}\label{L defn}
\mc L f = -i f' - C_+\bigl( q f\bigr)
\end{equation}
defines a semi-bounded selfadjoint operator $\mc L$ on $L^2_+$ for every $q\in H^s$ with $s>-\tfrac12$.  Its companion in the Lax pair is variously given as
\begin{equation}\label{P defn}
\mc P := - i\partial^2 - 2 \partial C_+q + 2 q'_+ \qtq{or} 
	\mc P -  i\mc{L}^2 = i C_+(\h q') - iC_+qC_+q .
\end{equation}
Following \cite{MR3484397}, we will insist on the former; the latter is the original one from \cite{Bock1979,Nakamura1979}.  These operators are transparently anti-selfadjoint when $q\in H^\infty$ and we shall not need to make sense of them for more irregular functions $q$.

Earlier, we promised to draw attention to some basic questions in the spectral theory of $\mc L$ that we regard as both intrinsically interesting and crucial milestones toward understanding inverse scattering for slowly decreasing initial data on the line.  Specifically, we ask what is the decay threshold for $q$,  expressed via power-law and/or $L^p$ integrability exponent, at which each of the following spectral transitions takes place:
\begin{itemize}
\item The appearance of embedded eigenvalues;
\item The appearance of embedded singular-continuous spectrum;
\item The disappearance of absolutely continuous spectrum. 
\end{itemize}
Note that for any $q\in L^p(\R)$, $p<\infty$, Weyl's Theorem guarantees that the essential spectrum of $\mc L$ fills $[0,\infty)$.  Our questions seek to clarify the spectral type.  The only progress on these problems of which we are aware is the paper \cite{Sun2021}, which shows absence of embedded eigenvalues when $\langle x\rangle q \in L^2$.   For a discussion of these problems in the setting of one-dimensional Schr\"odinger operators, see \cite{MR2307748,MR2310217}.

\subsection{Conservation laws}\label{ss:conserve}
We have already seen several conserved quantities for \eqref{BO} in \eqref{P&H} and \eqref{H2 defn}.  Although Theorem~\ref{t:main} requires conservation laws at lower regularity, we will first discuss the general family of `polynomial' conservation laws because it will highlight several important characters, as well as introduce some of our broader goals in this paper.

At present, there are multiple competing approaches to understanding these polynomial conservation laws; see, for example, \cite{Miller2012} for an accessible and succinct review.  As an offshoot of the developments needed for Theorem~\ref{t:main}, we will offer a new unity between these approaches by connecting them back to the central objects of our analysis.

The first demonstrations \cite{Bock1979,Nakamura1979} that \eqref{BO} admits infinitely many conservation laws followed the approach of \cite{MR252826}, by introducing one-parameter families of Miura-type transformations.  The connection between these two papers was later explained in \cite{MR784923}.  We will revisit the Bock--Kruskal approach in subsection~\ref{ss:Bock}; in Theorem~\ref{T:BK}, we link the Bock--Kruskal transformation to our own gauge.

A completely different approach was introduced in \cite{Fokas1981}, which presented a vector field $\tau$ which recursively generates conserved densities via forming commutators.  We will discuss this further in subsection~\ref{ss:higher} before presenting our own generalization in Section~\ref{S:L&P}; see Theorem~\ref{T:Virial}.

Another perspective on the conservation laws grew out of the development of an inverse scattering approach to \eqref{BO}, as detailed in \cite{MR690743,Fokas1983,Kaup1998a,Kaup1998}.  Already in \cite{MR690743}, it is remarked that the quantity
\begin{equation}\label{Nbar q}
\int q(x) \olN (x;z,q) \,dx
\end{equation}
is conserved under the \eqref{BO} flow.  Here $\ol N$ represents a certain formal solution of an \emph{inhomogeneous} eigenfunction equation:
\begin{equation}\label{Nbar}
-i \partial_x \olN  - C_+( q \olN) = z \olN - z \quad\text{with $\olN(x)\to 1$ as $x\to+\infty$}
\end{equation}
and spectral parameter $z\in[0,\infty)$, which is the essential spectrum of $\mc L$.  The word formal indicates that this is not an element of the underlying Hilbert space.  The nonlocal nature of the operator $\mc L$ makes the question of the existence of such solutions a delicate matter; see \cite{MR1073870,Wu2017}.

The inhomogeneity of \eqref{Nbar} is quite unexpected from an inverse scattering point of view --- one would expect honest eigenfunctions to be the central objects.  In fact, this approach lead to the study of two families of formal eigenfunctions, traditionally denoted $N$ and $\olM$, as well as two families of solutions to \eqref{Nbar}, namely, $\olN$ and $M$.  (We caution the reader that the bar appearing here does not indicate complex conjugation.)

Even in the familiar territory of Sturm--Liouville operators, we learn a lot by moving the spectral parameter off the spectrum.  Taking this step, \cite{Kaup1998a} considers the Fredholm equation, which in our preferred notation reads
\begin{equation}\label{W eq}
W = 1 + (\mc L_0 - z)^{-1} C_+(q W), \qtq{where} z\in\C\setminus[0,\infty)
\end{equation}
and $\mc L_0$ denotes $-i\partial_x$ acting on $L^2_+(\R)$, by analogy with \eqref{L defn} with $q\equiv 0$.  This paper also observes that $W$ is analytic in $z$ and that the functions $M$ and $\olN$ mentioned earlier may be realized as the boundary values (from above and below) of $W$.

Our central object in this paper will be $m(x;\kappa,q)$, defined via 
\begin{equation}\label{m eq0}
-i m' - C_+[q(m+1)] +\kappa m =0 \qtq{or equivalently,}  m = (\mc L + \kappa)^{-1} C_+ q . 
\end{equation}
The sign change in the spectral parameter is motivated by the fact that we shall only need to consider $-z=\kappa >0$; moreover, $\kappa$ will be sufficiently large so that $\mc L + \kappa$ is indeed invertible.

In the line setting, $m$ differs little from $W$; indeed, $W=1+m$.  However, one of the virtues of $m$ is that it allows us to transition seamlessly between the line and circle geometries. 

The direct analogue of the conserved quantity mentioned in \eqref{Nbar q} is
\begin{equation}\label{beta defn}
\beta(\kappa;q) := \int q(x) m(x;\kappa,q)\, dx = \langle q_+, (\mc L +\kappa)^{-1} q_+ \rangle_{L^2_+} .
\end{equation}
The only difference is the removal of the term $\int q$, whose inclusion would curtail applicability of this to $q\in L^1$.  
In calling this quantity $\beta$, we are following Talbut~\cite{Talbut2021}, where it arises after differentiating the perturbation determinant with respect to the spectral parameter; see subsection~\ref{ss:PD}.  This use of $\beta$ is very different from the object with this name in~\cite{Kaup1998}!    

Kaup--Matsuno~\cite{Kaup1998} approached the question of polynomial conservation laws by expanding \eqref{Nbar q} in increasing powers of $z$, noting that \eqref{Nbar} gave a means of recursively generating the coefficients.  In the line geometry, one finds
\begin{equation}\label{beta as genfun}
\beta(\kappa;q) = \kappa^{-1} P(q) - \kappa^{-2} \hbo(q) + \kappa^{-3} H_2(q) + \bigO(\kappa^{-4}).
\end{equation}
On the circle, by comparison, one has
\begin{equation*}
\beta(\kappa;q) = \kappa^{-1} \Bigl(P(q)+\tfrac12\tint\Bigr) - \kappa^{-2}\Bigl( \hbo(q) - \btint P(q) -\tfrac16 \btint^3 \Bigr)
	+ \bigO(\kappa^{-3}).
\end{equation*}

A variation on this approach discussed, for example, in \cite{MR4275336,MR4148823,Sun2021} is to expand the resolvent in \eqref{beta defn} to obtain 
\begin{equation}\label{beta as geo}
\beta(\kappa;q) \sim \sum_{\ell\geq 0} (-1)^\ell \kappa^{-\ell-1} \langle q_+, \mc L^\ell q_+ \rangle ,
\end{equation}
which exhibits a very direct relationship between the Lax operator and the conservation laws of a type not seen, for example, for KdV.  In the circle setting, one may exploit the fact that $q_+=\mc L 1$ to present this formula in a different way; see \eqref{beta4}.  

While the polynomial conservation laws only make sense for very smooth initial data, we will show that their generating function $\beta(\kappa;q)$ makes sense in either geometry for $q\in H^s$ with $s>-\tfrac12$; see Proposition~\ref{P:beta}.  As we will demonstrate, this can be used to obtain $H^s$-bounds on smooth solutions, yielding a new proof of the following:

\begin{theorem}[Conservation laws, \cite{Talbut2021a}]\label{T:Talbut}
Let $q$ be a (global) $H^\infty$ solution to \eqref{BO}, either on the line or on the circle.  Then for all $-\frac12<s<0$ and $t\in \R$ we have 
\begin{align*}
\bigl(1+ \|q(0)\|_{H^s}\bigr)^{-2|s|} \|q(0)\|_{H^s}\lesssim
	\|q(t)\|_{H^s}\lesssim \bigl(1+ \|q(0)\|_{H^s}\bigr)^{\frac{2|s|}{1-2|s|}}\|q(0)\|_{H^s}.
\end{align*}
\end{theorem}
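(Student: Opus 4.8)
The plan is to derive the two-sided bound on $\|q(t)\|_{H^s}$ from the conservation of the generating function $\beta(\kappa;q)$ by choosing the spectral parameter $\kappa$ proportional to a suitable power of the frequency and then integrating over $\kappa$. The starting point is the identity $\beta(\kappa;q) = \langle q_+, (\mc L+\kappa)^{-1} q_+\rangle$ from \eqref{beta defn}, which is conserved under the \eqref{BO} flow since $\mc L(q(t))$ evolves by unitary conjugation. The key analytic input, which we take from Proposition~\ref{P:beta}, is that for $\kappa$ large (depending on $\|q\|_{H^s}$) the quantity $\beta(\kappa;q)$ is comparable to a Littlewood--Paley-type expression: morally, $\langle q_+, (\mc L+\kappa)^{-1} q_+\rangle \approx \langle q_+, (|\xi|+\kappa)^{-1} q_+\rangle$, because the potential term $C_+(q\,\cdot\,)$ in $\mc L$ is a perturbation controlled by $\|q\|_{H^s}$ relative to $\mc L_0+\kappa$ once $\kappa$ exceeds a threshold of the form $(1+\|q\|_{H^s})^{1/(1-2|s|)}$ or similar. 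This comparison is where the loss $(1+\|q(0)\|_{H^s})^{\pm}$ in the statement originates.

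Next I would relate $\|q\|_{H^s}^2 = \int (1+|\xi|^2)^s |\wh q(\xi)|^2\,d\xi$ to $\beta$. Writing $A(\kappa) := \langle q_+, (\mc L_0+\kappa)^{-1} q_+\rangle = \int_0^\infty \frac{|\wh q(\xi)|^2}{\xi+\kappa}\,d\xi$ (using that $q$ is real, so the negative frequencies contribute the complex-conjugate amount and $\|q\|_{H^s}$ sees both halves), one has the elementary Fubini identity $\int_1^\infty \kappa^{-2s-1} A(\kappa)\,d\kappa \approx \int_0^\infty \frac{|\wh q(\xi)|^2}{(1+\xi)^{-2s}\cdots}$ after evaluating the $\kappa$-integral $\int_1^\infty \kappa^{-2s-1}(\xi+\kappa)^{-1}\,d\kappa \sim (1+\xi)^{-1-2s} = (1+\xi)^{2s}$ for $-\tfrac12<s<0$; the convergence of this $\kappa$-integral at $\kappa=\infty$ is exactly what forces $s<0$, and the high-frequency behaviour is what forces $s>-\tfrac12$. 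Thus $\|q\|_{H^s}^2 \approx \int_1^\infty \kappa^{-2s-1} A(\kappa;q)\,d\kappa$ up to low-frequency corrections that one handles separately using conservation of $P$ (note $P(q) = \tfrac12\|q\|_{L^2}^2$ is conserved and controls the $|\xi|\lesssim 1$ part trivially; on the circle one must also track $\int q$, but it too is conserved).

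The argument then runs as follows: fix $t$, set $N := 1+\|q(0)\|_{H^s} = 1+\|q(t)\|_{H^s}$ is \emph{not} yet known, so instead run a continuity/bootstrap in $t$. Let $\kappa_0 := c\,(1+\|q(0)\|_{H^s})^{\theta}$ with $\theta$ chosen so that $\mc L(q(t))+\kappa$ is comparable to $\mc L_0+\kappa$ for all $\kappa\ge \kappa_0$ as long as $\|q(t)\|_{H^s} \le 2\|q(0)\|_{H^s}$. On that time interval, $\beta(\kappa;q(t)) \approx A(\kappa;q(t))$ and $\beta(\kappa;q(0)) \approx A(\kappa;q(0))$ uniformly for $\kappa\ge\kappa_0$, and conservation $\beta(\kappa;q(t)) = \beta(\kappa;q(0))$ gives $A(\kappa;q(t)) \approx A(\kappa;q(0))$ for $\kappa\ge\kappa_0$. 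Integrating $\kappa^{-2s-1}$ over $[\kappa_0,\infty)$ recovers the high-frequency part of the $H^s$ norm, while the frequencies $\lesssim \kappa_0$ are controlled by $P$; bookkeeping the constants produces precisely the powers $2|s|$ and $\tfrac{2|s|}{1-2|s|}$ stated (the asymmetry reflects that $\kappa_0$ enters once as a lower limit of integration and the comparison constant enters multiplicatively). A standard continuity argument then removes the a priori assumption $\|q(t)\|_{H^s}\le 2\|q(0)\|_{H^s}$ and upgrades the bound to all $t$.

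The main obstacle is the uniform comparison $\beta(\kappa;q) \approx A(\kappa;q)$ for $\kappa$ above a threshold depending only on $\|q\|_{H^s}$ with $s$ allowed all the way down to $-\tfrac12$: one must show that the potential term is a \emph{form-bounded} perturbation of $\mc L_0+\kappa$ with small relative bound once $\kappa \gtrsim (1+\|q\|_{H^s})^{\theta}$, and this requires a genuinely sharp commutator/paraproduct estimate handling the high-low frequency interaction (the same interaction responsible for the failure of analytic well-posedness). This is exactly the content we are permitted to quote from Proposition~\ref{P:beta}; granting it, the remainder is the Fubini computation and the continuity argument sketched above. A secondary subtlety, relevant only on $\T$, is the appearance of $\tint$ in the expansion of $\beta$: one must verify that the low-frequency terms involving $\int q$ are conserved and do not spoil the comparison, which follows since $\int q$ is a Casimir.
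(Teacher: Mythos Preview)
Your strategy is exactly the paper's: quote Proposition~\ref{P:beta} for the two-sided comparison of $\int_\kappa^\infty \vk^{2s}\beta(\vk;q)\,d\vk$ with a Sobolev-type norm, use conservation of $\beta$ under \eqref{BO}, and close a bootstrap on the norm of $q(t)$; this is precisely the content of Lemma~\ref{L:beta boot} and Corollary~\ref{C:Qstar}.

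Two corrections. First, the weight in the $\kappa$-integral is $\kappa^{2s}$, not $\kappa^{-2s-1}$: one has $\int_\kappa^\infty \vk^{2s}(\xi+\vk)^{-1}\,d\vk \simeq (\xi+\kappa)^{2s}$, and your claimed equality $(1+\xi)^{-1-2s}=(1+\xi)^{2s}$ is false unless $s=-\tfrac14$. Second, the detour through $P$ to control low frequencies is both unnecessary and troublesome: it introduces $\|q(0)\|_{L^2}$ into the final bound, which is not comparable to $\|q(0)\|_{H^s}$. The paper avoids this by running the bootstrap in the $\kappa$-dependent norm $\|\cdot\|_{H^s_\kappa}$ throughout (so that low frequencies carry the weight $\kappa^{2s}$ automatically) and only at the end converting via $\kappa^{s}\|q\|_{H^s}\le\|q\|_{H^s_\kappa}\le\|q\|_{H^s}$; the stated powers $2|s|$ and $\tfrac{2|s|}{1-2|s|}$ then drop out from $\kappa\sim(1+A)^{1/(2\eps)}$ with $2\eps=1-2|s|$.
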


This is not a verbatim recapitulation of Talbut's result: he imposes a mean-zero assumption in the circle case and formulates an inferior lower bound on $q(t)$.  Nevertheless, this result can be deduced from his arguments with only minor changes.

The argument in \cite{Talbut2021a} is based on the analysis of a renormalized perturbation determinant in a manner inspired by \cite{MR3820439}.  This object will be described in subsection~\ref{ss:PD}, where we will also discuss its relationship to $\beta(\kappa;q)$.  We will give a direct proof of Theorem~\ref{T:Talbut}, based solely on $\beta(\kappa;q)$; see Corollary~\ref{C:Qstar}.  In fact, Corollary~\ref{C:Qstar}, and Lemma~\ref{L:beta boot} on which it is based, are stronger than Theorem~\ref{T:Talbut} in two ways: they allow more general flows from the \eqref{BO} hierarchy and they demonstrate not only that solutions are bounded, but also that equicontinuous sets of initial data lead to equicontinuous ensembles of orbits.

\subsection{The method of commuting flows}\label{ss:commute}
\emph{A priori} equicontinuity results of the type with which we ended the previous subsection have been an integral part of the method of commuting flows since its inception. They have many roles.  For example, suppose we have a bounded sequence in $H^s$ that is convergent in $H^{-100}$; then this sequence converges in $H^s$ if and only if it is $H^s$-equicontinuous.  In this way, equicontinuity allows us to recover any loss of derivatives that may appear when proving that the flow depends continuously on the initial data.

The main question we need to address is this: How are we to estimate the divergence of two solutions with slightly different initial data?  One approach that has a long tradition is to interpose a regularized flow. Historically, this would typically be done via parabolic regularization, which introduces dissipation.  We will employ a Hamiltonian flow.  This will be generated by $H_\kappa$, which may be regarded as an approximation to $\hbo$.  In this way, we may rewrite the difference of the two solutions to \eqref{BO} with initial data $q^0$ and $\tilde q^{\:\!0}$ as
\begin{align}
e^{tJ\nabla \hbo}(q^0) - e^{tJ\nabla \hbo}(\tilde q^{\:\!0})  &= e^{tJ\nabla \hbo}(q^0) - e^{tJ\nabla H_\kappa}(q^0) \notag \\
	&\qquad + e^{tJ\nabla H_\kappa }(q^0) -  e^{tJ\nabla H_\kappa}(\tilde q^{\:\!0})  \label{3pieces} \\ 
	&\qquad\quad + e^{tJ\nabla H_\kappa}(\tilde q^{\:\!0})  - e^{tJ\nabla \hbo}(\tilde q^{\:\!0}). \notag
\end{align}
Here, $J$ stands for the operator $\partial_x$ of the Poisson bracket \eqref{Poisson}.

Any reasonable choice of regularized flow makes the middle term in RHS\eqref{3pieces} easy to estimate; this shifts the burden to estimating the first and last terms.  For these terms, the initial data is the same; however, the flows themselves are different.


The central principle of the method of commuting flows is to choose $H_\kappa$ to Poisson commute with $\hbo$ so that the corresponding flows commute. This commutativity allows us to write
\begin{align}\label{diff of flows}
e^{tJ\nabla \hbo}(q^0) - e^{tJ\nabla H_\kappa}(q^0) = \bigl[ e^{tJ\nabla(\hbo - H_\kappa) } - \Id\bigr]\circ e^{tJ\nabla H_\kappa}(q^0) .
\end{align}
In this way, we are led to the following problem: show that the flow generated by $\hbo - H_\kappa$ is close to the identity, while accepting that the initial data for this more complicated flow is not simply $q^0$.  Indeed, $q^0$ is `scrambled' by the $H_\kappa$ flow, for which we have little uniform control as $\kappa\to\infty$.

Prior work on other models informs where to seek inspiration for the choice of the regularized Hamiltonian $H_\kappa$, namely, from the expansion \eqref{beta as genfun} and its torus analogue.  This reasoning leads us to select
\begin{equation}\label{Hk intro}
\hk (q):= \begin{cases}  \kappa P(q) - \kappa^2 \beta(\kappa;q) & \text{on $\R$,} \\
	\bigl[ \kappa + \tint \bigr] P(q) - \kappa^2 \beta(\kappa;q)  + \tfrac{\kappa}2 \btint^2 + \tfrac16 \btint^3 & \text{on $\T$.}
	\end{cases}
\end{equation}

Although there are many facets to the full story, we would like to focus attention on \eqref{diff of flows}, how it limited Talbut's analysis to the case of $L^2$ initial data, and how we were able to overcome these obstructions.

By writing the nonlinearity as a complete derivative, we see that the vector field defining the \eqref{BO} flow is actually continuous on $L^2$, albeit $H^{-2}$-valued.  Likewise, the $H_\kappa$ Hamiltonian defines a continuous vector field on $L^2$.  In this way, we may analyze the difference flow directly as the difference of these two vector fields.  As noted above, the inevitable loss of two derivatives may be recovered by exploiting equicontinuity.  This is what Talbut does in \cite{Talbut2021}.  However, as soon as $s<0$, we may no longer make sense of $q^2$, for $q\in H^s$, even as a distribution. 

The idea of incorporating a gauge transformation into the method of commuting flows appears already in \cite{Killip2019}, although it is not always a prerequisite for obtaining sharp results; see \cite{harropgriffiths2022global}.  The big hurdle is finding the right transformation.

It is natural to try Tao's gauge \cite{Tao2004}. However, the high-low interactions that are so troublesome for his style of analysis and which this gauge removes, are of no consequence for our methodology; indeed, outermost derivatives are handled with equicontinuity.  Ultimately, we do not find this transformation helpful for  our analysis.

In previous incarnations of the method of commuting flows, it was the diagonal Green's function that played a central role.  It is elementary to verify that even when $q\equiv0$, the Green's function diverges on the diagonal; thus, renormalization is required.  In the case of \eqref{BO}, however, we found this approach to be fruitless.

Our next attempt was to employ the gauge transformation introduced by Bock and Kruskal \cite{Bock1979} in their study of conservation laws for \eqref{BO} posed on the line.  This gauge is defined implicitly via
\begin{equation}\label{BK u}
2q=  \tfrac{1}{w+\kappa} \h (w') + \h \bigl[\tfrac{w'}{w+\kappa}\bigr] +\tfrac{2\kappa w}{w+\kappa}.
\end{equation}
In subsection~\ref{ss:Bock}, we will demonstrate the existence and uniqueness of such a $w$; indeed, we will show this is possible even for $q\in H^s$ with $s>-\frac12$, and that the transformed unknown $w$ lies in $H^{s+1}$.

As noted in \cite{Bock1979}, it is not difficult to verify that \eqref{BO} may be written as
\begin{equation}\label{BK u dot} 
\tfrac{d}{dt} w = \h w'' - 2qw',
\end{equation}
which does not appear to constitute progress --- how can we hope to multiply $q\in H^s$ and $w'\in H^s$?  However, combining this with \eqref{BK u}, a little work reveals that 
\begin{equation*}
\tfrac{d}{dt} w = \h w'' + 2i C_+\bigl(w'\bigr) \cdot C_+\bigl(\tfrac{w'}{\kappa+w}\bigr) - 2i C_-\bigl(w'\bigr) \cdot C_-\bigl(\tfrac{w'}{\kappa+w}\bigr)
	 + \tfrac{2\kappa w w'}{\kappa+w} .
\end{equation*}
This was our first breakthrough on the problem! The fact that this is progress rests on a simple but fundamental observation: the product of two functions in $H^s_+$ is a well-defined distribution; see Lemma~\ref{L:+*+}.  Of course, this is not true without the frequency restriction.

Next, we must find a description of the dynamics of $w$ under the regularized Hamiltonian \eqref{Hk intro}.  Immediately, we strike new hurdles.  In past analyses employing a gauge transformation, we were lead to the regularized dynamics of the gauge variable through the biHamiltonian relation.  However, \cite{Fokas1981} shows that there is no such biHamiltonian formulation of \eqref{BO}!  On top of this, we could not find any documented relationship between $\beta(\kappa)$ and $w$, which might help derive such dynamics.  This is the important role of  Theorem~\ref{T:BK} in our story: it connects $w$ to $m$ and thence to $\beta$.

As we investigated $w$ through its connection to $m$, it soon became apparent that our treatment could be much simplified by abandoning $w$ and adopting $m$ as our new gauge.  It is striking to us that despite the long history of $m$ in the theory of \eqref{BO}, its value as a gauge transformation has been overlooked until now.

The abandonment of $w$ and adoption of $m$ as our gauge transformation accelerated us toward a proof of Theorem~\ref{t:main}, albeit not the proof presented here.  The simplicity of the arguments in this paper benefits substantially from a further innovation, namely, the Lax pair presented in Proposition~\ref{P:HK}.  We do not alter the traditional Lax operator $\mc L$, only its antisymmetric partner $\mc P$, which we call the Peter operator (Lax's first name).

Although a Lax representation of the flow generated by $\beta(\kappa)$ has appeared previously in Proposition~2.17 of \cite{Sun2021}, this would not lead one to \eqref{PkR} or \eqref{PkT}; the first term in each equation is new.  At first glance, this may seem inconsequential; however, the inclusion of these first terms makes a huge difference.  It is only these modified Peter operators that satisfy the special properties \eqref{Hk flow Pk} and \eqref{n dot BO v1}, which much simplify the proof of Theorem~\ref{t:main} in Section~\ref{S:GWP}.  Additional special properties of our Peter operators are discussed in Section~\ref{S:L&P}.


\subsection{Applications of the new Lax pair}\label{ss:applic}
Section~\ref{S:L&P} is devoted to reaping certain other rewards from our new Lax pair, not directly related to well-posedness.  Here the reader will find Theorem~\ref{T:Gerard}, which provides an extension of G\'erard's recent explicit formula \cite{Gerard2022} for \eqref{BO} to the full hierarchy, as well as Theorem~\ref{T:Virial} which describes the action of a one-parameter family of higher symmetries. 

The notion of a higher symmetry is described in subsection~\ref{ss:higher}.  It is a symmetry that lies outside the commuting flows of the hierarchy because it does \emph{not} preserve the values of the commuting Hamiltonians.  Scaling and Galilei boosts are simple examples.  We also discuss a much more profound example from \cite{Fokas1981}, for which we provide a mechanical explanation: the center of energy travels at a constant speed under every flow of the hierarchy.  One is then led to ask if there are centers associated to the other conserved quantities that also travel at constant speed.  Theorem~\ref{T:Virial} answers this in the affirmative, thereby presenting new recursion relations within the hierarchy.  As a consonant  example of the utility of our Lax pair, we present a generalization of the variance identity of \cite{Ifrim2019} to the full \eqref{BO} hierarchy.

In extending G\'erard's formula to the full hierarchy, we actually find an explicit formula for the $\tau$-function associated with \eqref{BO}.  By a $\tau$-function, we mean an expression for the solution under a general Hamiltonian.  Traditionally,
\begin{align}\label{2:17}
q(\vec t;q_0) = \Bigl[\exp\bigl\{ {\textstyle\sum} t_i J\nabla H_i \bigr\}q_0 \Bigr] ( x=0 )
\end{align} 
would be written as a logarithmic derivative of the $\tau$-function; however, such a $\tau$-function evidently contains as much information as $q(\vec t;q_0)$.  Here $H_i$ enumerate the commuting Hamiltonians of the hierarchy, while $\vec t$ denotes a vector of times (with only finitely many non-zero terms).  Note that this function is scalar-valued.  This is no loss of generality because momentum is one of the Hamiltonians, traditionally assigned index $i=0$; consequently, one may recover the value of the solution at any spatial point by using the variable $t_0$.

The relation \eqref{beta as geo} has inspired us to propose parameterizing the $\tau$-function in a different way, namely, by continuous functions $\phi$.  Just as
\begin{equation}\label{Hphi defn}
H_\phi(q) := \langle q_+, \phi(\mc L)q_+\rangle
\end{equation}
defines a conserved quantity for the hierarchy, so we may define
\begin{equation}\label{qphi defn}
q(\phi;q_0) = \bigl( e^{ J\nabla H_\phi} q_0\bigr)( x=0 ).
\end{equation}
When $\phi$ is a polynomial, this reproduces \eqref{2:17}.

In Section~\ref{S:L&P} we will prove the following formula for a dense class of functions $\phi$:
\begin{equation}\label{tau phi}
C_+\bigl( e^{ J\nabla H_\phi} q_0\bigr)( x+iy ) = \tfrac{1}{2\pi i} I_+ \Big( \big( X - t \psi(\mc L_{q_0}) - x - iy \big)^{-1} q^0_+\Big) \end{equation}
for $x\in\R$, $y>0$.  Recall that functions in the Hardy space are analytic in the upper half-plane; moreover, as $q$ is real-valued, it may be recovered from its positive-frequency part.  Here $X$ denotes the operator of multiplication by $x$ and $I_+$ denotes a kind of conditional integral; both are described in detail in Section~\ref{s:pre}.  The function $\psi$ applied to the Lax operator associated to the initial data $q_0$ is defined via 
\begin{equation}\label{psi from phi}
\psi(E) = \phi(E) +  E\phi'(E).
\end{equation}
This new algebraic relation has an important role: it reveals exactly how the explicit formula \eqref{tau phi} varies in response to changes in the Hamiltonian.

\subsection*{Acknowledgements} R.K. was supported by NSF grants DMS-1856755 and DMS-2154022;  M.V. was supported by NSF grant DMS-2054194.  The work of T.L. was also supported by these grants.

\section{Notation and preliminaries} \label{sec;preliminaries}

Our conventions for the Fourier transform are
\begin{align*}
\hat f(\xi) = \tfrac{1}{\sqrt{2\pi}} \int_\R e^{-i\xi x} f(x)\,dx  \qtq{so} f(x) = \tfrac{1}{\sqrt{2\pi}} \int_\R e^{i\xi x} \hat f(\xi)\,d\xi
\end{align*}
for functions on the line, while on the circle,
\begin{align*}
\hat f(\xi) = \int_0^1 e^{- i\xi x} f(x)\,dx \qtq{so} f(x) = \sum_{\xi\in 2\pi\Z} \hat f(\xi) e^{i\xi x}.
\end{align*}
These Fourier transforms are unitary on $L^2$ and yield the Plancherel identities
\begin{align*}
    \|f\|_{L^2(\mathbb R)}=\|\hat f\|_{L^2(\mathbb R)} \qtq{and}   \|f\|_{L^2(\mathbb T)}=\sum_{\xi\in 2\pi \mathbb Z}|\hat f(\xi)|^2.
\end{align*}

With these conventions, we define the Hilbert transform via
\begin{equation}\label{HT}
\wh{\h f}(\xi) = -i\sgn(\xi)\wh{f}(\xi)
\end{equation}
with the understanding that $\sgn(0)=0$, which is only important on the circle.

We will also employ the Cauchy--Szeg\H{o} projections defined via
\begin{equation}\label{C defn}
\wh{C_\pm f}(\xi) = 1_{[0,\infty)}(\pm\xi) \wh{f}(\xi)
\end{equation}
and often write $q_\pm = C_\pm q$.  Although $ i \h = C_+ - C_-$ in both geometries, we have
\begin{align}\label{Cid}
C_++C_-=1 \quad \text{only on the line; on the circle,} \quad C_+ f + C_- f = f + \textstyle \int \!f.
\end{align}

To avoid an unnecessary proliferation of parentheses, we adopt the following rules for the operators $C_\pm$: Their precedence is lower than multiplication indicated by juxtaposition (e.g., $fg$), but higher than multiplication indicated with a dot, addition, and subtraction.  Thus,
by our conventions,
\begin{equation*}
C_+ f \cdot C_+(\ol m +g)h + q = \bigl[C_+ f\bigr]  \bigl[C_+\bigl((\ol m+g)h\bigr)\bigr] + q .
\end{equation*}

For \(\sigma\in \R\) and \(\kappa\geq 1\) we define the Sobolev spaces $H^\sigma_\kappa(\R)$ and $H^\sigma_\kappa(\T)$ as the completion of $\mathcal S(\R)$ and $C^\infty(\T)$, respectively, with respect to the norms
\begin{equation*}
\norm{f}_{H^\sigma_\kappa(\R)}^2 = \int  (|\xi|+\kappa)^{2\sigma} |\wh{f}(\xi)|^2\,d\xi \qtq{and}
\| f\|_{H^{\sigma}_\kappa(\T)}^2 = \sum_{\xi\in 2\pi\Z} (|\xi| +\kappa)^{2\sigma} |\hat f(\xi)|^2 .
\end{equation*}
When $\kappa=1$, we simply write $H^\sigma(\R)$ and $H^\sigma(\T)$.  We write $H^\sigma_+$ for the subspace of $H^\sigma$ comprised of functions holomorphic in the upper half-plane.

Throughout the paper, we will employ the $L^2$ pairing: $\langle g, f \rangle = \int \ol g(x) f(x)\,dx$.  This informs our identification of $H^{\sigma}_\kappa$ and $H^{-\sigma}_\kappa$ as dual spaces.

For the remainder of the paper, we constrain
\begin{align}\label{seps}
s\in (-\tfrac12,0) \qtq{and define} \eps:= \tfrac12( \tfrac 12-|s|)\in (0,\tfrac14).
\end{align}
All implicit constants are permitted to depend on $s$.

As  $s+1>\frac{1}{2}$, the space $H^{s+1}_\kappa$ is an algebra in either geometry.  Indeed, we have 
\begin{equation}\label{high reg alg}
\norm{fg}_{H^{s+1}_\kappa} \lesssim \norm{f}_{H^{s+1}} \norm{g}_{H^{s+1}_\kappa} \quad\text{uniformly for $\kappa\geq 1$.}
\end{equation}
However, we will also need to handle products at considerably lower regularity; this is the topic of the next two lemmas.

\begin{lemma}\label{L:Hs mult}
The product of any $f\in H^s$ and $g\in H^{s+1}$ belongs to $H^s$; indeed,
\begin{align}\label{E:Hs mult}
\| gf \|_{H^s_\kappa} \lesssim \bigl[ \|g\|_{L^\infty} + \| g\|_{H^{1/2}} \bigr] \| f \|_{H^s_\kappa}
	\lesssim \kappa^{-2\eps}\| g\|_{H^{s+1}_\kappa} \| f \|_{H^s_\kappa},
\end{align}
uniformly for $\kappa\geq 1$.  Here $s, \eps$ are as in \eqref{seps}.
\end{lemma}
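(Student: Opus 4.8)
The plan is to prove the first inequality in \eqref{E:Hs mult} by a Littlewood--Paley decomposition, and then deduce the second inequality from the first via elementary embeddings that track the $\kappa$-dependence. For the first inequality, write $f=\sum_N f_N$ and $g=\sum_M g_M$ in dyadic Littlewood--Paley pieces (say $N,M$ ranging over $2^{\Z}$, plus the low-frequency block; on the circle the sums are finite). Split the product $gf$ into the usual three regimes: (i) high-low, where $M\lesssim N$, so the output frequency is $\sim N$; (ii) low-high, where $N\ll M$, so the output frequency is $\sim M$; (iii) high-high, where $N\sim M$, with output frequency $\lesssim N$. Since $s\in(-\tfrac12,0)$ and $s+1\in(\tfrac12,1)$, the multiplier $(|\xi|+\kappa)^{s}$ is slowly varying across a dyadic block, so in each regime I may freely pass $(|\xi|+\kappa)^s$ between the output and the high-frequency input.

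For regime (i), Bernstein gives $\|g_M\|_{L^\infty}\lesssim \|g_M\|_{L^\infty}$ trivially, and summing $\sum_{M\lesssim N}g_M$ reconstructs (a Fourier-localized piece of) $g$, so this contribution is bounded by $\|g\|_{L^\infty}\|f\|_{H^s_\kappa}$ after an $\ell^2$-orthogonality/Schur argument in the output variable $N$. For regime (ii), the output is at frequency $\sim M$, so I pick up a factor $(|\xi|+\kappa)^s\sim(M+\kappa)^s$; since $s<0$ this is $\lesssim (M+\kappa)^{1/2}\cdot(M+\kappa)^{s-1/2}$, and I bound $\|g_M\|_{L^\infty}\lesssim (M+\kappa)^{1/2}\|g_M\|_{L^2}$ is the wrong direction --- instead I use $\|P_{\ll M}f\|_{L^\infty}\lesssim\|f\|_{H^{1/2+}}$ type control, or more cleanly: in the low-high regime write the factor multiplying $g_M$ as $P_{\ll M}f$, which obeys $\|P_{\ll M}f\|_{H^s_\kappa}$... the cleaner route is to put $L^\infty$ on whichever factor has the lower frequency. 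Concretely, in regime (ii) the low factor is $f$ localized below $M$; but $f\in H^s$ with $s<0$ need not be $L^\infty$, so instead I keep $f_N$ in $L^2$, put $g_M\in L^\infty$, and use $\|g_M\|_{L^\infty}\lesssim M^{0}\|g_M\|_{\dot B^0_{\infty,1}}$ bounded by $\|g\|_{H^{1/2}}$ via $\sum_M \|g_M\|_{L^\infty}\lesssim\sum_M M^{1/2}\|g_M\|_{L^2}\cdot M^{-1/2}\cdot M^{1/2}$... the honest statement is $\|g\|_{\dot B^0_{\infty,1}}\lesssim\|g\|_{\dot H^{1/2}}$ fails by a log; the fix, standard here, is that regime (ii) only needs $g$ at frequency $\gtrsim$ the output frequency, and one sums the geometric series in $N/M$ to absorb the endpoint. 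The high-high regime (iii) is handled similarly, gaining summability from the $\lesssim N$ output localization and $s>-\tfrac12$.

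For the second inequality, I interpolate/embed: $\|g\|_{L^\infty}+\|g\|_{H^{1/2}}\lesssim \|g\|_{H^{1/2+\eps}}$ since $\tfrac12+\eps=\tfrac12+\tfrac14-\tfrac{|s|}{2}<1$ and $H^{1/2+\eps}\hookrightarrow L^\infty$ in one dimension. Then I trade regularity for a power of $\kappa$: for any $0\le\sigma\le\tau$, $\|g\|_{H^\sigma_\kappa}\le\kappa^{\sigma-\tau}\|g\|_{H^\tau_\kappa}$ because $(|\xi|+\kappa)^{\sigma-\tau}\le\kappa^{\sigma-\tau}$. Applying this with $\sigma=\tfrac12+\eps$ and $\tau=s+1=1-|s|$, the gap is $\tau-\sigma=(1-|s|)-(\tfrac12+\tfrac14-\tfrac{|s|}{2})=\tfrac14-\tfrac{|s|}{2}=2\eps$, giving exactly the claimed $\kappa^{-2\eps}$ factor. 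The main obstacle is the bookkeeping in regime (ii)/(iii): getting the clean $\|g\|_{L^\infty}+\|g\|_{H^{1/2}}$ bound (rather than an $H^{1/2+}$ bound with a log loss) requires care in how one distributes norms and sums the dyadic series, and in placing $L^\infty$ on the correct (lower-frequency) factor in each interaction so that the $s<0$ weight on $f$ is never the obstruction; once the Besov/geometric-series summation is set up correctly, everything else is routine.
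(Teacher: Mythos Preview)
Your treatment of the \emph{second} inequality contains an arithmetic error. With $\eps=\tfrac12(\tfrac12-|s|)=\tfrac14-\tfrac{|s|}{2}$ and $\sigma=\tfrac12+\eps$, $\tau=s+1=1-|s|$, the gap is
\[
\tau-\sigma=(1-|s|)-(\tfrac34-\tfrac{|s|}{2})=\tfrac14-\tfrac{|s|}{2}=\eps,\quad\text{not }2\eps.
\]
So the detour through $H^{1/2+\eps}$ yields only $\kappa^{-\eps}$. The loss is real: passing through a single intermediate Sobolev norm cannot be sharp here, because the $L^\infty$ and $H^{1/2}$ terms scale differently with $\kappa$. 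The fix is to estimate each term directly. Cauchy--Schwarz in frequency gives $\|g\|_{L^\infty}\le\|\hat g\|_{L^1}\lesssim\bigl(\int(|\xi|+\kappa)^{-2(s+1)}\,d\xi\bigr)^{1/2}\|g\|_{H^{s+1}_\kappa}\lesssim\kappa^{-2\eps}\|g\|_{H^{s+1}_\kappa}$ since $2(s+1)-1=4\eps$. For $\|g\|_{H^{1/2}}$ one checks the pointwise multiplier bound $(|\xi|+1)^{1/2}\lesssim\kappa^{-2\eps}(|\xi|+\kappa)^{s+1}$, which holds by splitting $|\xi|\lessgtr\kappa$.

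For the \emph{first} inequality, your Littlewood--Paley sketch is a legitimate strategy, but as written it does not constitute a proof: you yourself flag the logarithmic obstruction in regime (ii), gesture at a geometric-series fix without carrying it out, and leave regime (iii) entirely unspecified. In particular, placing $L^\infty$ on the low-frequency factor fails in regime (ii) precisely because $f\in H^s$ with $s<0$ need not be bounded; one really must put $L^\infty$ on $g_M$ (via Bernstein $\|g_M\|_{L^\infty}\lesssim M^{1/2}\|g_M\|_{L^2}$) and then carefully sum, and the high--high piece needs its own treatment.

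The paper avoids all of this by a cleaner route: duality. Since $s<0$, the bound $\|gf\|_{H^s_\kappa}\lesssim[\|g\|_{L^\infty}+\|g\|_{H^{1/2}}]\|f\|_{H^s_\kappa}$ is equivalent to $\|gh\|_{H^\sigma_\kappa}\lesssim[\|g\|_{L^\infty}+\|g\|_{H^{1/2}}]\|h\|_{H^\sigma_\kappa}$ with $\sigma=|s|\in(0,\tfrac12)$. This positive-regularity estimate is then proved via the Besov--Slobodeckij characterization
\[
\|h\|_{H^\sigma_\kappa}^2\sim\kappa^{2\sigma}\|h\|_{L^2}^2+\iint\frac{|h(x)-h(y)|^2}{|x-y|^{2\sigma+1}}\,dx\,dy,
\]
together with the pointwise inequality $|(gh)(x)-(gh)(y)|^2\lesssim\|g\|_{L^\infty}^2|h(x)-h(y)|^2+|h(x)||h(y)||g(x)-g(y)|^2$, H\"older, and the embedding $\dot H^\sigma\hookrightarrow L^{2/(1-2\sigma)}$. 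This sidesteps the paraproduct bookkeeping entirely and explains why the borderline $\|g\|_{H^{1/2}}$ (rather than $H^{1/2+}$) appears naturally.
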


\begin{proof}
The second inequality in \eqref{E:Hs mult} is elementary.  We focus on the first.

By duality, it suffices to verify that
\begin{align}\label{sigma mult}
\| g h \|_{H^\sigma_\kappa} \lesssim \bigl[ \|g\|_{L^\infty} + \| g\|_{H^{1/2}}  \bigr] \| h  \|_{H^\sigma_\kappa}
\end{align}
holds with $\sigma=|s|$.  In fact, \eqref{sigma mult} holds for any $\sigma\in[0,\frac12)$.  This is a special case of Theorem~II.3.2 in \cite{MR0215084}.  For completeness, we give an elementary proof of our own.

Our argument is based on the Besov--Slobodeckij characterization:
\begin{align}\label{3:18}
\| h \|_{H^\sigma_\kappa}^2 \sim  \kappa^{2\sigma}\|h\|_{L^2}^2 + \iint \frac{|h(x)-h(y)|^2}{|x-y|^{2\sigma+1}} \,dx\,dy \quad\text{for any $\sigma\in(0,1)$.}
\end{align}

It is not difficult to see that
\begin{align}\label{3:19}
|(gh)( x) - (gh)( y)|^2 \lesssim \|g\|_{L^\infty}^2 |h( x) -h( y)|^2 + |h( x)| |h( y)| |g(x) -g(y)|^2 .
\end{align}
The first summand presents no difficulty.  For the second summand we employ H\"older's inequality and then the homogeneous Sobolev embedding $\dot H^\sigma\hookrightarrow L^{2/(1-2\sigma)}$:
\begin{align*}
\iint \frac{|h( x)| |h( y)| |g(x) -g(y)|^2}{|x-y|^{2\sigma+1}} \,dx\,dy
	&\lesssim  \|h( x)h( y)\|_{L^{\frac2{1-2\sigma}}_{x,y}}
		\biggl\| \frac{|g(x) -g(y)|^2}{|x-y|^{2\sigma+1}} \biggr\|_{L^{\frac2{1+2\sigma}}_{x,y}} \\
&\lesssim \|h\|_{H^\sigma_\kappa}^2 \| g \|_{L^\infty}^{1-2\sigma} \|g\|_{H^{1/2}}^{1+2\sigma}. \qedhere
\end{align*}
\end{proof}

In general, pointwise multipliers on negative regularity spaces must have considerable positive regularity; indeed, this is evident from the duality reduction performed in this proof.  There is one important exception, namely, when both functions lie in the same Hardy space.  This observation, whose proof is quite elementary, plays a crucial role in our analysis.

\begin{lemma}\label{L:+*+}
Fix $r<0$.  Then for $f,g \in H^r_+$ we have
\begin{equation}\label{low reg alg}
\norm{fg}_{H^{2r-1}} \lesssim \norm{f}_{H^r} \norm{g}_{H^r} .
\end{equation}
\end{lemma}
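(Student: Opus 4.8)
The plan is to work entirely on the Fourier side, where the Hardy space condition becomes a support condition and the product becomes a convolution. Write $\wh f, \wh g$ supported on $[0,\infty)$, so that $\wh{fg}(\xi) = (2\pi)^{-1/2}\int_0^\xi \wh f(\eta)\wh g(\xi-\eta)\,d\eta$ on the line (with the analogous discrete convolution over $[0,\xi]\cap 2\pi\Z$ on the circle); crucially the convolution variable ranges only over the bounded interval $[0,\xi]$ because both factors need nonnegative frequency. We must bound $\int_0^\infty (\xi+\kappa)^{2(2r-1)}|\wh{fg}(\xi)|^2\,d\xi$ by $\norm f_{H^r}^2\norm g_{H^r}^2$; taking $\kappa=1$ suffices since the statement is stated with the unweighted norm, but the argument is $\kappa$-uniform anyway.

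First I would estimate, for fixed $\xi\ge 0$,
\begin{equation*}
|\wh{fg}(\xi)| \lesssim \int_0^\xi |\wh f(\eta)|\,|\wh g(\xi-\eta)|\,d\eta \le \Bigl(\int_0^\xi (\eta+1)^{2r}|\wh f(\eta)|^2\,d\eta\Bigr)^{1/2}\Bigl(\int_0^\xi (\eta+1)^{-2r}|\wh g(\xi-\eta)|^2\,d\eta\Bigr)^{1/2}
\end{equation*}
by Cauchy--Schwarz, and then observe that on $[0,\xi]$ we have $(\eta+1)^{-2r}=(\eta+1)^{2|r|}\le (\xi+1)^{2|r|}$ since $r<0$. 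Hence $|\wh{fg}(\xi)|^2 \lesssim (\xi+1)^{2|r|}\,\norm f_{H^r}^2\,\int_0^\xi (\xi-\eta+1)^{2r}|\wh g(\xi-\eta)|^2\,d\eta \le (\xi+1)^{2|r|}\,\norm f_{H^r}^2\,\norm g_{H^r}^2$. This already gives $|\wh{fg}(\xi)|^2 \lesssim (\xi+1)^{2|r|}\norm f_{H^r}^2\norm g_{H^r}^2$, and then
\begin{equation*}
\norm{fg}_{H^{2r-1}}^2 = \int_0^\infty (\xi+1)^{2(2r-1)}|\wh{fg}(\xi)|^2\,d\xi \lesssim \norm f_{H^r}^2\norm g_{H^r}^2 \int_0^\infty (\xi+1)^{4r-2+2|r|}\,d\xi.
\end{equation*}
Since $r<0$, $4r-2+2|r| = 4r-2-2r = 2r-2 < -2$, so the integral converges, and the circle case is even easier as the analogous sum over $\xi\in 2\pi\Z_{\ge 0}$ of $(\xi+1)^{2r-2}$ converges. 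That completes the proof.

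There is no real obstacle here; the only thing to be careful about is not wasting the decay. A crude split putting all of $(\eta+1)^{-2r}$ onto $g$ would force the exponent $4r-2+2|r|$, which is exactly $2r-2$ and summable/integrable precisely because $r<0$ strictly — so the hypothesis $r<0$ is used in two places (to push $(\eta+1)^{-2r}\le(\xi+1)^{-2r}$ on the interval, and to get convergence of the final integral), and both are essential. One could instead symmetrize the Cauchy--Schwarz split to distribute weights more evenly, but it is unnecessary for the stated $H^{2r-1}$ target. I would present the line case in full and remark that the circle case follows verbatim with integrals replaced by sums over $2\pi\Z$.
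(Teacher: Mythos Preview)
There is a genuine gap at the ``Hence'' step. From your Cauchy--Schwarz split together with $(\eta+1)^{-2r}\le(\xi+1)^{2|r|}$ on $[0,\xi]$, what actually follows is
\[
|\wh{fg}(\xi)|^2 \lesssim (\xi+1)^{2|r|}\,\norm f_{H^r}^2\int_0^\xi |\wh g(\xi-\eta)|^2\,d\eta,
\]
with \emph{no} weight on $\wh g$; the factor $(\xi-\eta+1)^{2r}$ you inserted has no source. Since $r<0$, that unweighted integral is an $L^2$ norm of $\wh g$ and is not controlled by $\norm g_{H^r}^2$. In fact the pointwise bound $|\wh{fg}(\xi)|^2\lesssim(\xi+1)^{2|r|}\norm f_{H^r}^2\norm g_{H^r}^2$ that you state is false: take $\wh f=\wh g=(1+\eta)^{|r|-\frac12-\delta}\mathbf 1_{[0,\infty)}$ for small $\delta>0$; then $\norm f_{H^r}<\infty$, yet $\wh{fg}(\xi)\sim\xi^{2|r|-2\delta}$ for large $\xi$, so $|\wh{fg}(\xi)|^2\sim\xi^{4|r|-4\delta}\gg\xi^{2|r|}$.

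The remedy is not to seek a pointwise bound on $\wh{fg}(\xi)$ but to bring the outer weight $(\xi+1)^{2(2r-1)}$ inside before applying Cauchy--Schwarz. On $[0,\xi]$ one has $(\xi-\eta+1)(\eta+1)\le(\xi+1)^2$, hence $(\xi+1)^{-2|r|}\le(\xi-\eta+1)^{-|r|}(\eta+1)^{-|r|}$; thus the factor $(\xi+1)^{-4|r|}$ supplies exactly the $H^r$ weights for both $\wh f$ and $\wh g$, after which Cauchy--Schwarz gives $\norm f_{H^r}\norm g_{H^r}$ and the remaining $(\xi+1)^{-2}$ is integrable. This is precisely the paper's argument.
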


\begin{proof}
We start by rewriting LHS\eqref{low reg alg} in Fourier variables:
\begin{align*}
\norm{fg}_{H^{2r-1}}^2
&= \frac{1}{2\pi} \int_0^\infty \frac{1}{(\xi+1)^{4|r|+2}} \bigg| \int_0^\xi \wh{f}(\xi-\eta) \wh{g}(\eta)\,d\eta\bigg|^2\,d\xi.
\end{align*}
	
Using that for $\eta\in [0,\xi]$ we have
\begin{equation*}
\tfrac{1}{(\xi+1)^2}\leq \tfrac{1}{\xi-\eta+1} \cdot \tfrac{1}{\eta+1},
	\end{equation*}
distributing the factors of $(\xi+1)^{4r}$ evenly between $f$ and $g$, and using Cauchy--Schwarz, we may bound
\begin{align*}
\int_0^\infty \frac{1}{(\xi+1)^{4|r|+2}}& \bigg| \int_0^{\xi} \wh{f}(\xi-\eta) \wh{g}(\eta)\,d\eta\bigg|^2\,d\xi \\
&\leq \int_0^\infty \frac{1}{(\xi+1)^2} \bigg(  \int_0^\xi \frac{ |\wh{f}(\xi-\eta)| }{ (\xi-\eta+1)^{|r|} }\, \frac{ |\wh{g}(\eta)| }{ (\eta+1)^{|r|} } \,d\eta\bigg)^2\,d\xi \\
&\leq \int_0^\infty \frac{1}{(\xi+1)^2} \norm{f}_{H^r}^2 \norm{g}_{H^r}^2\,d\xi 
\lesssim \norm{f}_{H^r}^2 \norm{g}_{H^r}^2 .\qedhere
\end{align*}
\end{proof}

\begin{definition}[Equicontinuity]\label{d:equicontinuity}
Fix $\sigma\in \R$. A bounded set \(Q\subset H^\sigma\) is said to be \emph{equicontinuous} if
\[
\limsup_{\delta\to0} \ \sup_{q\in Q} \ \sup_{|y|<\delta}\|q(\cdot +y) - q(\cdot)\|_{H^\sigma} = 0.
\]
\end{definition}

By Plancherel, equicontinuity in the spatial variable is equivalent to tightness in the Fourier variable. Specifically, a bounded set $Q\subset H^\sigma$ is equicontinuous if and only if
\begin{equation}\label{equicty 1}
\lim_{ \kappa\to\infty}\sup_{q\in Q}\int_{|\xi|\geq \kappa} |\wh{q}(\xi)|^2 (|\xi|+1)^{2\sigma}\, d\xi =0\quad\text{on $\R$}
\end{equation}
or 
\begin{equation}\label{equicty 1'}
\lim_{ \kappa\to\infty}\sup_{q\in Q}\sum_{|\xi|\geq \kappa} |\wh{q}(\xi)|^2 (|\xi|+1)^{2\sigma}=0\quad\text{on $\T$}.
\end{equation}

It is important for our arguments that we are able to transfer the equicontinuity property from classes of initial data to the corresponding orbits.  This we achieve by combining the following characterization of equicontinuity with the two-sided estimate \eqref{beta est} and the conservation of $\beta(\kappa;q)$. 

\begin{lemma}[Characterization of equicontinuity]\label{L:equi}
Let $Q$ be a bounded subset of $H^s$.  Then the following are equivalent:
\begin{enumerate}[label=(\roman*)]
\item The subset $Q$ is equicontinuous in $H^s$.
\item $\norm{q}_{H^s_\kappa} \to 0$ as $\kappa\to\infty$ uniformly for $q\in Q$.
\end{enumerate}
\end{lemma}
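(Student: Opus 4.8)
The plan is to prove the equivalence of (i) and (ii) by translating both statements into conditions on the Fourier side, using the tightness characterization of equicontinuity recorded in \eqref{equicty 1} and \eqref{equicty 1'}. Throughout, $Q$ is a \emph{bounded} subset of $H^s$, so there is a constant $M$ with $\|q\|_{H^s}\le M$ for all $q\in Q$; this uniform bound will be used freely on the low-frequency part.

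For the implication (i)$\implies$(ii), I would fix $\eta>0$ and use equicontinuity to choose $\kappa_0$ so large that, uniformly for $q\in Q$, the tail $\int_{|\xi|\ge\kappa_0}|\wh q(\xi)|^2(|\xi|+1)^{2s}\,d\xi<\eta$ (respectively the analogous sum on $\T$). Now for $\kappa\ge \kappa_0$ I split the norm $\|q\|_{H^s_\kappa}^2=\int (|\xi|+\kappa)^{2s}|\wh q(\xi)|^2\,d\xi$ into the regions $|\xi|<\kappa_0$ and $|\xi|\ge\kappa_0$. On the high-frequency region, since $s<0$ we have $(|\xi|+\kappa)^{2s}\le(|\xi|+1)^{2s}$, so this contributes at most $\eta$. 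On the low-frequency region $|\xi|<\kappa_0$ we bound $(|\xi|+\kappa)^{2s}\le \kappa^{2s}$, which together with $\|q\|_{H^s}\le M$ (using $(|\xi|+1)^{2s}\ge (\kappa_0+1)^{2s}$ there) gives a contribution $\lesssim \kappa^{2s}(\kappa_0+1)^{-2s}M^2$. Since $s<0$, this tends to $0$ as $\kappa\to\infty$, so for $\kappa$ large enough (depending on $\kappa_0$, hence on $\eta$) the whole expression is $\lesssim \eta$. As $\eta$ was arbitrary, $\|q\|_{H^s_\kappa}\to 0$ uniformly over $q\in Q$.

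For the converse (ii)$\implies$(i), I would again fix $\eta>0$. Given $\kappa\ge 1$, on the region $|\xi|\ge\kappa$ we have $(|\xi|+1)^{2s}\le (2\kappa)^{|2s|}(|\xi|+\kappa)^{2s}$ — more simply, $|\xi|+1\le 2(|\xi|+\kappa)$ when $|\xi|\ge\kappa\ge1$ is false in general, so instead I note $|\xi|+1\le |\xi|+\kappa$, and since $s<0$ this gives $(|\xi|+1)^{2s}\ge(|\xi|+\kappa)^{2s}$ — the wrong direction. The correct comparison: for $|\xi|\ge \kappa$ one has $|\xi|+\kappa\le 2|\xi|\le 2(|\xi|+1)$, whence $(|\xi|+\kappa)^{2s}\ge 2^{2s}(|\xi|+1)^{2s}$, i.e.\ $(|\xi|+1)^{2s}\le 2^{-2s}(|\xi|+\kappa)^{2s}=2^{2|s|}(|\xi|+\kappa)^{2s}$. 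Therefore
\begin{equation*}
\int_{|\xi|\ge\kappa} |\wh q(\xi)|^2 (|\xi|+1)^{2s}\,d\xi \le 2^{2|s|}\int_{|\xi|\ge\kappa}|\wh q(\xi)|^2(|\xi|+\kappa)^{2s}\,d\xi \le 2^{2|s|}\|q\|_{H^s_\kappa}^2,
\end{equation*}
and the right-hand side tends to $0$ as $\kappa\to\infty$ uniformly for $q\in Q$ by hypothesis (ii). Comparing with \eqref{equicty 1} (and \eqref{equicty 1'} on the circle) gives equicontinuity. The circle case is identical with integrals replaced by sums over $\xi\in 2\pi\Z$.

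The argument is essentially a pair of elementary Fourier-multiplier comparisons, so there is no serious obstacle; the only point requiring a little care is the low-frequency term in (i)$\implies$(ii), where one must exploit both the uniform $H^s$-bound on $Q$ and the sign $s<0$ (so that the prefactor $\kappa^{2s}$ decays) — without boundedness of $Q$ the implication would fail. One should also note at the outset that (ii) already forces $Q$ to be bounded in $H^s$ in a weak sense, but since boundedness is part of the hypothesis this is not needed.
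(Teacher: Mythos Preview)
Your proof is correct and follows essentially the same route as the paper: for (i)$\Rightarrow$(ii) you split into low and high frequencies, controlling the high part by equicontinuity and the low part by the $H^s$-bound together with the decay of $\kappa^{2s}$, and for (ii)$\Rightarrow$(i) you compare the weights $(|\xi|+1)^{2s}$ and $(|\xi|+\kappa)^{2s}$ on the region $|\xi|\ge\kappa$. The only cosmetic issue is the mid-argument self-correction in the (ii)$\Rightarrow$(i) paragraph, which you should clean up in a final version; the eventual inequality you land on is exactly what the paper uses.
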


\begin{proof}
We only consider the real-line case below; the argument on the circle is similar, with integrals being replaced by sums.

First, we show that (i) implies (ii).  Fix $\del>0$.  For $\vk\geq 1$ to be chosen later, we may bound
\begin{equation*}
\int_{\R} \frac{|\wh{q}(\xi)|^2}{(|\xi|+\kappa)^{2|s|}}\, d\xi
\lesssim \frac{\vk^{2|s|}}{\kappa^{2|s|}} \int_{\R} \frac{|\wh{q}(\xi)|^2}{(|\xi|+1)^{2|s|}}\, d\xi + \int_{|\xi|\geq \vk} \frac{|\wh{q}(\xi)|^2}{(|\xi|+1)^{2|s|}}\, d\xi .
\end{equation*}
As $Q$ is equicontinuous, we may pick $\vk=\vk(\delta)$ sufficiently large so that the second integral on the right-hand side is at most $\del$.  Then, as $Q$ is bounded in $H^s$, we may choose $\kappa$ sufficiently large so that the first term on the right-hand side is at most $\del$.  Together, this shows that the left-hand side is at most $2\del$ for all $\kappa$ sufficiently large, uniformly for $q\in Q$. As $\delta>0$ was arbitrary, this proves (ii).
	
Conversely, the inequality
\begin{equation*}
\int_{|\xi|\geq \kappa} \frac{|\wh{q}(\xi)|^2}{(|\xi|+1)^{2|s|}}\, d\xi
\lesssim \int_{\R} \frac{|\wh{q}(\xi)|^2}{(|\xi|+\kappa)^{2|s|}}\, d\xi 
\end{equation*}
shows that (ii) implies (i).
\end{proof}

\section{The Lax operator}\label{s:pre} 

In this section, we investigate the Lax operator and its mapping properties.  We begin by establishing inequalities that will allow us to prove convergence of the various resolvent expansions that arise in our analysis.

\begin{lemma}\label{L:key est}
For $s, \eps$ as in \eqref{seps}, we have
\begin{equation}
\begin{aligned}
\norm{ C_+qR_0(\kappa)C_+f }_{H^s} &\lesssim \kappa^{-2\eps} \norm{q}_{H^s} \norm{f_+}_{H^s_\kappa} ,\\
\norm{ C_+qR_0(\kappa)C_+f }_{H^s_\kappa} &\lesssim \kappa^{-2\eps} \norm{q}_{H^s_\kappa} \norm{f_+}_{H^s_\kappa} ,
\end{aligned}
\label{key est}
\end{equation}
where the implicit constants are uniform in $\kappa\geq 1$.  Moreover, 
\begin{align}\label{ke2}
\norm{ C_+\, q\,C_+\, f }_{H^{-1/2}_\kappa} 
&\lesssim \kappa^{-2\eps} \| q\|_{H^s_\kappa} \norm{f_+}_{H^{1/2}_\kappa}.
\end{align}
\end{lemma}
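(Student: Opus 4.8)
\textbf{Proof plan.}

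I would dispose of the two estimates in \eqref{key est} together by isolating the good regularity supplied by the resolvent. Set $g:=R_0(\kappa)C_+f$, where $R_0(\kappa)=(\mc L_0+\kappa)^{-1}$ acts as multiplication by $(\xi+\kappa)^{-1}$ on the positive frequencies carried by $C_+f$; since $|\xi|+\kappa=\xi+\kappa$ there, this gives the exact identity $\|g\|_{H^{s+1}_\kappa}=\|f_+\|_{H^s_\kappa}$, and in particular $g\in H^{s+1}\subset H^{1/2}\cap L^\infty$. Now $C_+qR_0(\kappa)C_+f=C_+(qg)$, and $C_+$ is bounded by $1$ on every $H^\sigma$ and $H^\sigma_\kappa$, so it remains only to bound $\|qg\|_{H^s}$ and $\|qg\|_{H^s_\kappa}$. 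Both follow at once from Lemma~\ref{L:Hs mult} applied with multiplier $g$ and rough factor $q$: the second inequality in \eqref{E:Hs mult} supplies $\|g\|_{L^\infty}+\|g\|_{H^{1/2}}\lesssim\kappa^{-2\eps}\|g\|_{H^{s+1}_\kappa}=\kappa^{-2\eps}\|f_+\|_{H^s_\kappa}$, and then its first inequality (used at $\kappa=1$ for the $H^s$ bound, and at the given $\kappa$ for the $H^s_\kappa$ bound) closes \eqref{key est}. This part is routine.

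The substance is \eqref{ke2}; by density it suffices to take $q\in\Schw(\R)$ (resp.\ $C^\infty(\T)$). Write $h:=f_+\in H^{1/2}_+$. Since $C_+(qh)$ has positive-frequency support, duality against $H^{1/2}_\kappa$ gives $\|C_+(qh)\|_{H^{-1/2}_\kappa}=\sup\{|\langle\phi,qh\rangle|:\phi\in H^{1/2}_+,\ \|\phi\|_{H^{1/2}_\kappa}\le1\}$. Using $\langle\phi,qh\rangle=\langle\phi\ol h,q\rangle$ and the $H^{|s|}_\kappa$--$H^{-|s|}_\kappa$ pairing, \eqref{ke2} is reduced to the bilinear estimate
\[
\|\phi\ol h\|_{H^{|s|}_\kappa}\lesssim\kappa^{-2\eps}\,\|\phi\|_{H^{1/2}_\kappa}\,\|h\|_{H^{1/2}_\kappa},\qquad \phi,h\in H^{1/2}_+.
\]
On the Fourier side $\wh{\phi\ol h}(\zeta)=\int_{a\ge\max(0,\zeta)}\wh\phi(a)\,\ol{\wh h(a-\zeta)}\,da$, and Cauchy--Schwarz in this integral — distributing $(a+\kappa)^{1/2}$ onto $\wh\phi$ and $(a-\zeta+\kappa)^{1/2}$ onto $\wh h$ — splits off the scalar kernel
\[
G(\zeta):=\int_{\max(0,\zeta)}^\infty\frac{da}{(a+\kappa)(a-\zeta+\kappa)}=\tfrac{1}{|\zeta|}\log\bigl(1+\tfrac{|\zeta|}{\kappa}\bigr),
\]
which obeys $G(\zeta)\lesssim_\delta\kappa^{-\delta}(|\zeta|+\kappa)^{\delta-1}$ for every $\delta>0$, the logarithm being absorbed via $\log t\lesssim_\delta t^\delta$. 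Feeding this back into $\|\phi\ol h\|_{H^{|s|}_\kappa}^2=\int(|\zeta|+\kappa)^{2|s|}|\wh{\phi\ol h}(\zeta)|^2\,d\zeta$ and interchanging the order of integration, one is left to bound
\[
\kappa^{-\delta}\iint_{a,b\ge0}(|a-b|+\kappa)^{2|s|+\delta-1}\,(a+\kappa)|\wh\phi(a)|^2\,(b+\kappa)|\wh h(b)|^2\,da\,db .
\]

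The only delicate point — and the place where the hypothesis $s>-\tfrac12$ is used — is the pointwise kernel bound $(|a-b|+\kappa)^{2|s|+\delta-1}\le\kappa^{\delta-4\eps}$. Since $4\eps=1-2|s|$, choosing $\delta\in(0,4\eps)$ makes the exponent $2|s|+\delta-1$ strictly negative, so the left side is maximized at $a=b$, giving exactly $\kappa^{2|s|+\delta-1}=\kappa^{\delta-4\eps}$. Inserting this, the double integral tensorizes into $\kappa^{\delta-4\eps}\|\phi\|_{H^{1/2}_\kappa}^2\|h\|_{H^{1/2}_\kappa}^2$, the stray factors $\kappa^{\pm\delta}$ cancel, and \eqref{ke2} follows. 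The main obstacle is thus the resonant ``high $\times$ high $\to$ low'' regime $a\approx b$ large with $\zeta$ small: this is precisely where the kernel is largest, the estimate is genuinely borderline, and it collapses at the scaling-critical value $s=-\tfrac12$, where $\eps=0$.
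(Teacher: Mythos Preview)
Your proof is correct, and it takes a genuinely different route from the paper's.

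For \eqref{key est}, the paper works directly on the Fourier side, splitting the convolution integral into the regions $\eta\geq 2\xi$ and $0\leq\eta\leq 2\xi$ and deriving pointwise weight inequalities in each. You instead observe that $g:=R_0(\kappa)f_+$ satisfies $\|g\|_{H^{s+1}_\kappa}=\|f_+\|_{H^s_\kappa}$ exactly, and then feed this into the multiplier estimate of Lemma~\ref{L:Hs mult}, which has already been proved. This is shorter and more conceptual; it makes clear that \eqref{key est} is really just Lemma~\ref{L:Hs mult} composed with the trivial smoothing of $R_0(\kappa)$. The paper's direct argument, by contrast, is self-contained and does not rely on the (nontrivial) Besov--Slobodeckij argument underlying Lemma~\ref{L:Hs mult}.

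For \eqref{ke2}, the paper again does a direct Fourier case split ($2\eta>\xi$ versus $2\eta<\xi$), treating the two regions asymmetrically via Cauchy--Schwarz and Minkowski respectively. You dualize to the bilinear estimate $\|\phi\bar h\|_{H^{|s|}_\kappa}\lesssim\kappa^{-2\eps}\|\phi\|_{H^{1/2}_\kappa}\|h\|_{H^{1/2}_\kappa}$ for $\phi,h\in H^{1/2}_+$, and then apply Cauchy--Schwarz once, isolating the explicit kernel $G(\zeta)=\tfrac{1}{|\zeta|}\log(1+|\zeta|/\kappa)$. Your approach is symmetric in the two Hardy-space inputs and makes the logarithmic borderline behaviour at $s=-\tfrac12$ completely explicit through $G$; the paper's asymmetric decomposition instead separates the high--high and high--low interactions. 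Both arguments ultimately extract the same $\kappa^{-4\eps}$ gain from the hypothesis $4\eps=1-2|s|>0$.
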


\begin{proof}
We present the details on the line; the argument on the circle is a close analogue, with integrals replaced by sums.  There is little difference between the proofs of the two estimates \eqref{key est}.  We will illustrate the argument with the former because it contains both normal and $\kappa$-modified Sobolev norms. 

In Fourier variables, we have
\begin{align*}
\norm{ C_+qR_0(\kappa)C_+f }_{H^s} ^2
&= \frac{1}{2\pi} \int_0^\infty \frac{1}{(\xi+1)^{2|s|}} \bigg| \int_0^\infty \wh{q}(\xi-\eta) \frac{\wh{f}(\eta)}{\eta+\kappa}\,d\eta \bigg|^2 d\xi  .
\end{align*}

To estimate the contribution of the region where $\eta \geq 2\xi\geq 0$, we use that 
\begin{equation*}
\frac{1}{\eta+\kappa} \lesssim \frac{1}{(\eta+\kappa)^{|s|}}\, \frac{1}{(|\xi-\eta|+1)^{|s|}}\, \frac{1}{(\xi+\kappa)^{1-2|s|}}
\end{equation*}
uniformly for $\eta \geq 2\xi\geq 0$ and $\kappa\geq 1$.  Together with Cauchy--Schwarz, this yields
\begin{align*}
\int_0^\infty & (\xi+1)^{2s} \bigg| \int_{2\xi}^\infty \wh{q}(\xi-\eta) \frac{\wh{f}(\eta)}{\eta+\kappa}\,d\eta \bigg|^2 d\xi  \\
&\lesssim \int_0^\infty \frac{(\xi+1)^{2s}}{(\xi+\kappa)^{2 - 4|s|}} \bigg( \int_{2\xi}^\infty \frac{|\wh{q}(\xi-\eta)|}{(|\xi-\eta|+1)^{|s|}}
	\frac{|\wh{f}(\eta)|}{(\eta+\kappa)^{|s|}}\,d\eta \bigg)^2 d\xi \\
&\leq \int_0^\infty \frac{d\xi}{(\xi+1)^{1-4\eps}(\xi+\kappa)^{8\eps}} \norm{q}_{H^s}^2 \norm{f_+}_{H^{s}_\kappa}^2 \\
&\lesssim \kappa^{-4\eps}\norm{q}_{H^s}^2 \norm{f_+}_{H^{s}_\kappa}^2 .
\end{align*}
In the last step we integrated separately over $\xi\in[0,\kappa]$ and $\xi\in[\kappa,\infty)$.
	
To estimate the contribution of the remaining region, $0\leq \eta \leq 2\xi$, we use that
\begin{equation*}
\frac{1}{(\xi+1)^{2|s|}} \lesssim \frac{1}{(|\xi-\eta|+1)^{2|s|}} \qtq{uniformly for}
	0\leq \eta \leq 2\xi.
\end{equation*}
Together with the Minkowski and Cauchy--Schwarz inequalities, this yields
\begin{align*}
\int_0^\infty &\frac{1}{(\xi+1)^{2|s|}} \bigg| \int_0^{2\xi} \wh{q}(\xi-\eta) \frac{\wh{f}(\eta)}{\eta+\kappa}\,d\eta \bigg|^2 d\xi  \\
&\lesssim \int_0^\infty \bigg( \int_0^\infty \frac{|\wh{q}(\xi-\eta)|}{(|\xi-\eta|+1)^{|s|}}\, 
	\frac{|\wh{f}(\eta)|}{(\eta+\kappa)^{|s|}}\, \frac{d\eta}{(\eta+\kappa)^{1-|s|}} \bigg)^2 d\xi  \\
&\leq \norm{q}_{H^s}^2 \biggl( \int_0^\infty \frac{|\wh{f}(\eta)|}{(\eta+\kappa)^{|s|}}\, \frac{d\eta}{(\eta+\kappa)^{1-|s|}} \biggr)^2 \\
&\leq \norm{q}_{H^s}^2 \norm{f_+}_{H^{s}_\kappa}^2 \int_0^\infty \frac{d\eta}{(\eta+\kappa)^{1+4\eps}} \\
&\lesssim \kappa^{-4\eps}\norm{q}_{H^s}^2 \norm{f_+}_{H^{s}_\kappa}^2 .
\end{align*}
As in the previous region, $\eps>0$ is needed for convergence of the integral.  Together with our treatment of the first region, this proves~\eqref{key est}.  

It remains to prove \eqref{ke2}.  We proceed as previously.  Observing that
$$
\kappa^\eps (\xi+\kappa)^\eps \lesssim  (|\xi-\eta|+\kappa)^{-|s|} \sqrt{\eta+\kappa}  \quad\text{uniformly for $2\eta > \xi > 0$ and $\kappa\geq 1$}
$$
and using Cauchy--Schwarz, we deduce that
\begin{align}
\int_0^\infty \bigg| \int_{\xi/2}^\infty \wh{q}(\xi-\eta)  &\wh{f}(\eta)\,d\eta \bigg|^2 \frac{d\xi}{\xi+\kappa} \notag \\
&\lesssim \int_0^\infty \frac{\kappa^{-2\eps} }{(\xi+\kappa)^{1+2\eps}}
	\bigg| \int_0^\infty \frac{|\wh{q}(\xi-\eta)| \,|\wh{f}(\eta)|}{(|\xi-\eta|+\kappa)^{|s|}} \sqrt{\eta+\kappa} \,d\eta \bigg|^2 d\xi \notag\\
&\lesssim \kappa^{-4\eps} \| q\|_{H^s_\kappa}^2 \norm{f_+}_{H^{1/2}_\kappa}^2. \label{aaa}
\end{align}
Complementing this, we have  
$$
(\xi+ \kappa)^{-1} \lesssim  (|\xi-\eta|+\kappa)^{-2|s|} (\eta+\kappa)^{2|s|-1}  \quad\text{uniformly for $0< 2\eta < \xi$ and $\kappa\geq 1$}.
$$
Consequently, by Minkowski and Cauchy--Schwarz, 
\begin{align}
\int_0^\infty \bigg| \int_0^{\xi/2} \wh{q}(\xi-\eta) & \wh{f}(\eta)\,d\eta \bigg|^2 \frac{d\xi}{\xi+\kappa} \notag \\
&\lesssim \int_0^\infty \bigg| \int_0^\infty \frac{|\wh{q}(\xi-\eta)|}{(|\xi-\eta|+\kappa)^{|s|}}
	\frac{\sqrt{\eta+\kappa} \; |\wh{f}(\eta)|}{(\eta+\kappa)^{1-|s|}} \,d\eta \bigg|^2 d\xi  \notag \\
&\lesssim \kappa^{-4\eps} \| q\|_{H^s_\kappa}^2 \norm{f_+}_{H^{1/2}_\kappa}^2. \label{bbb}
\end{align}
Combining \eqref{aaa} and \eqref{bbb} proves \eqref{ke2}.
\end{proof}

We now come to the principal purpose of this section, namely, understanding $\mc L$ as a selfadjoint operator and obtaining quantitative information on its mapping properties, as well as those of its resolvent. 

\begin{proposition}[Lax operator]\label{P:Lax}
Let $s, \eps$ be as in \eqref{seps}. Given $q\in H^s$, there is a unique selfadjoint, semi-bounded operator $\mc{L}$ associated to the quadratic form
\begin{equation*}
f \mapsto \langle f, \mc L_0 f\rangle - \int q(x) |f(x)|^2 \,dx
\end{equation*}
having form domain $H^{1/2}_+$.  This operator satisfies
\begin{equation}\label{L est}
\norm{ \mc{L}f }_{H^s} \lesssim \bigl[ 1 + \norm{q}_{H^s} \bigr]\norm{f}_{H^{s+1}} .
\end{equation}
Moreover, there is a constant $C_s\geq 1$ so that whenever
\begin{equation}\label{big k0 0}
\kappa \geq C_s \bigl( 1 + \norm{q}_{H^s_\kappa} \bigr)^{\frac1{2\eps}},
\end{equation}
the resolvent $R(\kappa;q)$ of $\mc L$ exists, maps $H^{-1/2}_+$ into $H^{1/2}_+$, and satisfies
\begin{equation}\label{R est}
\norm{ R(\kappa) f }_{H^{s+1}_\kappa} \lesssim \norm{f}_{H^{s}_\kappa}
\ \text{and}\ \ %
\bigl\| [R(\kappa)-R_0(\kappa)] f \bigr\|_{H^{s+1}_\kappa} \lesssim \kappa^{-2\eps} \norm{q}_{H^s_\kappa}\norm{f}_{H^s_\kappa} .
\end{equation}

The essential spectrum $\sess(\mc L)$ agrees with that of $\mc L_0$ and for any $f\in H^{s}_+$,
\begin{align}\label{mero QF}
z \mapsto \langle f, (\mc L+z)^{-1} f\rangle 
\end{align}
defines a meromorphic function on the region where $-z\in\C\setminus\sess(\mc L)$.
\end{proposition}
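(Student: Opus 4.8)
The plan is to build $\mc L$ via the KLMN theorem by showing the perturbation $f\mapsto -\int q|f|^2$ is form-bounded relative to $\mc L_0$ with relative bound less than one, provided $\kappa$ is large. Concretely, for $q\in H^s$ one writes $\int q|f|^2 = \langle q_+ f, f\rangle + \langle f, q_+ f\rangle$ using $q = q_+ + \overline{q_+}$ (as $q$ is real), and reduces matters to estimating $\langle f, C_+(qf)\rangle$ for $f\in H^{1/2}_+$. Applying Lemma~\ref{L:key est}, specifically \eqref{ke2}, gives $\|C_+ q C_+ f\|_{H^{-1/2}_\kappa}\lesssim \kappa^{-2\eps}\|q\|_{H^s_\kappa}\|f\|_{H^{1/2}_\kappa}$, so that $|\langle f, C_+(qf)\rangle|\lesssim \kappa^{-2\eps}\|q\|_{H^s_\kappa}\|f\|_{H^{1/2}_\kappa}^2$. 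Since $\|f\|_{H^{1/2}_\kappa}^2 \sim \langle f,(\mc L_0+\kappa)f\rangle$ on $H^{1/2}_+$, this is exactly a relative form bound of size $O(\kappa^{-2\eps}\|q\|_{H^s_\kappa})$, which is $<\tfrac12$ once \eqref{big k0 0} holds with $C_s$ chosen appropriately. KLMN then produces the unique semi-bounded selfadjoint $\mc L$ with form domain $H^{1/2}_+$, and simultaneously shows $\mc L+\kappa \geq \tfrac12(\mc L_0+\kappa)$ as forms, hence $R(\kappa)=(\mc L+\kappa)^{-1}$ exists and maps $H^{-1/2}_+\to H^{1/2}_+$.

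For the quantitative resolvent bounds \eqref{R est}, I would run a Neumann-type series: formally $R(\kappa) = R_0(\kappa)\sum_{\ell\geq 0}\bigl(C_+ q\, R_0(\kappa)\bigr)^\ell$, where $R_0(\kappa)=(\mc L_0+\kappa)^{-1}$ is multiplication by $(\xi+\kappa)^{-1}$ on the Fourier side. The operator $C_+ q R_0(\kappa)$ maps $H^s_\kappa \to H^s_\kappa$ with norm $\lesssim \kappa^{-2\eps}\|q\|_{H^s_\kappa}$ by the second estimate in \eqref{key est}; under \eqref{big k0 0} this norm is $\leq \tfrac12$, so the series converges in $\mc B(H^s_\kappa)$. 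Since $R_0(\kappa)$ gains one derivative, i.e. maps $H^s_\kappa\to H^{s+1}_\kappa$ boundedly (uniformly in $\kappa$, as $(\xi+\kappa)^{-1}\leq(\xi+\kappa)^{-1}$ trivially gives the gain in the $\kappa$-modified norm), one gets $\|R(\kappa)f\|_{H^{s+1}_\kappa}\lesssim \|f\|_{H^s_\kappa}$. For the difference bound, $R(\kappa)-R_0(\kappa) = R_0(\kappa) C_+ q R_0(\kappa)\sum_{\ell\geq 0}(C_+ q R_0(\kappa))^\ell = R_0(\kappa)C_+ q R(\kappa)$; applying first the resolvent bound just proved, then the first estimate in \eqref{key est} — wait, more carefully: write it as $R_0 C_+ q R(\kappa)f$, bound $R(\kappa)f$ in $H^{s}_\kappa$ (weaker than $H^{s+1}_\kappa$), apply \eqref{key est} to get $C_+ q R_0 (\cdots)$... the cleanest is $R(\kappa)-R_0(\kappa)=R_0(\kappa) C_+q R(\kappa)$, so $\|[R(\kappa)-R_0(\kappa)]f\|_{H^{s+1}_\kappa} = \|R_0 C_+ q R(\kappa) f\|_{H^{s+1}_\kappa}\lesssim \|C_+ q R(\kappa)f\|_{H^s_\kappa}\lesssim \kappa^{-2\eps}\|q\|_{H^s_\kappa}\|R(\kappa)f\|_{H^s_\kappa}\lesssim \kappa^{-2\eps}\|q\|_{H^s_\kappa}\|f\|_{H^s_\kappa}$, using \eqref{key est} with $R_0(\kappa)C_+$ absorbed — here one should instead apply \eqref{key est} in the form $\|C_+ q R_0(\kappa) C_+ g\|_{H^s_\kappa}\lesssim\kappa^{-2\eps}\|q\|_{H^s_\kappa}\|g\|_{H^s_\kappa}$ after writing $R(\kappa)=R_0(\kappa)+(\text{lower order})$ once more, or simply note $C_+ q R(\kappa)f = C_+q R_0(\kappa) C_+(\text{something bounded in }H^s_\kappa)$ via the series. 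Either way \eqref{key est} delivers the claimed $\kappa^{-2\eps}$ gain.

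For the bound \eqref{L est} on $\mc L$ itself, one estimates $\mc L f = -if' - C_+(qf)$ directly in $H^s$: the first term is bounded by $\|f\|_{H^{s+1}}$, and $\|C_+(qf)\|_{H^s}\lesssim \|qf\|_{H^s}\lesssim \|q\|_{H^s}\|f\|_{H^{s+1}}$ by Lemma~\ref{L:Hs mult} (with the roles of the two factors arranged so that $f\in H^{s+1}$ multiplies $q\in H^s$). Finally, for the essential spectrum: $\sess(\mc L)=\sess(\mc L_0)$ follows because the difference of resolvents $R(\kappa)-R_0(\kappa)$ is compact — indeed by the difference estimate in \eqref{R est} it maps $H^s_\kappa$ into $H^{s+1}_\kappa$, and the inclusion $H^{s+1}_\kappa\hookrightarrow H^s_\kappa$ is not compact on the full line, so one needs a bit more: approximate $q$ by Schwartz functions, for which $C_+ q R_0(\kappa)$ is genuinely compact (convolution-type operator with decaying symbol), and pass to the limit using the operator-norm bound from \eqref{key est} to conclude $R(\kappa)-R_0(\kappa)$ is a norm limit of compacts, hence compact; Weyl's theorem then gives the claim. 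The meromorphy of $z\mapsto\langle f,(\mc L+z)^{-1}f\rangle$ on $\{-z\in\C\setminus\sess(\mc L)\}$ is standard once $\mc L$ is known to be selfadjoint with the stated essential spectrum: the resolvent is analytic off the spectrum, and across the discrete spectrum it has only poles, with residues given by finite-rank spectral projections; pairing against a fixed $f\in H^s_+\subset H^{-1/2}_+$ (note $H^s_+\hookrightarrow H^{-1/2}_+$ since $s>-\tfrac12$) keeps everything finite and meromorphic. The main obstacle I anticipate is the compactness argument for the essential spectrum — getting genuine compactness of $R(\kappa)-R_0(\kappa)$ rather than mere boundedness requires the approximation step, since no Rellich-type compact embedding is available on $\R$; the form-bound/KLMN and Neumann-series steps are routine given Lemma~\ref{L:key est}.
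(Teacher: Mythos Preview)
Your proposal is essentially correct and follows the paper's proof closely: KLMN via the form bound from \eqref{ke2}, the Neumann series for the resolvent bounds \eqref{R est} using \eqref{key est}, the bound \eqref{L est} via Lemma~\ref{L:Hs mult}, and the meromorphy from the spectral theorem are all exactly what the paper does.

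The one place you diverge is the compactness of $R(\kappa)-R_0(\kappa)$. You propose approximating $q$ by Schwartz functions and passing to the limit in operator norm; the paper instead factorizes
\[
R(\kappa)-R_0(\kappa) = \bigl[R_0(\kappa)C_+ q\sqrt{R_0(\kappa)}\bigr]\cdot\bigl[\sqrt{R_0(\kappa)}(1+C_+ q R(\kappa))\bigr]
\]
and verifies the first factor is Hilbert--Schmidt by direct computation of $\int\!\!\int \frac{|\hat q(\xi-\eta)|^2}{(\eta+\kappa)(\xi+\kappa)^2}\,d\eta\,d\xi\lesssim\kappa^{-1}\|q\|_{H^{-1/2}_\kappa}^2$, while the second factor is bounded on $L^2$. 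This is cleaner: it works directly for $q\in H^s$ without approximation. Your route is also valid, but be careful that \eqref{key est} only gives $H^s_\kappa\to H^s_\kappa$ convergence, whereas Weyl's theorem wants compactness on $L^2_+$; you would need to invoke \eqref{ke2} together with $R(\kappa):L^2_+\to H^{1/2}_+$ and $R_0(\kappa):H^{-1/2}_+\to L^2_+$ to get the operator-norm convergence on $L^2_+$.
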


\begin{proof}
For $f\in H^{1/2}_+$, the estimate~\eqref{ke2} shows
\begin{align}\label{q vs L0}
\bigl| \langle f, qf\rangle \bigr|
	\lesssim \kappa^{-2\eps} \norm{q}_{H^s_\kappa} \|f\|_{H^{1/2}_\kappa}^2
	= \kappa^{-2\eps} \norm{q}_{H^s}  \langle f, (\mc L_0 + \kappa) f \rangle.
\end{align}
By choosing $\kappa$ large, we see that the potential $q$ is an infinitesimally form-bounded perturbation of the operator $\mc L_0$.  Therefore the existence and uniqueness of $\mc{L}$ follows from~\cite[Th.~X.17]{MR0493420}.  The operator so defined automatically maps the form domain $H^{1/2}_+$ into its dual space $H^{-1/2}_+$.  (It will not be important for us to discuss the operator domain of $\mc L$.)  The estimate \eqref{L est}  follows directly from \eqref{E:Hs mult}.

By virtue of Lemma~\ref{L:key est}, there is a choice of $C_s\geq 1$ so that \eqref{big k0 0} ensures
\begin{align}\label{big k0 0'}
 \| C_+qR_0(\kappa) C_+\|_{H^s_{\kappa} \to H^s_{\kappa}} <\tfrac12 \qtq{and} \| C_+qR_0(\kappa) \|_{H^{-1/2}_+\to H^{-1/2}_+} <\tfrac12 .
\end{align}
This in turn guarantees the convergence of the resolvent series 
\begin{equation}\label{R series}
R(\kappa;q) = (\mc{L}+\kappa)^{-1} =  R_0(\kappa)\sum_{\ell\geq 0}\bigl[C_+qR_0(\kappa)\bigr]^\ell, 
\end{equation}
both as an operator from $H^s_\kappa$ to $H^{s+1}_\kappa$ and as an operator from $H^{-\frac12}_+$ to $H^{\frac12}_+$.  This also proves both claims in \eqref{R est}.

To show that $\sess(\mc L)=\sess(\mc L_0)$, we need only demonstrate that $R(\kappa)-R_0(\kappa)$ is a compact operator for some $\kappa>0$; see \cite[Th. XIII.14]{MR0493421}.  For this purpose, we write
\begin{equation}\label{root R naughty}
R(\kappa)-R_0(\kappa) = R_0(\kappa) C_+ q \sqrt{R_0(\kappa)} \cdot \sqrt{R_0(\kappa)} \bigl[ 1 + C_+ q R(\kappa) \bigr].
\end{equation}
It is easy to verify that the first factor in this expansion is compact by computing its Hilbert--Schmidt norm.  On the line, for example,
$$
\bigl\| R_0(\kappa) C_+ q \sqrt{R_0(\kappa)}\bigr\|_{\textrm{HS}}^2
	= \frac{1}{2\pi} \int_0^\infty\!\!\int_0^\infty  \frac{|\widehat q(\xi-\eta)|^2\,d\eta\,d\xi}{(\eta+\kappa)(\xi+\kappa)^2}
	\lesssim \kappa^{-1} \| q \|_{H^{-1/2}_\kappa}^2 . 
$$ 
Boundedness on $L^2$ of the second factor on RHS\eqref{root R naughty}, for $\kappa$ sufficiently large, follows from \eqref{R est} and \eqref{E:Hs mult}.

The spectral theorem already guarantees that the mapping defined in \eqref{mero QF} is meromorphic off the essential spectrum provided that the vector $f$ belongs to the quadratic form domain of the resolvent, which is to say, the dual of the quadratic form domain.  In this way, we see that the argument could be expanded beyond $f\in H^s_+$ to $f\in H^{-1/2}_+$. 
\end{proof}

Clearly, \eqref{big k0 0} is implied by the simpler condition
\begin{equation}\label{big k0 0000}
q\in \BA:=\{ \text{real-valued }q\in H^s  : \norm{q}_{H^s} \leq A \} \qtq{and} \kappa \geq C_s \bigl( 1 + A \bigr)^{\frac1{2\eps}}.
\end{equation}
However, we will need to continue with the more complicated formulation in order to close a bootstrap argument in the proof of Lemma~\ref{L:beta boot}.

The conditions \eqref{big k0 0} and \eqref{big k0 0000} guarantee the constructive invertibility of $\mc L+\kappa$ via the series \eqref{R series}.   In this regard, they cannot be substantially improved; this can be easily seen by considering the family of solitons \eqref{soliton Qc}.  Indeed, when $q=Q_c$, the operator $\mc L$ has an eigenvalue at $-c/2$ with eigenvector $(cx+i)^{-1}$.  By comparison, the $H^s_c$ norm of $Q_c$ is comparable to $c^{2\eps}$.

Our next lemma will be needed for the proof of Lemma~\ref{L:best n dot}.

\begin{lemma}\label{L:id0}
For $f,g\in H^{s+1}_+$ we have
\begin{equation}\label{L id}
C_+\big( f\,\ol{\mc{L}g} - \ol{g}\,\mc{L}f \big) = iC_+\big(f\ol{g}\big)' + f[1-C_-](q_+\ol{g}) .
\end{equation}
\end{lemma}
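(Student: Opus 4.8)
The plan is to verify the identity \eqref{L id} by direct computation, starting from the definition \eqref{L defn} of $\mc L$ and being careful about which terms survive projection by $C_+$. Since $f,g\in H^{s+1}_+$ with $s+1>\tfrac12$, the space $H^{s+1}_+$ is an algebra (see \eqref{high reg alg}), so all the products appearing below — $f\ol g$, $q_+\ol g$, $qf$, $q\ol g$, and their various projections multiplied by $f$ or $\ol g$ — are genuine functions (indeed in $H^{s+1}$ or at least $H^s$), and the manipulations are justified pointwise in Fourier. In particular $\mc L f = -if' - C_+(qf)$ makes sense as an element of $H^s$, and similarly $\mc L g$.

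First I would expand the left-hand side. Write
\begin{align*}
f\,\ol{\mc L g} - \ol g\,\mc L f
	&= f\bigl( \overline{-ig'} - \overline{C_+(qg)}\bigr) - \ol g\bigl( -if' - C_+(qf)\bigr) \\
	&= i\bigl( f\,\ol{g'} + \ol g\, f'\bigr) - f\,\ol{C_+(qg)} + \ol g\, C_+(qf) \\
	&= i\bigl(f\ol g\bigr)' - f\,C_-(q\ol g) + \ol g\, C_+(qf),
\end{align*}
using that $q$ is real-valued so $\overline{C_+(qg)} = C_-(\overline{qg}) = C_-(q\ol g)$, and that $\overline{g'}=(\ol g)'$. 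Now apply $C_+$ to both sides. The first term contributes $iC_+(f\ol g)'$, which is exactly the first term on the right-hand side of \eqref{L id}. It therefore remains to show
\begin{equation*}
C_+\bigl( \ol g\, C_+(qf) - f\,C_-(q\ol g)\bigr) = f[1-C_-](q_+\ol g).
\end{equation*}

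For this last identity I would pass to Fourier variables and exploit the frequency supports. The key structural fact is that $f$ and $\ol g$ have Fourier transforms supported on $[0,\infty)$ and $(-\infty,0]$ respectively, so a product like $f\cdot(\text{negative-frequency function})$ can have frequencies of either sign, while $\ol g\cdot(\text{positive-frequency function})$ likewise straddles zero; the projections $C_\pm$ then act by truncation. Concretely, write $qf = C_+(qf) + C_-(qf)$ and $q\ol g = C_+(q\ol g) + C_-(q\ol g)$, and note $\ol g\,qf$ and $f\,q\ol g$ differ by nothing relevant after suitable reorganization — indeed the natural move is to observe that $C_+(\ol g\,qf) - C_+(f\,C_-(q\ol g))$ can be rewritten by inserting $qf = q_+ f + q_- f$ (splitting $q$ itself, where $q_\pm = C_\pm q$) and tracking which pieces land in the range of $C_+$ when hit with the outer projection; on the circle one must additionally carry along the correction term $\int q$ from \eqref{Cid}, though here it cancels because $f,\ol g$ and all intermediate products are arranged so that the zero mode is handled consistently. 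After the dust settles, the surviving contribution is precisely $f q_+\ol g$ minus its negative-frequency part, i.e. $f[1-C_-](q_+\ol g)$.

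The main obstacle I anticipate is purely bookkeeping: correctly matching up the projection truncations so that exactly the combination $[1-C_-](q_+\ol g)$ — rather than, say, $C_+(q_+\ol g)$, which differs by the zero mode on the circle — emerges, and making sure the same formula is valid in both geometries given the discrepancy in \eqref{Cid}. There is no analytic difficulty here; the algebra domain $H^{s+1}_+$ is comfortably in the regime where all products are classical, so the whole argument is an identity between honest functions and the only care needed is in the Fourier-support combinatorics of the Cauchy--Szeg\H o projections.
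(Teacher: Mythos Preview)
Your proposal is correct and follows the paper's approach exactly: expand via \eqref{L defn}, extract $i(f\ol g)'$, and reduce to Fourier-support bookkeeping on $C_+\{\ol g\,C_+(qf) - f\,C_-(q\ol g)\}$. The paper dispatches the bookkeeping you flag as the main obstacle in three clean moves---replace $\ol g\,C_+(qf)$ by $\ol g\,qf$ under the outer $C_+$ (since $\ol g\cdot[1-C_+](qf)$ has only negative frequencies), rewrite $\ol g\,qf - fC_-(q\ol g) = f[1-C_-](q\ol g)$ and drop the outer $C_+$ (both factors holomorphic), then replace $q$ by $q_+$ (since $q_-\ol g$ is anti-holomorphic and hence annihilated by $1-C_-$)---and this works uniformly in both geometries without any separate tracking of zero modes.
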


\begin{proof}
We compute
\begin{align*}
C_+\big( f\,\ol{\mc{L}g} - \ol{g}\,\mc{L}f \big) 
&= C_+\big\{ if\ol{g}' - fC_-(q\ol{g}) + if'\ol{g} + \ol{g}C_+(qf) \big\} \\
&= iC_+\big(f\ol{g}\big)'  + C_+\bigl\{ \ol{g}qf - fC_-(q\ol{g})  \big\}\\
&= iC_+\big(f\ol{g}\big)'  + C_+\bigl\{ f [1-C_-] (q\ol{g}) \big\}\\
&=  iC_+\big(f\ol{g}\big)' + f [1-C_-] (q\ol{g}).
\end{align*}
Finally, noting that the presence of $[1-C_-]$ allows us to replace $q$ by $q_+$ in the last term, we obtain \eqref{L id}.
\end{proof}

The remainder of this section concerns the interaction between the Lax operator $\mc L$ and the operator of multiplication by $x$.  To do this, we must first describe how multiplication by $x$ can be interpreted as an operator on the Hardy space $L^2_+(\R)$.  It cannot be realized as a selfadjoint operator!  

In order to make sense of multiplication by $x$ on $L^2_+(\R)$, it is easiest to employ Fourier transformation and the theory of semigroups.  We wish to make sense of $i\partial_\xi$ as an operator on a half-line.  The naturally associated semigroups $e^{t\partial}$ and  $e^{-t\partial}$ represent translation to the left (with truncation to $[0,\infty)$) and translation to the right (padded with zero), respectively.  Each gives rise to a strongly continuous semigroup and we may then define multiplication by $x$ as the associated generator.

We adopt the left shift as the basis for our notion of multiplication by $x$ since this leads to an operator with larger domain.  We record here some basic results of the general theory presented, for example, in \cite[\S X.8]{MR0493420}:

\begin{lemma}\label{L:X}
Let $X$ denote the (unbounded) operator on $L^2_+(\R)$ with
$$
D(X) = \bigl\{ f\in H^s_+ (\R): \widehat f\in H^1\bigl([0,\infty)\bigr) \bigr\} \qtq{and} \widehat{Xf}(\xi) =i\tfrac{d{\widehat f}}{d\xi}(\xi)
	\qtq{for} f\in D(X).
$$
Then $iX$ is maximally accretive and is the generator of the semigroup
$$
e^{-itX} f = \tfrac{1}{\sqrt{2\pi}} \int_0^\infty e^{i\xi x} \widehat f(\xi + t)\,d\xi = C_+\bigl( e^{-itx} f \bigr) 
$$
defined on $L^2_+(\R)$.   The spectrum of $X$ consists of the closed lower half-plane.  For $\Im z>0$, the resolvent is given by
$$
(X-z)^{-1} f = \tfrac{f(x)-f(z)}{x-z}
$$
where $f(z)$ is defined via analytic continuation to the upper half-plane.
\end{lemma}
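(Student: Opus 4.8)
The plan is to verify each assertion by transferring everything to the Fourier side, where $X$ becomes $i\tfrac{d}{d\xi}$ acting on $L^2[0,\infty)$, and then invoke the standard Hille--Yosida / Lumer--Phillips machinery from \cite[\S X.8]{MR0493420}. First I would record the obvious computation that on the Fourier side the candidate semigroup $e^{-itX}f$ corresponds to $(\tau_t\widehat f)(\xi)=\widehat f(\xi+t)$ for $t\ge0$, i.e.\ left translation truncated to $[0,\infty)$; this is manifestly a $C_0$-semigroup of contractions on $L^2[0,\infty)$ by dominated convergence and Plancherel. Its generator is, by definition, the operator $Af=\lim_{t\downarrow0}t^{-1}(e^{-itX}f-f)$ on the domain where this strong limit exists; a routine argument (approximate $\widehat f$ by $C_\infty_c(0,\infty)$ functions, use that the difference quotients of translation converge in $L^2$ precisely on $H^1$) identifies this domain with $\{f:\widehat f\in H^1([0,\infty))\}$ and the generator with $i\tfrac{d}{d\xi}$, i.e.\ with $X$ as defined. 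Note there is no boundary condition at $\xi=0$: the truncated \emph{left} shift does not see the endpoint, which is exactly why this choice gives the larger domain, as remarked in the text. The identity $e^{-itX}f=C_+(e^{-itx}f)$ is then just the statement that multiplying by $e^{-itx}$ shifts the Fourier support by $-t$, followed by reprojection onto $[0,\infty)$.

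Next I would establish that $iX$ is maximally accretive. Accretivity is immediate: $\mathrm{Re}\,\langle f,iXf\rangle=\mathrm{Re}\,\langle\widehat f,i\cdot i\widehat f{}'\rangle=-\mathrm{Re}\langle\widehat f,\widehat f{}'\rangle=-\tfrac12\int_0^\infty\tfrac{d}{d\xi}|\widehat f|^2\,d\xi=\tfrac12|\widehat f(0)|^2\ge0$, the boundary term at $+\infty$ vanishing since $\widehat f\in H^1([0,\infty))\subset C_0$. Maximality follows because the semigroup we exhibited is a contraction semigroup generated by $-iX$, so by Hille--Yosida $-iX$ generates a contraction semigroup, equivalently $iX$ is $m$-accretive; alternatively one checks directly that $iX+\lambda$ is surjective for $\lambda>0$ by solving the ODE $i\widehat f{}'+\lambda\widehat f=\widehat g$ with the decaying solution $\widehat f(\xi)=-i\int_\xi^\infty e^{-i\lambda(\eta-\xi)/i}\cdots$ — more cleanly, $\widehat f(\xi)=\int_0^\infty e^{i\lambda t}\widehat g(\xi+t)\,dt$, which lies in $L^2$ with $\|\widehat f\|\le\lambda^{-1}\|\widehat g\|$. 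For the spectrum: since $iX$ is $m$-accretive, $\sigma(iX)\subset\{\mathrm{Re}\ge0\}$, so $\sigma(X)\subset\{\Im\le0\}$; and one verifies every point of the closed lower half-plane is in $\sigma(X)$ by exhibiting approximate eigenfunctions ($\widehat f$ a bump translated out to $+\infty$ for real spectrum, and an explicit $L^2$ eigenfunction $\widehat f(\xi)=e^{-iz\xi}$ for $\Im z<0$, which decays since $\Im(-iz\xi)=-\xi\,\mathrm{Re}(z)$... here one should instead note $e^{-iz\xi}\in L^2[0,\infty)$ iff $\Im(-iz)<0$ iff $\mathrm{Re}\,z>0$ — so for $\Im z<0$ one uses that the adjoint $X^*$, the right-shift generator, has eigenfunctions there, forcing $z\in\sigma(X)$). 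Finally, for $\Im z>0$ the resolvent formula $(X-z)^{-1}f=\tfrac{f(x)-f(z)}{x-z}$ is checked by a direct computation: for $f\in D(X)$ with analytic extension $f(z)=\tfrac1{\sqrt{2\pi}}\int_0^\infty e^{iz\xi}\widehat f(\xi)\,d\xi$, apply $X-z$ to $\tfrac{f(x)-f(z)}{x-z}$ using the product/quotient rule on the Fourier side, or more simply verify that $g(x):=\tfrac{f(x)-f(z)}{x-z}$ has $\widehat g(\xi)=i\int_0^\xi e^{-iz(\xi-\eta)}\widehat f(\eta)\,d\eta$... the cleanest route is $(X-z)^{-1}=-i\int_0^\infty e^{-itz... }$; I would just confirm that $(X-z)g=f$ holds on Fourier side and that $g\in L^2_+$ with the right bound, which pins down the resolvent since $z\in\rho(X)$.

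The only genuinely delicate point is bookkeeping around the Hardy-space/distributional subtleties: the stated domain is phrased as a subset of $H^s_+(\R)$ rather than $L^2_+$, and the "analytic continuation" $f(z)$ must be justified as well-defined for every $f\in D(X)$ (it is: $\widehat f\in H^1([0,\infty))\subset L^1\cap L^2$, so $\int_0^\infty e^{iz\xi}\widehat f(\xi)\,d\xi$ converges absolutely for $\Im z\ge0$ and is continuous up to the boundary, agreeing with $f(x)$ there). Everything else is a routine transcription of the general semigroup theory cited from \cite[\S X.8]{MR0493420}, so the proof should in fact be extremely short — essentially a pointer to that reference together with the two one-line computations (the accretivity boundary term and the resolvent check), which is presumably why the authors simply "record" the lemma.
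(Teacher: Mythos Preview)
Your proposal is correct and aligns with the paper's treatment: the paper does not supply a proof at all, instead simply recording the lemma as ``basic results of the general theory presented, for example, in \cite[\S X.8]{MR0493420}.'' Your sketch fleshes out exactly that reference---identifying the left-shift semigroup on the Fourier side, computing the accretivity boundary term, and checking the resolvent formula---and your closing remark that the proof ``should in fact be extremely short --- essentially a pointer to that reference'' is precisely what the authors did.
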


Each $z$ with $\Im z<0$ is actually an eigenvalue of $X$ with eigenvector $1/(x-z)$.

The adjoint $X^*$ of $X$ is the generator of right translations. Its domain is smaller, being comprised of those $f\in L^2_+$ such that $\wh f\in H^1_0([0,\infty))$.  For such $f$, we have $X^*f=Xf$.

Functions in the domain of $X^*$ are absolutely integrable and integrate to zero.  Typical functions in $D(X)$ are not absolutely integrable: their Fourier transform has a jump discontinuity at the origin.  Nevertheless, they are `conditionally integrable' with a value representing half the height of the jump.  For example, using the Poisson integral formula, we have
\begin{align}\label{I+ defn0}
\lim_{y\to\infty} \pi y f(iy) =  \lim_{y\to\infty}  \int \frac{y^2}{x^2+y^2} f(x) \,dx = \lim_{\xi\downarrow 0} \tfrac{\sqrt{2\pi}}{2}\widehat f(\xi) 
\end{align}
for all $f\in D(X)$.  Following earlier models, such as \cite{Gerard2022,Sun2021}, we define a linear functional representing twice this value: For $f\in D(X)$,
\begin{align}\label{I+ defn}
I_+(f) := \lim_{y\to\infty} 2\pi y f(iy) =  \lim_{y\to\infty} \bigl\langle \chi_y, f \bigr\rangle
	= \lim_{\xi\downarrow 0} \sqrt{{2\pi}} \widehat f(\xi) \ \text{with}\  \chi_y(x) = \tfrac{iy}{x+iy}.
\end{align}
One may regard the middle expression in \eqref{I+ defn} as originating from splitting the Poisson kernel into its Hardy-space components, or as simply the Cauchy integral formula.

Another form of the Cauchy integral formula, which follows from the above, is
\begin{align}\label{CIT}
f(z)  = \tfrac{1}{2\pi i} I_+\bigl( (X-z)^{-1} f \bigr) = \lim_{y\to\infty} \tfrac{1}{2\pi i} \bigl\langle \chi_y, (X-z)^{-1} f \bigr\rangle
\end{align}
valid for all $f\in L^2_+$ and $\Im z>0$.

\begin{lemma}
If $q\in H^{\infty}(\R)$ and $f\in D(X)$, then $C_+(qf)\in D(X)$,
\begin{equation}\label{comm 1}
[X,C_+q] f = \tfrac{i}{2\pi} q_+ I_+(f),  \qtq{and} [X,\mc L ] f = i - \tfrac{i}{2\pi} q_+ I_+(f)  .
\end{equation}
\end{lemma}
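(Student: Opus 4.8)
The plan is to work entirely on the Fourier side, where $X$ acts as $i\partial_\xi$ and $C_+q$ acts as a ``one-sided'' convolution: $\widehat{C_+(qf)}(\xi) = 1_{[0,\infty)}(\xi)\int_0^\infty \widehat q(\xi-\eta)\widehat f(\eta)\,d\eta$. First I would verify the regularity claim that $C_+(qf)\in D(X)$ when $q\in H^\infty$ and $f\in D(X)$: since $\widehat f\in H^1([0,\infty))$ and $\widehat q$ is Schwartz, the convolution $\int_0^\infty \widehat q(\xi-\eta)\widehat f(\eta)\,d\eta$ is smooth in $\xi$ on $(0,\infty)$ with an $H^1$ tail, and at $\xi=0$ it has a jump of height $\widehat{q_+}(0)\cdot$(something); the point is that after truncation by $1_{[0,\infty)}$ the result still lies in $H^1([0,\infty))$ because a single jump discontinuity at the endpoint is permitted in $H^1([0,\infty))$ (unlike $H^1_0$). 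One must also check $C_+(qf)\in H^s_+$, which is immediate from Lemma~\ref{L:Hs mult} (or directly, since $q$ is smooth).

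Next, for the commutator identity \eqref{comm 1}, I would compute $\widehat{[X,C_+q]f}(\xi)$ for $\xi>0$ directly. We have $\widehat{XC_+(qf)}(\xi) = i\tfrac{d}{d\xi}\int_0^\infty \widehat q(\xi-\eta)\widehat f(\eta)\,d\eta = i\int_0^\infty \widehat q\,'(\xi-\eta)\widehat f(\eta)\,d\eta$, while $\widehat{C_+ q (Xf)}(\xi) = \int_0^\infty \widehat q(\xi-\eta)\cdot i\widehat f\,'(\eta)\,d\eta$. Integrating by parts in the second expression in $\eta$ over $[0,\infty)$, the $\partial_\eta$ hits $\widehat q(\xi-\eta)$ producing $-\widehat q\,'(\xi-\eta)$, which after the sign flip matches the first term; the only leftover is the boundary term at $\eta=0$, namely $-i\,\widehat q(\xi)\widehat f(0^+)$ (the term at $\eta=\infty$ vanishes by decay). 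Recalling from \eqref{I+ defn} that $I_+(f)=\sqrt{2\pi}\,\widehat f(0^+)$ and that $\widehat{q_+}(\xi)=\widehat q(\xi)$ for $\xi>0$, we get $\widehat{[X,C_+q]f}(\xi) = \tfrac{i}{\sqrt{2\pi}}\widehat q(\xi)\cdot\tfrac{1}{\sqrt{2\pi}}I_+(f)\cdot\sqrt{2\pi}$... here I would carefully track the $2\pi$ normalization constants so that the Fourier-side boundary term $i\widehat q(\xi)\widehat f(0^+)$ becomes exactly $\widehat{\tfrac{i}{2\pi}q_+ I_+(f)}(\xi)$, which is what \eqref{comm 1} asserts. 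One subtlety: the sign. Since $[X,C_+q]f = XC_+(qf) - C_+(qXf)$ and the boundary term arose with a minus sign inside the integration by parts but then gets flipped again when comparing against $XC_+(qf)$, I need to confirm it lands as $+\tfrac{i}{2\pi}q_+I_+(f)$ rather than its negative.

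For the second identity, recall $\mc L f = \mc L_0 f - C_+(qf) = -if' - C_+(qf)$. Then $[X,\mc L]f = [X,\mc L_0]f - [X,C_+q]f$. The first commutator is standard: on the full line $[X,-i\partial] = [x,-i\partial] = i$ as a multiplication operator (constant $i$), and this carries over to $L^2_+$ modulo the care needed because $X$ is only the ``truncated'' version — but since $\mc L_0 = -i\partial$ preserves $D(X)$ and the Fourier-side computation $i\tfrac{d}{d\xi}(\xi\widehat f(\xi)) - \xi\cdot i\widehat f\,'(\xi) = i\widehat f(\xi)$ holds with no boundary term (there is no truncation issue here since $-i\partial$ is just multiplication by $\xi$ on the Fourier side, and $\xi\widehat f(\xi)$ has no extra jump at $0$), we get $[X,\mc L_0]f = if = i$ (as the constant function, interpreting $i$ as $i\cdot 1$). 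Combining with the first identity gives $[X,\mc L]f = i - \tfrac{i}{2\pi}q_+ I_+(f)$, as claimed.

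\textbf{Main obstacle.} The genuinely delicate point is \emph{not} the algebra but the domain/regularity bookkeeping at the endpoint $\xi=0$: one must be scrupulous that (a) $C_+(qf)$ really lands in $D(X)$ and not merely in a larger space, so that all the operators in \eqref{comm 1} are applied to vectors in their domains and the identity is an honest $L^2_+$ identity; and (b) the integration-by-parts boundary term at $\eta=0$ is correctly identified with $I_+(f)$ including the precise power of $2\pi$ dictated by the line-case Fourier normalization $\widehat f(\xi) = \tfrac{1}{\sqrt{2\pi}}\int e^{-i\xi x}f(x)\,dx$. I would double-check the normalization by testing on the explicit example $f(x) = \tfrac{iy}{x+iy} = \chi_y(x)$ (or $f = 1/(x-z)$ with $\Im z>0$), for which $I_+(f)$ is known from Lemma~\ref{L:X} and \eqref{I+ defn}, and verifying both sides of \eqref{comm 1} agree.
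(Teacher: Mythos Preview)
Your approach is correct and is exactly what the paper has in mind: it offers no proof beyond the remark that ``these observations follow from straightforward computations in Fourier variables'' and a citation to \cite[Lem.~3.1]{Sun2021}. Your integration-by-parts argument on the Fourier side, producing the boundary term $\widehat f(0^+)$ that becomes $I_+(f)$, is precisely that computation.

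Two small corrections. First, with the paper's Fourier conventions the convolution carries a factor $\tfrac{1}{\sqrt{2\pi}}$, i.e.\ $\widehat{C_+(qf)}(\xi)=\tfrac{1}{\sqrt{2\pi}}\int_0^\infty \widehat q(\xi-\eta)\widehat f(\eta)\,d\eta$; once you include this, the boundary term $\tfrac{i}{\sqrt{2\pi}}\widehat q(\xi)\widehat f(0^+)$ combines with $I_+(f)=\sqrt{2\pi}\,\widehat f(0^+)$ to give exactly $\tfrac{i}{2\pi}\widehat{q_+}(\xi)I_+(f)$, with no guesswork needed.

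Second, and more importantly, your reading of the symbol ``$i$'' in the second identity is wrong. It does \emph{not} denote the constant function $i\cdot 1$ (which is not even in $L^2_+$); it denotes $i$ times the identity operator, so the formula reads $[X,\mc L]f = if - \tfrac{i}{2\pi}q_+I_+(f)$. Your own computation $[X,\mc L_0]f = if$ is correct; there is no need (and no way) to reinterpret $if$ as a constant. That the intended meaning is $i\cdot\Id$ is confirmed by how the identity is used in the proof of Lemma~\ref{L:comm 2}, where one finds $i\kappa R[X,\mc L]R = -\kappa R^2 - i\kappa R[X,C_+q]R$, which only balances if $[X,\mc L] = i\cdot\Id - [X,C_+q]$ as operators.
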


This expresses the well-known facts that the commutator of $X$ with a Toeplitz operator, such as $f\mapsto C_+(qf)$, is a rank-one operator, while that of $\partial$ and $X$ is the identity.   These observations follow from straightforward computations in Fourier variables; see, for example, \cite[Lem.~3.1]{Sun2021} for details.

\section{A new gauge}\label{S:gauge} 

In this section, we analyze the function $m=m(\kappa,q)$, which was introduced as the solution to the modified eigenvalue equation
\begin{equation}\label{m eq}
m' = -i\kappa m + i C_+[q(m+1)],
\end{equation}
or what is equivalent, $(\mc L + \kappa) m = q_+$.

As we will see in this section, this object plays many roles in the theory of \eqref{BO}.  The title of the section, however, reflects our new and crucial application of $m$ as a gauge transformation, replacing $q$ as the dynamical variable.

First we must show that such a function exists and derive its basic properties.  This certainly requires restrictions on $\kappa$; most naturally, we  should avoid the spectrum of $\mc L$.  For our purposes, it will suffice to consider $\kappa$ large and positive.  For the moment, we will continue to use the approach of Proposition~\ref{P:Lax} by requiring
\begin{equation}\label{big k0 1}
\kappa \geq C_s \bigl( 1 + \|q\|_{H^s_\kappa} \bigr)^{\frac1{2\eps}},
\end{equation}
for a suitable large constant $C_s$ and $\eps$ as in \eqref{seps}.  Once we have developed sufficient preliminaries, we will adopt the more permanent solution expounded in Convention~\ref{C:Akmn} below.

\begin{proposition}[Existence and Uniqueness]\label{P:m EU}
There is a constant $C_s\geq 1$ so that the following hold: For any $q\in H^s$ and $\kappa$ satisfying \eqref{big k0 1}, there is a unique $m\in H^{s+1}_+$ solving~\eqref{m eq}.  It is given by
\begin{equation}\label{m series}
m(x;\kappa,q) := R(\kappa,q) q_+ = R_0(\kappa) \sum_{\ell\geq 1} [C_+q R_0(\kappa)]^{\ell-1} q_+ 
\end{equation}
and satisfies
\begin{align}\label{m est}
\norm{m}_{H^{s+1}_\kappa} \lesssim \norm{q}_{H^s_\kappa}, \quad \|m\|_{L^\infty}<1,   \qtq{and} \norm{m}_{H^s} \lesssim \kappa^{-1} \norm{q}_{H^s} .
\end{align}
Moreover, if $q(x)$ belongs to $H^\infty$ then so too does $m(x)$.
\end{proposition}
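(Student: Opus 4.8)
The plan is to derive the whole proposition from Proposition~\ref{P:Lax}, since the hypothesis~\eqref{big k0 1} is exactly the condition~\eqref{big k0 0} under which $R(\kappa)=(\mc L+\kappa)^{-1}$ exists, the Neumann series~\eqref{R series} converges, and the bounds~\eqref{R est} hold.  As $q$ is real-valued, $q_+=C_+q\in H^s_+$ with $\|q_+\|_{H^s_\kappa}\le\|q\|_{H^s_\kappa}$, so I would simply \emph{define} $m:=R(\kappa)q_+$; feeding $q_+$ into~\eqref{R series} and reindexing the sum gives the closed form~\eqref{m series}, whose terms lie in $L^2_+$ because $R_0(\kappa)$ and $C_+q$ both preserve positive frequencies.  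The estimate $\|m\|_{H^{s+1}_\kappa}\lesssim\|q\|_{H^s_\kappa}$, and hence $m\in H^{s+1}_+$, is then immediate from the first inequality in~\eqref{R est}.  To see that $m$ solves~\eqref{m eq}, apply $\mc L+\kappa$ to $m=R(\kappa)q_+$ to get $\mc L m+\kappa m=q_+$ in $H^s_+$, substitute $\mc L m=-im'-C_+(qm)$, and rearrange.  For uniqueness, the difference $v\in H^{s+1}_+$ of two solutions satisfies $(\mc L_0+\kappa)v=C_+(qv)$, i.e.\ $v=R_0(\kappa)C_+(qv)$; the second inequality of~\eqref{E:Hs mult} then gives $\|v\|_{H^{s+1}_\kappa}\lesssim\kappa^{-2\eps}\|q\|_{H^s_\kappa}\|v\|_{H^{s+1}_\kappa}$, and since~\eqref{big k0 1} forces $\kappa^{-2\eps}\|q\|_{H^s_\kappa}\le C_s^{-2\eps}$, enlarging $C_s$ makes this prefactor $<1$, whence $v=0$.

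For $\|m\|_{L^\infty}<1$ the key is the elementary embedding $\|f\|_{L^\infty}\lesssim\kappa^{-2\eps}\|f\|_{H^{s+1}_\kappa}$: by Cauchy--Schwarz in Fourier, $\|\wh f\|_{L^1}\le\bigl(\int(|\xi|+\kappa)^{-2(s+1)}\,d\xi\bigr)^{1/2}\|f\|_{H^{s+1}_\kappa}$, and the integral equals a constant times $\kappa^{1-2(s+1)}=\kappa^{-4\eps}$ because $s+1>\tfrac12$, so $2(s+1)>1$ and the exponent $-4\eps$ is negative.  Combining this with $\|m\|_{H^{s+1}_\kappa}\lesssim\|q\|_{H^s_\kappa}$ and the constraint $\kappa^{-2\eps}\|q\|_{H^s_\kappa}\le C_s^{-2\eps}$ yields $\|m\|_{L^\infty}\le C\,C_s^{-2\eps}$, so taking $C_s$ large enough (larger than both the constant from Proposition~\ref{P:Lax} and the one needed for uniqueness) forces $\|m\|_{L^\infty}<1$.

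The remaining bound $\|m\|_{H^s}\lesssim\kappa^{-1}\|q\|_{H^s}$ is the step I expect to need genuine care: the naive route $\|m\|_{H^s}=\|R_0(\kappa)C_+[q(m+1)]\|_{H^s}\le\kappa^{-1}\|q(m+1)\|_{H^s}$ loses, because $\|m\|_{H^{1/2}}$ is only $O(\kappa^{2\eps})$.  Instead I would bound the series~\eqref{m series} term by term, exploiting that $R_0(\kappa)$ gains the \emph{full} factor $\kappa^{-1}$ in the plain $H^s$-norm (since $(\xi+\kappa)^{-1}\le\kappa^{-1}$ on $\xi\ge0$), not merely $\kappa^{-2\eps}$.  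The $\ell=1$ term obeys $\|R_0(\kappa)q_+\|_{H^s}\le\kappa^{-1}\|q\|_{H^s}$; for $\ell\ge2$ I would write $m_\ell=R_0(\kappa)\bigl[C_+qR_0(\kappa)\bigr]\bigl[C_+qR_0(\kappa)\bigr]^{\ell-2}q_+$ and apply, in this order, the operator bound $\|C_+qR_0(\kappa)\|_{H^s_\kappa\to H^s_\kappa}<\tfrac12$ from~\eqref{big k0 0'} to the last $\ell-2$ factors (producing $2^{-(\ell-2)}\|q\|_{H^s_\kappa}$), the first inequality of~\eqref{key est} to $C_+qR_0(\kappa)$ (producing $\kappa^{-2\eps}\|q\|_{H^s}$), and the $\kappa^{-1}$ gain of the outer $R_0(\kappa)$.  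Summing the geometric series and using $\|q\|_{H^s_\kappa}\lesssim\kappa^{2\eps}$ from~\eqref{big k0 1} to cancel the $\kappa^{-2\eps}$ gives $\sum_{\ell\ge2}\|m_\ell\|_{H^s}\lesssim\kappa^{-1}\|q\|_{H^s}$, which completes~\eqref{m est}.

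Finally, for propagation of regularity I would bootstrap on the fixed-point form $m=R_0(\kappa)C_+[q(m+1)]$: since $R_0(\kappa)$ maps $H^\sigma$ into $H^{\sigma+1}$ with constant $\le1$ and $H^\sigma$ is an algebra for $\sigma>\tfrac12$, if $q\in H^\infty$ and $m\in H^\sigma_+$ with $\sigma\ge s+1$ then $q(m+1)=qm+q\in H^\sigma$ and hence $m\in H^{\sigma+1}_+$.  Iterating from $\sigma=s+1$ places $m$ in $H^{s+k}_+$ for every $k\ge1$, i.e.\ $m\in H^\infty$.  Apart from the sharp $\kappa^{-1}$ gain discussed above, every step here is bookkeeping on top of Proposition~\ref{P:Lax} and Lemmas~\ref{L:Hs mult} and~\ref{L:key est}; this is why the argument is organized around the explicit series~\eqref{m series} rather than the fixed-point equation.
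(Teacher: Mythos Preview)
Your proof is correct and follows essentially the same blueprint as the paper: existence, the first bound in~\eqref{m est}, and the $L^\infty$ bound are handled identically, while your uniqueness argument via contraction is just an explicit rendering of the invertibility of $\mc L+\kappa$ guaranteed by Proposition~\ref{P:Lax}.

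There are two places where you take a genuinely different route.  For the bound $\|m\|_{H^s}\lesssim\kappa^{-1}\|q\|_{H^s}$, the paper does not go term by term.  Instead it observes that $\|(\mc L_0+\kappa)m\|_{H^s_\kappa}=\|m\|_{H^{s+1}_\kappa}\lesssim\|q\|_{H^s_\kappa}$ from the first bound already proved, rewrites $m=R_0(\kappa)\bigl[q_+ + C_+qR_0(\kappa)(\mc L_0+\kappa)m\bigr]$, and applies the first inequality of~\eqref{key est} once to the second term to obtain $\|m\|_{H^s}\lesssim\kappa^{-1}\bigl[\|q\|_{H^s}+\kappa^{-2\eps}\|q\|_{H^s}\|q\|_{H^s_\kappa}\bigr]$.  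Your series argument reaches the same conclusion with slightly more bookkeeping; the paper's version is cleaner but yours is equally valid.  For the $H^\infty$ claim, your bootstrap on $m=R_0(\kappa)C_+[q(m+1)]$ is a perfectly good qualitative proof.  The paper instead differentiates the series~\eqref{m series} via the translation identity $m(x+h;\kappa,q)=m(x;\kappa,q(\cdot+h))$ and a multinomial expansion, arriving at the explicit quantitative bound~\eqref{quant H infty}.  This is more laborious, but be aware that the quantitative form is actually used later (in the proof of Theorem~\ref{t:Hk flow}, to run a Gronwall argument for propagation of regularity under the $H_\kappa$ flow), so your simpler bootstrap, while sufficient for the proposition as stated, would leave a small gap to fill downstream.
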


\begin{proof}
Proposition~\ref{P:Lax} guarantees the existence of $C_s\geq 1$ so that $\mc L+\kappa$ is invertible whenever \eqref{big k0 1} holds; indeed, this is demonstrated by proving the convergence of the series \eqref{R series}.  This verifies the existence and uniqueness of $m$, as well as formula \eqref{m series}.  In fact, by Proposition ~\ref{P:Lax} we see that $m$ is unique not only in $H^{s+1}$ but also in the larger space $H^{1/2}$. 

The first estimate in \eqref{m est} follows directly from \eqref{R est}.  Using this we also see that
$$
	\| (\mc L_0 +\kappa) m \|_{H^{s}_\kappa} = \| m \|_{H^{s+1}_\kappa} \lesssim \norm{q}_{H^s_\kappa}.
$$
Writing $m=R_0(\kappa)\bigl[ q_+ + C_+ q R_0(\kappa) (\mc L_0 +\kappa) m\bigr]$ and using \eqref{key est}, we deduce that
$$
	\| m\|_{H^s} \lesssim \kappa^{-1}\bigl[\|q\|_{H^s} + \kappa^{-2\eps} \|q\|_{H^s} \| q \|_{H^{s}_\kappa}\bigr].
$$
The last estimate in \eqref{m est} now follows from our assumption on $\kappa$.

Using Cauchy--Schwarz in the frequency variable and \eqref{big k0 1}, we find
\begin{align*}
\|m\|_{L^\infty}\lesssim \kappa^{-2\eps} \norm{m}_{H^{s+1}_\kappa} \lesssim C_s^{-2\eps}.
\end{align*}
The middle bound in \eqref{m est} follows by choosing $C_s$ large enough.
 		
Finally, we turn to the statement that $q\in H^\infty$ implies $m\in H^\infty$.  By uniqueness, the $m$ associated to a translated potential is simply given by the translation of $m$:
\begin{equation}\label{m id}
m(x+h;\kappa,q) = m(x;\kappa,q(\cdot+h)) \quad\text{for all }h\in\R .
\end{equation}
For any integer $\sigma\geq1$, we use \eqref{m id} and \eqref{m series} to see that
\begin{equation*}
m^{(\sigma)} = \sum_{\ell\geq 1} \sum_{\substack{ \sigma_1,\dots,\sigma_\ell\geq 0 \\ \sigma_1+\dots+\sigma_\ell = \sigma }}  \binom{\sigma}{\sigma_1 \dots \sigma_\ell}R_0C_+q^{(\sigma_1)} R_0C_+q^{(\sigma_2)} \cdots R_0 q^{(\sigma_\ell)}_+ 
\end{equation*}
and so deduce that
\begin{equation*}
\| m^{(\sigma)} \|_{H^{s+1}_\kappa} \leq \sum_{\ell\geq 1} \ell^\sigma
	\sup_{\substack{ \sigma_1,\dots,\sigma_\ell\geq 0 \\ \sigma_1+\dots+\sigma_\ell = \sigma }}
	\| q^{(\sigma_\ell)}\|_{H^s_\kappa} \prod_{i=1}^{\ell-1}\| C_+q^{(\sigma_i)} R_0 C_+\|_{H^s_\kappa\to H^s_\kappa} .
\end{equation*}
For any $1\leq i\leq\ell-1$ with $\sigma_i=0$, we apply \eqref{big k0 0'}. This leaves at most $\sigma$ many of the coefficients $\sigma_1,\ldots,\sigma_{\ell-1}$ that may be non-zero.  We estimate these remaining factors with \eqref{key est}, combine them with $q^{(\sigma_\ell)}$, and use that
$$
\prod_{j=1}^J \| q^{(\tilde \sigma_j)} \|_{H^s_\kappa} \leq \| q \|_{H^s_\kappa}^{J-1}\| q \|_{H^{s+\sigma}_\kappa}
	\qtq{whenever} \tilde\sigma_1+\cdots+\tilde\sigma_J = \sigma .
$$
In this way, we obtain 
\begin{equation}\label{quant H infty}
\snorm{ m^{(\sigma)} }_{H^{s+1}_\kappa}
\lesssim \sum_{\ell=1}^\infty \ell^\sigma 2^{\sigma-\ell} \Bigl( 1 + \| q\|_{H^{s}_\kappa}\Bigr)^\sigma \| q \|_{H^{s+\sigma}_\kappa}  < \infty
\end{equation}
for any $q\in H^{\infty}$ and any $\kappa$ satisfying \eqref{big k0 1}.
\end{proof}

\begin{proposition}[Diffeomorphism property]\label{P:m Diff}
There is a constant $C_s\geq 1$ so that for any $A>0$ and $\kappa$ satisfying
\begin{equation}\label{big k0 1111}
\kappa \geq C_s \bigl( 1 + A \bigr)^{\frac1{2\eps}},
\end{equation}
the mapping $q\mapsto m$ is a diffeomorphism from $\BA$ into $H^{s+1}$.
\end{proposition}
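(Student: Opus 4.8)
The plan is to apply the inverse function theorem to the map $\Phi\colon q\mapsto m$, regarded as a map from $\BA$ into the closed subspace $H^{s+1}_+\subset H^{s+1}$ (recall $m\in H^{s+1}_+$ by Proposition~\ref{P:m EU}). Concretely, I would establish three facts: (i) $\Phi$ is smooth --- indeed real-analytic --- on $\BA$; (ii) for every $q\in\BA$ the differential $d\Phi|_q$ is a real-linear topological isomorphism from $H^s$ onto $H^{s+1}_+$, with operator norm and inverse norm bounded uniformly over $\BA$; and (iii) $\Phi$ is globally injective on $\BA$. Granting these, the inverse function theorem makes $\Phi$ a local diffeomorphism onto open subsets of $H^{s+1}_+$, and global injectivity upgrades this to a diffeomorphism from $\BA$ onto its image in $H^{s+1}$, with real-analytic inverse. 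Fact (i) is immediate from \eqref{m series}: under \eqref{big k0 1111} the expansion $m=R_0(\kappa)\sum_{\ell\ge1}[C_+qR_0(\kappa)]^{\ell-1}q_+$ converges in operator norm uniformly for $q\in\BA$, since $\|C_+qR_0(\kappa)\|_{H^s_\kappa\to H^s_\kappa}<\tfrac12$ by Lemma~\ref{L:key est}, and a uniformly convergent power series is real-analytic.

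To obtain (ii) I would differentiate the defining relation $(\mc L+\kappa)m=q_+$ in $q$. Since $q\mapsto\mc L$ is affine with $d\mc L|_q[h]f=-C_+(hf)$, this gives $(\mc L+\kappa)\,d\Phi|_q[h]=h_++C_+(hm)=C_+\!\big(h(m+1)\big)$, hence
\begin{equation*}
d\Phi|_q[h]=R(\kappa;q)\,C_+\!\big(h(m+1)\big)=R(\kappa;q)\bigl[h_++C_+(hm)\bigr]=:R(\kappa;q)\,T_q h .
\end{equation*}
Now $h\mapsto h_+$ is a real-linear isomorphism from $H^s$ onto $H^s_+$, with uniform bounds in the $\kappa$-norms, while Lemma~\ref{L:Hs mult} and the bound $\|m\|_{H^{s+1}_\kappa}\lesssim\|q\|_{H^s_\kappa}\le A$ from \eqref{m est} give $\|C_+(hm)\|_{H^s_\kappa}\lesssim\kappa^{-2\eps}A\|h\|_{H^s_\kappa}$; thus $T_q$ is an $O(\kappa^{-2\eps}A)$ perturbation of $h\mapsto h_+$ and so an isomorphism $H^s\to H^s_+$ once $\kappa$ is large in terms of $A$. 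Likewise $R(\kappa;q)$ restricts to an isomorphism $H^s_+\to H^{s+1}_+$: the free resolvent $R_0(\kappa)$ is an isometry from $H^s_\kappa$ onto $H^{s+1}_\kappa$ on positive-frequency functions, and $\|[R(\kappa)-R_0(\kappa)]\cdot\|_{H^{s+1}_\kappa}\lesssim\kappa^{-2\eps}A\|\cdot\|_{H^s_\kappa}$ by \eqref{R est}. Composing, and absorbing the resulting cross terms --- all of size $O(\kappa^{-2\eps}A)$ with constants depending on $s$ alone --- into a Neumann series, one finds that $d\Phi|_q$ is a real-linear topological isomorphism $H^s\to H^{s+1}_+$ with the asserted uniform bounds, after enlarging $C_s$ in \eqref{big k0 1111}.

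The point that makes (iii) work is that the leading part of $d\Phi|_q$, namely $R_0(\kappa)\circ(h\mapsto h_+)$, is independent of $q$. Given $q_0,q_1\in\BA$, the segment $q_t:=(1-t)q_0+tq_1$ lies in $\BA$, so the estimates from (ii) hold uniformly along it; and if $\Phi(q_0)=\Phi(q_1)$ then, $\Phi$ being $C^1$,
\begin{equation*}
0=\Phi(q_1)-\Phi(q_0)=\Bigl(\int_0^1 d\Phi|_{q_t}\,dt\Bigr)[q_1-q_0] .
\end{equation*}
The averaged operator $\int_0^1 d\Phi|_{q_t}\,dt$ equals the fixed isomorphism $R_0(\kappa)\circ(h\mapsto h_+)$ plus a term of norm $O(\kappa^{-2\eps}A)$, hence is invertible for $\kappa$ as in \eqref{big k0 1111}, forcing $q_1=q_0$.

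The main obstacle --- indeed essentially the only nonroutine point --- is the uniform perturbative bookkeeping: one must verify that every correction above is genuinely $O(\kappa^{-2\eps}A)$ with implicit constants depending on $s$ only, so that a single choice of $C_s$ renders $T_q$, the resolvent factor $R(\kappa;q)$, and the averaged differential simultaneously invertible for all $q\in\BA$. This is precisely the content of Lemma~\ref{L:Hs mult}, Lemma~\ref{L:key est}, \eqref{R est}, and the bounds \eqref{m est}. The circle case is handled identically; the only difference is the constant contributions coming from \eqref{Cid} (the mean $\tint$), which are controlled by the same estimates.
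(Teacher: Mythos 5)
Your proposal is correct and follows essentially the same route as the paper: compute the differential of $q\mapsto m$, show it is a uniformly small (of size $O(\kappa^{-2\eps}A)$) perturbation of the fixed isomorphism $h\mapsto R_0(\kappa)h_+$, and conclude via a global form of the inverse function theorem on the convex ball. The only presentational differences are that you factor $d\Phi|_q=R(\kappa;q)\circ T_q$ and verify each factor is a perturbed isomorphism separately, whereas the paper directly estimates $\|d m|_q-d m|_0\|_{H^s_\kappa\to H^{s+1}_\kappa}\lesssim\kappa^{-2\eps}\|q\|_{H^s_\kappa}$; and you spell out global injectivity with the mean-value inequality $\Phi(q_1)-\Phi(q_0)=\bigl(\int_0^1 d\Phi|_{q_t}\,dt\bigr)[q_1-q_0]$, which the paper subsumes in the phrase ``the standard contraction-mapping proof of the inverse function theorem'' applied uniformly on the convex set. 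Both give the same conclusion and rely on the same estimates (Lemma~\ref{L:Hs mult}, Lemma~\ref{L:key est}, \eqref{R est}, \eqref{m est}).
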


\begin{proof}
Initially, we choose $C_s$ as required by Propositions~\ref{P:Lax} and \ref{P:m EU}.  For $g\in H^s$, the resolvent identity implies
\begin{equation}\label{dm}
dm|_q(g)= \frac{d}{d\theta} m(x;\kappa,q+\theta g) \bigg|_{\theta=0}= R(\kappa,q) \bigl[ (m+1)C_+g\bigr],
\end{equation}
which for $q \equiv 0$ reduces to
\begin{equation}\label{dm0}
dm|_0(g)= R_0(\kappa)C_+ g .
\end{equation}
Taking a supremum over $g\in H^s_\kappa$ and using \eqref{R est}, \eqref{E:Hs mult}, and \eqref{m est}, we deduce that
\begin{equation}
\bnorm{ dm|_q - dm|_0 }_{H^s_\kappa \to H^{s+1}_\kappa} \lesssim \kappa^{-2\eps} \|q\|_{H^s_\kappa} \lesssim C_s^{-2\eps},
\label{diffeo 1}
\end{equation}
uniformly for $q\in \BA$ and $\kappa$ satisfying \eqref{big k0 1111}.
	
On the other hand, for $f\in H^{s+1}_+$ we have
\begin{equation*}
\bnorm{ (dm|_0)^{-1}(f) }_{H^s_\kappa}^2 \leq 2\norm{f}_{H^{s+1}_\kappa}^2 ,
\end{equation*}
and so
\begin{equation}\label{diffeo 2}
\bnorm{ (dm|_0)^{-1} }^{-1}_{H^{s+1}_\kappa \to H^s_\kappa}\geq \tfrac{1}{\sqrt{2}}.
\end{equation}
	
Combining~\eqref{diffeo 1} and~\eqref{diffeo 2}, we see that enlarging $C_s$ if necessary,
\begin{equation*}
\bnorm{ dm|_q - dm|_0 }_{H^s_\kappa \to H^{s+1}_\kappa} \leq \tfrac12 \bnorm{ (dm|_0)^{-1} }^{-1}_{H^{s+1}_\kappa \to H^s_\kappa} .
\end{equation*}
Using this as input for the standard contraction-mapping proof of the inverse function theorem, we conclude that we may pick $C_s$ sufficiently large so that
\begin{equation*}
q \mapsto m \quad\text{is a diffeomorphism from }\{ q: \norm{q}_{H^s_\kappa} \leq A \} \text{ into }H^{s+1}_\kappa
\end{equation*}
for all $\kappa$ satisfying \eqref{big k0 1111}. As the domain $\{ q: \norm{q}_{H^s_\kappa} \leq A \}$ includes the smaller domain $\BA$, this completes the proof.
\end{proof}

\begin{proposition}\label{P:beta}
There is a constant $C_s\geq 1$ so that for $q\in H^s$ and $\kappa$ satisfying \eqref{big k0 1}, the quantity
\begin{equation}\label{beta}
\beta(\kappa;q): = \int q(x) m(x;\kappa,q)\,dx = \int q(x) \ol{m}(x;\kappa,q)\,dx = \bigl\langle q_+, (\mc L+\kappa)^{-1} q_+ \bigr\rangle
\end{equation}
is finite and real-valued.  For such $\kappa$, this is a real-analytic function of $q$ with
\begin{equation}\label{dbeta}
\tfrac{\del\beta}{\del q} =  m + \ol m +  |m|^2 
\end{equation}
and satisfies
\begin{equation}\label{beta est}
C_s^{-1} \norm{q}_{H^s_\kappa}^2 \leq \int_{\kappa}^\infty \vk^{2s} \beta(\vk;q)\,d\vk \leq C_s\norm{q}_{H^s_\kappa}^2.
\end{equation}
Lastly, for each $q\in H^s$, the mapping $z\mapsto\beta(z;q)$ extends to a meromorphic function on $\{z\in\C:\Re z >0\}$.
\end{proposition}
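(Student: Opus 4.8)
My plan rests entirely on the representation $\beta(\kappa;q)=\langle q_+,R(\kappa)q_+\rangle$ together with the resolvent bounds already recorded in Propositions~\ref{P:Lax} and~\ref{P:m EU}. First, for finiteness: under \eqref{big k0 1} the resolvent $R(\kappa)=(\mc L+\kappa)^{-1}$ sends $H^s_+$ into $H^{s+1}_+$, which embeds into $H^{-s}_+$ precisely because $s>-\tfrac12$, so the $L^2$ pairing of $q_+\in H^s$ with $m=R(\kappa)q_+$ converges and $|\beta|\lesssim\|q_+\|_{H^s_\kappa}\|m\|_{H^{s+1}_\kappa}\lesssim\|q\|_{H^s_\kappa}^2$ by \eqref{m est}. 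Reality is then immediate from selfadjointness of $\mc L$ and $\kappa\in\R$: $\ol{\langle q_+,R(\kappa)q_+\rangle}=\langle R(\kappa)q_+,q_+\rangle=\langle q_+,R(\kappa)q_+\rangle$. To identify the three displayed expressions for $\beta$, I would expand $\int q\,m$ in Fourier variables: since $\wh m$ is supported in $[0,\infty)$ and $\wh q(-\xi)=\ol{\wh q(\xi)}$ by reality of $q$, this integral equals $\langle q_+,m\rangle$ in both geometries (the zero frequency causing no trouble on $\T$), and then $\int q\,\ol m=\ol{\int q\,m}=\ol\beta=\beta$.

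For real-analyticity I would argue functorially: $q\mapsto\mc L_q$ is affine with bounded linear part $q\mapsto C_+(q\,\cdot)\in\mc B(H^{1/2}_+,H^{-1/2}_+)$ (by \eqref{ke2}); operator inversion is analytic on the open set of invertibles, which under \eqref{big k0 1} contains $\mc L_q+\kappa$ by Proposition~\ref{P:Lax}; and $\beta$ is obtained from these by composing with the affine map $q\mapsto q_+$ and the bounded bilinear pairing $H^s\times H^{-s}\to\C$. The variational derivative follows from the first resolvent identity $\tfrac{d}{d\theta}R(\kappa;q+\theta g)\big|_{\theta=0}=R(\kappa)\,C_+(g\,\cdot)\,R(\kappa)$: differentiating $\langle(q+\theta g)_+,R(\kappa;q+\theta g)(q+\theta g)_+\rangle$ produces three terms, and shuffling $R(\kappa)$ and $C_+$ by selfadjointness (and using $R(\kappa)q_+=m$) collapses them to $\int g\,m+\int g\,\ol m+\int g\,|m|^2$, so $\tfrac{\delta\beta}{\delta q}=m+\ol m+|m|^2$, which is manifestly real since it equals $|m+1|^2-1$.

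The real work is the two-sided bound \eqref{beta est}, and here the efficient route is a \emph{form} comparison rather than a direct estimate of $R(\kappa)-R_0(\kappa)$ (the latter loses too much when $|s|$ is small to produce an integrable $\vk$-tail). The form bound \eqref{q vs L0} reads $|\langle f,qf\rangle|\lesssim\vk^{-2\eps}\|q\|_{H^s_\vk}\langle f,(\mc L_0+\vk)f\rangle$; since $\vk\mapsto\vk^{-2\eps}\|q\|_{H^s_\vk}$ is non-increasing, choosing $C_s$ in \eqref{big k0 1} large enough forces this quantity below $\tfrac12$ \emph{for every} $\vk\ge\kappa$ at once, which gives $\tfrac12(\mc L_0+\vk)\le\mc L+\vk\le\tfrac32(\mc L_0+\vk)$ and hence, by operator monotonicity of inversion, $\beta(\vk;q)\sim\langle q_+,(\mc L_0+\vk)^{-1}q_+\rangle$ uniformly in $\vk\ge\kappa$. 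It then suffices to evaluate the free quantity, which in Fourier variables is $\int_0^\infty\tfrac{|\wh q(\xi)|^2}{\xi+\vk}\,d\xi$ (a sum on $\T$). By Tonelli,
\[ \int_\kappa^\infty \vk^{2s}\,\langle q_+,(\mc L_0+\vk)^{-1}q_+\rangle\,d\vk = \int_0^\infty |\wh q(\xi)|^2\Bigl(\int_\kappa^\infty \tfrac{\vk^{2s}}{\xi+\vk}\,d\vk\Bigr)\,d\xi , \]
and an elementary case split ($\xi\le\kappa$ versus $\xi>\kappa$, using $-1<2s<0$) shows the inner integral is comparable to $(\xi+\kappa)^{2s}$; integrating against $|\wh q(\xi)|^2$ and restoring the negative frequencies via reality of $q$ yields $\|q\|_{H^s_\kappa}^2$, with constants depending only on $s$. (Tonelli also delivers finiteness of the $\vk$-integral along the way.) The point requiring care is exactly the uniformity of the form comparison down to $\vk=\kappa$, which is what dictates the required size of $C_s$; the zero-frequency term on $\T$ is harmless bookkeeping.

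Finally, the meromorphic extension of $z\mapsto\beta(z;q)=\langle q_+,(\mc L+z)^{-1}q_+\rangle$ to $\{\Re z>0\}$ is immediate from the last assertion of Proposition~\ref{P:Lax} applied with $f=q_+\in H^s_+$: since $\sess(\mc L)=\sess(\mc L_0)\subset[0,\infty)$, the half-plane $\{\Re z>0\}$ lies in the region $\{z:-z\in\C\setminus\sess(\mc L)\}$ on which that statement already provides meromorphy.
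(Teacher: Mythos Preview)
Your proof is correct. The treatment of finiteness, reality, the functional derivative, and the meromorphic extension is essentially the same as the paper's (the paper derives real-analyticity from the convergent series \eqref{m series} rather than from analyticity of operator inversion, but these are equivalent observations).

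Where you genuinely diverge is in the proof of the two-sided bound \eqref{beta est}. The paper does \emph{not} use the form comparison you describe. Instead, it splits $\beta(\vk)=\langle q_+,R_0(\vk)q_+\rangle+\langle q_+,[R(\vk)-R_0(\vk)]q_+\rangle$, evaluates the first piece exactly (this is the computation you also perform), and then estimates the remainder directly using the second bound in \eqref{R est} together with duality, obtaining
\[
\bigl|\langle q_+,[R(\vk)-R_0(\vk)]q_+\rangle\bigr|\lesssim \vk^{-1-2s-2\eps}\|q\|_{H^s_\vk}^3 ,
\]
whose $\vk^{2s}$-weighted integral over $[\kappa,\infty)$ is bounded by $\kappa^{-2\eps}\|q\|_{H^s_\kappa}^3$ and can be absorbed by enlarging $C_s$. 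So your aside that the remainder approach ``loses too much when $|s|$ is small'' is not accurate: the extra factor $\vk^{-2\eps}$ makes the tail integrable for every $s\in(-\tfrac12,0)$. That said, your route via the form inequality $\tfrac12(\mc L_0+\vk)\le \mc L+\vk\le \tfrac32(\mc L_0+\vk)$ and operator monotonicity of inversion is clean and avoids the remainder estimate altogether; it gives $\beta(\vk)\sim\langle q_+,R_0(\vk)q_+\rangle$ with absolute constants in one stroke. The paper's approach, by contrast, yields a bit more: it shows the discrepancy between $\beta(\vk)$ and its quadratic approximation is \emph{cubically} small in $q$, which is sometimes useful downstream.
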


\begin{proof}
Proposition~\ref{P:m EU} shows that for a suitable choice of $C_s$, we are guaranteed that $m=(\mc L+\kappa)^{-1} q_+$ exists and lies in $H^{s+1}$.  This in turn means that $m$ defines a bounded linear functional on $H^s$ under the natural pairing:
\begin{align*}
\int q(x) m(x;\kappa,q)\,dx = \langle q, m\rangle = \langle q_+, m\rangle = \bigl\langle q_+, (\mc L+\kappa)^{-1} q_+ \bigr\rangle.
\end{align*}
As $\mc L$ is a selfadjoint operator, this quantity is real.  This proves all the identities stated in \eqref{beta}.  The possibility of extending this to a meromorphic function in the right half-plane follows from Proposition~\ref{P:Lax} and the final representation in \eqref{beta}.

The fact that $\beta$ is a real-analytic function of $q$ follows from the convergence of the series \eqref{m series}.
Using the functional derivative~\eqref{dm} of $m$, we see that
\begin{align*}
d\beta|_q(f) &= \int  f m  + q \cdot R(\kappa,q)[(m+1)f_+] \,dx  \\
&= \int f m + \ol{R(\kappa,q)q_+} \cdot (m+1)f \,dx\\
&= \int [m +\ol{m}(m+1) ] f \,dx,
\end{align*}
which yields \eqref{dbeta}.

It remains to prove \eqref{beta est}.  As we will see, this may require us to increase $C_s$.  
Let us first examine a quadratic approximation of the central object.  By Plancherel and Fubini,
\begin{align}\label{equicty 5}
\int_{\kappa}^\infty \!\vk^{2s} \langle q_+, R_0(\vk)q_+\rangle \,d\vk
&= \int_{0}^\infty \! \int_{\kappa}^\infty \! \vk^{2s} \frac{|\wh{q}(\xi)|^2}{\xi+\vk}\,d\vk\,d\xi
\simeq_s \norm{q_+}_{H^{s}_\kappa}^2.
\end{align}

This leaves us to control the remainder. Using the duality of $H^{s+1}_\vk$ and $H^{-(s+1)}_\vk$ and \eqref{R est}, we have
\begin{align*}
\bigl\langle q_+, \bigl[R(\vk)-R_0(\vk)\bigr]q_+\bigr\rangle
&\lesssim \vk^{-2\eps} \norm{q_+}_{H^{-(s+1)}_\vk} \|q\|_{H^s_\vk}^2 \lesssim \vk^{-1-2s-2\eps} \, \|q\|_{H^s_\vk}^3
\end{align*}
for any $q\in H^s$ and $\vk\geq\kappa$.  In this way,  we deduce that
\begin{align*}
\int_{\kappa}^\infty \!\vk^{2s} \bigl\langle q_+, \bigl[R(\vk)-R_0(\vk)\bigr]q_+\bigr\rangle \,d\vk 
&\lesssim \|q\|_{H^s_\kappa} \int_{\kappa}^\infty \!  \vk^{-1-2\eps} \|q\|_{H^s_\vk}^2 \, d\vk  \\ 
&\lesssim_s \kappa^{-2\eps} \norm{q_+}_{H^{s}_\kappa}^3.
\end{align*}
Combining this with \eqref{equicty 5} and taking $C_s$ sufficiently large, we conclude that \eqref{beta est} holds.
\end{proof}

Propositions~\ref{P:Lax}, \ref{P:m EU}, \ref{P:m Diff}, and~\ref{P:beta} show important quantitative properties of $m$ and $\beta$ under the restriction that $\kappa$ is large enough, depending on the size of $q$.  Ultimately, we wish to consider trajectories in $H^s$ rather than individual $q\in H^s$ and so we must account for the possibility that the $H^s$ norm of solutions may grow.  

For the flows of interest to us, $\beta$ is conserved and our next lemma shows how this fact can be leveraged to control the growth and equicontinuity of trajectories.  Indeed, this will lead to an alternate proof of Theorem~\ref{T:Talbut} based on $\beta(\kappa;q)$, rather than the perturbation determinant; see Corollary~\ref{C:Qstar}.  

One may wonder what conservation of $\beta$ means if the $\kappa$-interval on which it is defined depends on $q$ itself.  It was to address this irritation that we demonstrated that $\beta$ can be interpreted as a meromorphic function on the right half-plane.  Evidently, if $\beta(\kappa;q_0)$ and $\beta(\kappa;q_1)$ agree on some ray $\kappa\geq \kappa_1$ then they agree throughout the right half-plane (as meromorphic functions).

\begin{lemma}\label{L:beta boot}
Given $A>0$ and $Q\subset \BA$, let 
\begin{align*}
Q_{**}= \Bigl\{ q(b) \Big|\, q:[a,b]\to H^s \text{ is continuous, } q(a)\in Q, \text{ and } \beta(z;q(t))\equiv \beta(z;q(a))\Bigr\},
\end{align*}
where $\beta(z;q(t))\equiv \beta(z;q(a))$ indicates equality as meromorphic functions on the right-half plane for all $t\in[a,b]$.
Then $Q_{**}$ is bounded; indeed, for $C_s$ as in Proposition~\ref{P:beta},
\begin{align}\label{beta APB}
\sup_{q\in Q_{**}}\norm{q}_{H^s}  \lesssim C_s^{1+|s|} \bigl( 1 + 2 C_sA \bigr)^{\frac{2|s|}{1-2|s|}} A .
\end{align}
Moreover, if $Q$ is $H^s$-equicontinuous, then so too is $Q_{**}$.
\end{lemma}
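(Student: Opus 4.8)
The plan is to exploit the two-sided estimate \eqref{beta est} together with conservation of $\beta$ along the trajectories appearing in the definition of $Q_{**}$. The crucial point is that \eqref{beta est} is only available once $\kappa$ is large enough relative to $\|q\|_{H^s_\kappa}$ (condition \eqref{big k0 1}), so the argument must be arranged as a bootstrap: we use boundedness of $Q$ to get control for $\kappa$ above some threshold $\kappa_0 = \kappa_0(A)$, then propagate that control along the trajectory using the invariance of $\beta$ as a meromorphic function, thereby showing that the threshold does not need to move.

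Here is how I would carry it out. First, fix $q(a) \in Q \subset \BA$ and a trajectory $q:[a,b]\to H^s$ with $\beta(z;q(t)) \equiv \beta(z;q(a))$. Set $\kappa_0 := C_s(1+2C_sA)^{1/2\eps}$ with $C_s$ as in Proposition~\ref{P:beta}. For any $\kappa \geq \kappa_0$, I claim $\|q(t)\|_{H^s_\kappa} \leq 2C_s A$ for all $t$; grant this for the moment. Then \eqref{big k0 1} holds for all such $\kappa$ along the whole trajectory, so \eqref{beta est} applies at both $q(a)$ and $q(t)$, and since the two integrals $\int_\kappa^\infty \vk^{2s}\beta(\vk;q(\cdot))\,d\vk$ coincide (the integrands agree by meromorphic identity, and for $\vk \geq \kappa_0$ they are genuine values of $\beta$, hence finite and equal), we get
\begin{equation*}
\|q(b)\|_{H^s_\kappa}^2 \leq C_s \int_\kappa^\infty \vk^{2s}\beta(\vk;q(a))\,d\vk \leq C_s^2 \|q(a)\|_{H^s_\kappa}^2 \leq C_s^2 A^2
\end{equation*}
for every $\kappa \geq \kappa_0$, in particular at $\kappa = \kappa_0$. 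Then I convert the $H^s_{\kappa_0}$ bound back to an $H^s = H^s_1$ bound: since $(|\xi|+1)^{-2|s|} \leq \kappa_0^{2|s|}(|\xi|+\kappa_0)^{-2|s|}$, we have $\|q(b)\|_{H^s} \leq \kappa_0^{|s|}\|q(b)\|_{H^s_{\kappa_0}} \lesssim \kappa_0^{|s|} C_s A$, and plugging in $\kappa_0 = C_s(1+2C_sA)^{1/2\eps}$ with $\tfrac{|s|}{2\eps} = \tfrac{|s|}{\frac12 - |s|} = \tfrac{2|s|}{1-2|s|}$ gives exactly \eqref{beta APB}.

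The main obstacle — and the reason for the bootstrap — is justifying the claim $\|q(t)\|_{H^s_\kappa} \leq 2C_s A$ for $\kappa \geq \kappa_0$ uniformly in $t$. This is a standard continuity-in-$t$ argument: the set of $t \in [a,b]$ for which $\|q(t)\|_{H^s_\kappa} \leq 2C_sA$ for \emph{all} $\kappa \geq \kappa_0$ is closed (continuity of $t\mapsto q(t)$ in $H^s$, plus monotone convergence in $\kappa$) and contains $a$ (since $\|q(a)\|_{H^s_\kappa} \leq \|q(a)\|_{H^s} \leq A$ for $\kappa\geq 1$); to see it is open, note that at any such $t$ the bound $\|q(t)\|_{H^s_\kappa}\leq 2C_sA$ together with $\kappa \geq \kappa_0 = C_s(1+2C_sA)^{1/2\eps}$ verifies \eqref{big k0 1}, so the estimate \eqref{beta est} and the displayed computation above actually improve the bound at $t$ to $\|q(t)\|_{H^s_\kappa} \leq C_s A$, a strict improvement, and continuity then keeps us below $2C_sA$ on a neighborhood. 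Hence the set is all of $[a,b]$, proving the claim and with it \eqref{beta APB}.

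For the equicontinuity statement, suppose $Q$ is $H^s$-equicontinuous; by Lemma~\ref{L:equi} this is equivalent to $\sup_{q\in Q}\|q\|_{H^s_\kappa}\to 0$ as $\kappa\to\infty$, and it suffices to prove the same for $Q_{**}$. Fix $\del>0$. For $\kappa \geq \kappa_0$ (which we have just shown puts us in the regime where \eqref{beta est} is valid all along every trajectory), the identity of the meromorphic functions $\beta(\cdot;q(b))$ and $\beta(\cdot;q(a))$ on the ray $[\kappa_0,\infty)$ gives, via \eqref{beta est},
\begin{equation*}
\|q(b)\|_{H^s_\kappa}^2 \leq C_s\int_\kappa^\infty \vk^{2s}\beta(\vk;q(b))\,d\vk = C_s\int_\kappa^\infty \vk^{2s}\beta(\vk;q(a))\,d\vk \leq C_s^2\|q(a)\|_{H^s_\kappa}^2.
\end{equation*}
Since $q(a)\in Q$ and $Q$ is equicontinuous, $\sup_{q\in Q}\|q\|_{H^s_\kappa}^2 \leq \del/C_s^2$ for $\kappa$ large enough, whence $\sup_{q\in Q_{**}}\|q\|_{H^s_\kappa}^2 \leq \del$. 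As $\del$ was arbitrary, $Q_{**}$ is equicontinuous by Lemma~\ref{L:equi}.
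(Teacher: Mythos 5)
Your proof is correct and follows essentially the same route as the paper's: a continuity-in-$t$ bootstrap on the $H^s_\kappa$ norm, closed by the two-sided estimate \eqref{beta est} together with the invariance of $\beta$ along the trajectory, followed by the conversion $\|q\|_{H^s}\leq\kappa^{|s|}\|q\|_{H^s_\kappa}$ at $\kappa=\kappa_0(A)$. The only difference is that you spell out the open-closed argument (and the point that the meromorphic identity coincides with genuine values of $\beta$ on the ray $[\kappa_0,\infty)$) which the paper compresses into the phrase ``a standard bootstrap argument.''
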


\begin{proof}
Given $q(a)\in Q$, consider
\begin{equation}\label{kappa boot0}
\kappa \geq C_s \bigl( 1 + 2 C_s\|q(a)\|_{H^s_\kappa} \bigr)^{\frac1{2\eps}}.
\end{equation}
For such $\kappa$ and any time interval $[a,T]$ on which
\begin{equation}
\|q(t)\|_{H^s_\kappa} \leq 2 C_s \|q(a)\|_{H^s_\kappa}, 
\end{equation}
we may apply the equivalence \eqref{beta est} to deduce that 
\begin{equation}\label{beta est'}
\norm{q(t)}_{H^s_\kappa} \leq  C_s \norm{q(a)}_{H^s_\kappa} .
\end{equation}
A standard bootstrap argument then shows that \eqref{beta est'} holds on the entire time interval $[a,b]$.

As $Q\subset \BA$, the hypothesis \eqref{kappa boot0} is satisfied for every $q(a)\in Q$ with
\begin{equation}
\kappa = C_s \bigl( 1 + 2 C_sA \bigr)^{\frac1{2\eps}}.
\end{equation}
Using this choice, we obtain
\begin{equation}
\kappa^s\sup_{q\in Q_{**}}\norm{q}_{H^s} \leq \sup_{q\in Q_{**}}\norm{q}_{H^s_\kappa} \leq  C_s \sup_{q\in Q}\norm{q}_{H^s_\kappa}
	\leq  C_s \sup_{q\in Q}\norm{q}_{H^s}
\end{equation}
and thence \eqref{beta APB}.

The equicontinuity of $Q_{**}$ follows from that of $Q$ by Lemma~\ref{L:equi} and \eqref{beta est'}.
\end{proof}

We have now proven all the results we need that require us to adjust the constant $C_s$ and so are ready to adopt our unified notion of $\kappa$ being sufficiently large.  Moreover, Lemma~\ref{L:beta boot} allows us do this in a way that ensures $\kappa$ remains sufficiently large for all trajectories of interest to us.  We also take the opportunity to introduce the abbreviated notation \eqref{mn abbrev}.
 
\begin{convention}\label{C:Akmn}
Given $A>0$, we choose $\kappa_0=\kappa_0(A)$ large enough so that the hypotheses of Propositions~\ref{P:Lax}, \ref{P:m EU}, \ref{P:m Diff}, and~\ref{P:beta} are all met whenever $\kappa\geq\kappa_0$ and $q\in (\BA)_{**}$. Moreover, for such $q\in (\BA)_{**}$, we write
\begin{equation}\label{mn abbrev}
m := m(x;\kappa,q) \qtq{and} n := m(x;\vk,q) 
\end{equation}
and demand that $\kappa,\vk\geq \kappa_0(A)$.
\end{convention}

\begin{lemma}[Equicontinuity properties of $m$]
Given $A>0$ and an equicontinuous set $Q\subset \BA$, we have 
\begin{equation} \label{equicty 6}
\lim_{ \kappa\to\infty}\ \sup_{q\in Q}\norm{m}_{H^{s+1}_\kappa} = 0 \qtq{and}
\lim_{ \kappa\to\infty}\ \sup_{q\in Q}\norm{ \mc{L} R(\kappa,q) n}_{H^{s+1}} =0
\end{equation}
for all $\kappa,\vk\geq \kappa_0(A)$ as dictated by Convention~\ref{C:Akmn}.
\end{lemma}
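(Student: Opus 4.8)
The plan is to read off both statements in \eqref{equicty 6} from the quantitative bounds of Propositions~\ref{P:Lax} and~\ref{P:m EU}, combined with the equicontinuity criterion of Lemma~\ref{L:equi}. Throughout, $q$ ranges over $Q\subset\BA\subset(\BA)_{**}$, so Convention~\ref{C:Akmn} ensures that the relevant smallness hypotheses on $\kappa$ (and $\vk$) are in force.

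The first limit is essentially immediate. The first estimate in \eqref{m est} gives $\norm{m}_{H^{s+1}_\kappa}\lesssim\norm{q}_{H^s_\kappa}$; since $Q$ is a bounded, equicontinuous subset of $H^s$, Lemma~\ref{L:equi} yields $\sup_{q\in Q}\norm{q}_{H^s_\kappa}\to0$ as $\kappa\to\infty$, and the first claim follows.

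For the second limit, the key is to express $\mc{L} R(\kappa,q) n$ in terms of $m$ rather than attacking it directly. Since $R(\kappa,q)$ commutes with $\mc L$, and since $\mc L n = q_+ - \vk n$ (that is, $(\mc L+\vk)n=q_+$) while $R(\kappa,q)q_+ = m$, one has the identity
\[
\mc{L} R(\kappa,q) n \;=\; R(\kappa,q)\,\mc L n \;=\; m \;-\; \vk\, R(\kappa,q)\,n .
\]
I would then estimate the two terms separately. Since $\kappa\geq1$, the norm $\norm{\cdot}_{H^{s+1}}$ is dominated by $\norm{\cdot}_{H^{s+1}_\kappa}$, so $\norm{m}_{H^{s+1}}\leq\norm{m}_{H^{s+1}_\kappa}$ tends to $0$ uniformly on $Q$ by the first part. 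For the second term, \eqref{R est} gives $\norm{R(\kappa,q)n}_{H^{s+1}_\kappa}\lesssim\norm{n}_{H^s_\kappa}$; because $s<0$ and $|\xi|+\kappa\geq\kappa$, we have $\norm{n}_{H^s_\kappa}\leq\kappa^{s}\norm{n}_{L^2}$, and \eqref{m est} at the spectral parameter $\vk$ bounds $\norm{n}_{L^2}\lesssim\norm{n}_{H^{s+1}_\vk}\lesssim\norm{q}_{H^s_\vk}\leq\norm{q}_{H^s}\leq A$. Altogether, uniformly for $q\in Q$ (with $\vk$ a fixed admissible parameter),
\[
\snorm{\mc{L} R(\kappa,q) n}_{H^{s+1}} \;\lesssim\; \norm{m}_{H^{s+1}_\kappa} \;+\; \vk\,\kappa^{s}\,A ,
\]
and both summands tend to $0$ as $\kappa\to\infty$ --- the first by the first part, the second because $s<0$.

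I do not anticipate a real obstacle; the only subtlety is the choice of reduction in the last step. The alternative expansion $\mc{L} R(\kappa,q) n = n - \kappa R(\kappa,q) n$ would require establishing that the family $\{\,R(\vk,q)q_+ : q\in Q\,\}$ is itself equicontinuous in $H^{s+1}$, which is true but needs a detour through the resolvent series \eqref{R series} and the high-frequency behavior of $R_0(\vk)$. Routing instead through $\mc{L} R(\kappa,q) n = m - \vk R(\kappa,q) n$ reduces the whole matter to the equicontinuity of $m$ (already proved) together with the cheap resolvent gain $\kappa^{s}\to0$.
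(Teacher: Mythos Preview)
Your proof is correct. The first limit is handled identically to the paper. For the second, your decomposition $\mc L R(\kappa)n = m - \vk R(\kappa)n$ differs slightly from the paper's: there one first writes $R(\kappa)n = R(\vk)m$ (by commuting the two resolvents) and then commutes $\mc L$ past $R(\vk)$ to obtain
\[
\norm{\mc L R(\kappa)n}_{H^{s+1}} = \norm{R(\vk)\mc L m}_{H^{s+1}} \lesssim \norm{\mc L m}_{H^s} \lesssim (1+\|q\|_{H^s})\norm{m}_{H^{s+1}},
\]
which tends to zero by the first claim. The paper's route has the small advantage of giving a bound uniform in $\vk\geq\kappa_0$, whereas your error term $\vk\kappa^{s}A$ carries an explicit $\vk$; for the lemma as stated (with $\vk$ fixed) this is immaterial, and your argument is a perfectly good alternative.
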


\begin{proof}
The first claim in~\eqref{equicty 6} follows immediately from the estimate~\eqref{m est} and the characterization (ii) of equicontinuity from Lemma~\ref{L:equi}.
	
For the second claim in~\eqref{equicty 6}, we write
\begin{equation*}
R(\kappa,q) n = R(\kappa,q) R(\vk,q) q_+ = R(\vk,q) R(\kappa,q) q_+ = R(\vk,q) m.
\end{equation*}
Commuting $\mc{L}$ and $R(\vk,q)$ and using the estimates~\eqref{L est} and~\eqref{R est} for these operators, we find
\begin{align*}
\norm{ \mc{L} R(\kappa,q) n}_{H^{s+1}}
= \norm{ R(\vk,q) \mc{L} m}_{H^{s+1}}
&\lesssim \norm{ \mc{L} m }_{H^{s}}\lesssim (1+\|q\|_{H^s})\norm{ m}_{H^{s+1}} .
\end{align*}
The right-hand side above tends to zero as $\kappa\to \infty$ by the first claim in~\eqref{equicty 6}.
\end{proof}

\begin{proposition} [Dynamics]
For an $H^\infty$ solution $q(t)$ to~\eqref{BO},
\begin{align}
\tfrac{d}{dt} q_+ &= \mc{P} q_+ = - i q_+'' - 2C_+(qq_+)' + 2q_+ q'_+ \label{q_+ dot BO} \\
\tfrac{d}{dt} m &= - im'' - 2C_+([q-q_+] m)' - 2q_+m' \label{m dot BO} \\
\tfrac{d}{dt} \beta(\kappa) &= 0 .\label{beta dot BO} 
\end{align}
Here $\mc P$ is given by \eqref{P defn} and Convention~\ref{C:Akmn} applies.
\end{proposition}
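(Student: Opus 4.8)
The plan is to derive the three identities in order: \eqref{q_+ dot BO} by a direct computation in Fourier variables, and then \eqref{m dot BO} and \eqref{beta dot BO} from it together with the Lax equation $\tfrac{d}{dt}\mc L = [\mc P,\mc L]$. Write $\dot q := \tfrac{d}{dt}q = \h q''-2qq'$. Since $q\in H^\infty$ and \eqref{BO} is well-posed in $H^\infty$, the solution is $C^1$ in time with values in $H^\infty$, so every time derivative below is classical.

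For \eqref{q_+ dot BO}, apply $C_+$ to \eqref{BO}. As $q''$ has no zero-frequency component, $C_+\h q'' = -iq_+''$; and since $2qq'=(q^2)'$ and $C_+$ commutes with $\partial$, it remains to verify the identity $(C_+q^2)' = 2C_+(qq_+)'-2q_+q_+'$. This follows upon decomposing $q$ into its $C_\pm$-parts (and, on the circle, its constant), using that the product of two positive-frequency factors is again positive-frequency and that purely constant contributions are annihilated by the derivative. Hence $\tfrac{d}{dt}q_+ = -iq_+''-(C_+q^2)' = -iq_+''-2C_+(qq_+)'+2q_+q_+' = \mc P q_+$, as claimed.

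Next I would record the Lax equation: for an $H^\infty$ solution and every $f$ in the Hardy space, $\tfrac{d}{dt}\mc L f = -C_+(\dot q\,f) = [\mc P,\mc L]f$. The first equality is immediate from \eqref{L defn}; the second is the classical identity of \cite{Bock1979,Nakamura1979}, which can be checked directly in Fourier variables from \eqref{L defn}, \eqref{P defn}, and \eqref{q_+ dot BO} (using, e.g., $\h q'' = -2iq_+''+iq''$ and that $f$ is positive-frequency). Granting this, differentiate $(\mc L+\kappa)m=q_+$ in time:
\[
(\mc L+\kappa)\tfrac{d}{dt}m = \tfrac{d}{dt}q_+ - \bigl(\tfrac{d}{dt}\mc L\bigr)m = \mc P q_+ + C_+(\dot q\,m),
\]
whereas, using $\mc L m = q_+-\kappa m$ and $[\mc P,\mc L]m = -C_+(\dot q\,m)$,
\[
(\mc L+\kappa)\mc P m = \bigl(\mc P\mc L m-[\mc P,\mc L]m\bigr)+\kappa\mc P m = \mc P q_+ + C_+(\dot q\,m).
\]
Since $\mc L+\kappa$ is invertible (Convention~\ref{C:Akmn}), $\tfrac{d}{dt}m=\mc P m$. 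To see this is \eqref{m dot BO}, write $\mc P m = -im''-2C_+(qm)'+2q_+'m$, split $C_+(qm)=C_+([q-q_+]m)+C_+(q_+m)$, and note that $q_+m$ is a product of positive-frequency functions, so $C_+(q_+m)=q_+m$ and $-2C_+(q_+m)' = -2q_+'m-2q_+m'$; this gives $\mc P m = -im''-2C_+([q-q_+]m)'-2q_+m'$.

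Finally, for \eqref{beta dot BO} I would use the pairing representation $\beta(\kappa)=\langle q_+,m\rangle$ from \eqref{beta} and the two identities just obtained:
\[
\tfrac{d}{dt}\beta(\kappa) = \bigl\langle\tfrac{d}{dt}q_+,\,m\bigr\rangle + \bigl\langle q_+,\,\tfrac{d}{dt}m\bigr\rangle = \langle\mc P q_+,m\rangle+\langle q_+,\mc P m\rangle = 0,
\]
the last cancellation being exactly the anti-selfadjointness of $\mc P$ on the Hardy space, valid because $q\in H^\infty$. The main obstacle is the verification of the Lax equation as an operator identity on the whole Hardy space: although classical, it is a somewhat delicate commutator computation with the nonlocal projection $C_+$. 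One can sidestep it for \eqref{m dot BO} and \eqref{beta dot BO}: having computed $(\mc L+\kappa)\tfrac{d}{dt}m=\mc P q_+ + C_+(\dot q\,m)$, verify \emph{directly} — using only $(\mc L+\kappa)m=q_+$ and the algebra of $C_+$ — that the right-hand side $F$ of \eqref{m dot BO} satisfies the same equation $(\mc L+\kappa)F=\mc P q_+ + C_+(\dot q\,m)$, and then conclude $\tfrac{d}{dt}m=F$ by injectivity of $\mc L+\kappa$; this trades the abstract commutator algebra for a longer but fully explicit manipulation.
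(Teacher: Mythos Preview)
Your proof is correct and follows essentially the same approach as the paper. The paper derives $\tfrac{d}{dt}m=\mc P m$ via the resolvent form $\tfrac{d}{dt}m=[\mc P,R(\kappa)]q_+ + R(\kappa)\mc P q_+$, whereas you compute $(\mc L+\kappa)\tfrac{d}{dt}m$ and $(\mc L+\kappa)\mc P m$ and invoke injectivity; these are the same computation written on the two sides of the inversion, and your treatment of \eqref{q_+ dot BO} and \eqref{beta dot BO} matches the paper's almost verbatim.
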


\begin{proof}
As $q^2 = 2qq_+ - (q_+)^2 +(q-q_+)^2$, so
\begin{equation*}
C_+( 2qq') = C_+\bigl( 2qq_+ - (q_+)^2 +(q-q_+)^2 \bigr)' = 2 C_+(qq_+)' - 2 q_+' q_+ 
\end{equation*}
and consequently,
\begin{align}\label{surprise!}
\mc{P}q_+ = - iq''_+ - 2C_+(qq_+)' + 2q_+'q_+=  C_+\bigl(\h q'' - 2qq'\bigr) = \tfrac{d}{dt} q_+\,.
\end{align}
This proves \eqref{q_+ dot BO}.

By virtue of the Lax pair representation, \eqref{m series}, and \eqref{q_+ dot BO},
\begin{align*}
\tfrac{d}{dt} m &= [\mc{P}, R(\kappa) ] q_+ + R(\kappa) \mc P q_+ = \mc P m  .
\end{align*}
From here, \eqref{m dot BO} follows easily:
\begin{equation*}
\tfrac{d}{dt} m= \mc{P}m = - im'' - 2C_+(qm)' + 2q'_+ m = - im'' - 2C_+([q-q_+] m)' - 2q_+m' .
\end{equation*}

From the final representation in \eqref{beta} and \eqref{q_+ dot BO}, we deduce that
\begin{align*}
\tfrac{d}{dt} \beta(\kappa) &= \bigl\langle \mc P q_+, R(\kappa) q_+\bigr\rangle  + \bigl\langle  q_+, [\mc P, R(\kappa) \:\!] q_+\bigr\rangle
	+ \bigl\langle  q_+, R(\kappa) \mc P q_+\bigr\rangle \\
	&=\bigl\langle \mc P q_+, R(\kappa) q_+\bigr\rangle +\bigl\langle  q_+, \mc P R(\kappa) q_+\bigr\rangle.
\end{align*}
This vanishes because $\mc P$ is an antisymmetric operator on the Hardy space $L^2_+$.  Thus  \eqref{beta dot BO} holds.
\end{proof}

We pause to note that the right-hand side of \eqref{m dot BO} extends continuously (in $H^{-2}$, for example) from $q\in H^\infty$ to $q\in H^s$.  For the first term, this follows from Proposition~\ref{P:m Diff}. For the second, we also apply Lemma~\ref{L:Hs mult}.  In the third term, $q_+$ and $m'$ do not have enough Sobolev regularity to make sense of the product.  Here it is essential that both are holomorphic, which allows us to use Lemma~\ref{L:+*+}.

Employing the Stone--Weierstrass (on a compactified interval $[-E_0,\infty]$) and spectral theorems, it is not difficult to deduce from \eqref{beta dot BO} and \eqref{beta} that for any measurable function $F:\R\to\R$ satisfying
\begin{equation}\label{sc law}
\bigl| F(E) \bigr| \lesssim (1+|E|)^{-1}, \qtq{the functional} q\mapsto
\bigl\langle q_+,F(\mc L) q_+ \bigr\rangle
\end{equation}
defines a conserved quantity for the \eqref{BO} flow.   This is interesting because it provides a clear way of separating out the contribution of any embedded point or singular continuous spectrum to the conserved quantities.  We know of no analogue of this fact in the much-studied KdV equation, for example.

Our next lemma presents other ways in which $m$ and $\beta$ are related, beyond the definition \eqref{beta}. 

\begin{lemma}\label{L:other beta}
Under Convention \ref{C:Akmn},
\begin{align}\label{beta2}
\int \ol{m} n \,dx = \int m \ol{n} \,dx =  \bigl\langle (\mc L+\vk)^{-1} q_+, (\mc L+\kappa)^{-1} q_+ \bigr\rangle = - \frac{\beta(\kappa)-\beta(\vk)}{\kappa-\vk}
\end{align}
for any $q\in \BA$ and distinct $\vk,\kappa\geq\kappa_0(A)$.  In the periodic case, we also have
\begin{align}\label{beta3}
\kappa \int \ol{m} \,dx = \kappa \int m \,dx =  \int qm \,dx + \int q \,dx = \beta(\kappa) + \int q \,dx 
\end{align}
and, writing $1$ for the constant function,
\begin{align}\label{beta4}
\langle 1, (\mc L + \kappa)^{-1} 1\rangle = \kappa^{-1} + \kappa^{-2} \beta(\kappa;q) + \kappa^{-2}\int q\,dx .
\end{align}
\end{lemma}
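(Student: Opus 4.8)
The plan is to obtain the three displays in turn, each by a short computation resting on the defining relation $(\mc L+\kappa)m=q_+$ and on the selfadjointness of $\mc L$ from Proposition~\ref{P:Lax}; throughout, Convention~\ref{C:Akmn} guarantees that the resolvents $R(\kappa)=(\mc L+\kappa)^{-1}$ and $R(\vk)=(\mc L+\vk)^{-1}$ exist and map $H^{-1/2}_+$ into $H^{1/2}_+$.

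For \eqref{beta2}: since $m,n\in H^{s+1}_+\subset L^2$, the expression $\int\ol m\,n\,dx$ is the honest inner product $\langle m,n\rangle$. Writing $m=R(\kappa)q_+$ and $n=R(\vk)q_+$ and moving one selfadjoint resolvent across the pairing gives $\langle m,n\rangle=\langle q_+,R(\kappa)R(\vk)q_+\rangle$; the operator $R(\kappa)R(\vk)$ is selfadjoint, being a product of commuting functions of $\mc L$, so this number is real and therefore also equals $\int m\,\ol n\,dx$. The first resolvent identity $R(\kappa)-R(\vk)=(\vk-\kappa)R(\kappa)R(\vk)$ then yields $\langle q_+,R(\kappa)R(\vk)q_+\rangle=\tfrac1{\vk-\kappa}\bigl(\beta(\kappa)-\beta(\vk)\bigr)=-\tfrac{\beta(\kappa)-\beta(\vk)}{\kappa-\vk}$, as claimed.

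For \eqref{beta3} (periodic case): I would pair the equation $-im'-C_+(qm)+\kappa m=q_+$ against the constant function $1\in C^\infty(\T)$. The term $\langle 1,m'\rangle$ vanishes by periodicity, and, using that $C_+$ is selfadjoint and that on the circle its Fourier multiplier retains the zero mode (so $C_+1=1$ and $\int C_+f\,dx=\int f\,dx$), one finds $\langle 1,C_+(qm)\rangle=\int qm\,dx=\beta(\kappa)$ by \eqref{beta}, and $\langle 1,q_+\rangle=\int q\,dx$. This produces $\kappa\int m\,dx=\beta(\kappa)+\int q\,dx$; taking complex conjugates and using that $\beta(\kappa)$ and $\int q$ are real yields the same value for $\kappa\int\ol m\,dx$. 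For \eqref{beta4} (also periodic), the crucial remark is that on the circle $\mc L 1=-i\,(1)'-C_+(q\cdot 1)=-q_+$, so $(\mc L+\kappa)1=\kappa-q_+$; applying $R(\kappa)$ gives $R(\kappa)1=\kappa^{-1}(1+m)$. Pairing with $1$ and using $\langle 1,1\rangle_{L^2(\T)}=1$ gives $\langle 1,R(\kappa)1\rangle=\kappa^{-1}\bigl(1+\int m\,dx\bigr)$, and substituting $\int m\,dx=\kappa^{-1}\bigl(\beta(\kappa)+\int q\,dx\bigr)$ from \eqref{beta3} produces exactly $\kappa^{-1}+\kappa^{-2}\beta(\kappa;q)+\kappa^{-2}\int q\,dx$.

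I do not expect a substantive obstacle: every step is a one-line identity. The point demanding genuine care is the algebra of the Cauchy--Szeg\H{o} projection in the circle geometry --- that $C_+$ is selfadjoint, that (unlike on the line) it keeps the zero Fourier mode so $C_+1=1$ and $\int C_+f\,dx=\int f\,dx$, and the precise sign $\mc L 1=-q_+$ which makes \eqref{beta4} come out correctly. One should also record that the formal manipulations (integrating the $m$-equation against the constant, applying $R(\kappa)$ to the constant function) are legitimate because $1$ lies in every $H^\sigma_+(\T)$ and $R(\kappa)\colon H^{-1/2}_+\to H^{1/2}_+$ by Proposition~\ref{P:Lax}.
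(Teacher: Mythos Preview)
Your proposal is correct and follows essentially the same approach as the paper: the resolvent identity for \eqref{beta2}, integrating \eqref{m eq} over the circle for \eqref{beta3}, and computing $R(\kappa)1=\kappa^{-1}(1+m)$ for \eqref{beta4}. The only cosmetic difference is that the paper obtains $R(\kappa)1$ via the resolvent identity relative to $\mc L_0$ (using $\mc L_0 1=0$) rather than by directly computing $(\mc L+\kappa)1=\kappa-q_+$ as you do; the two computations are equivalent.
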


\begin{proof}
The identities \eqref{beta2} are evident from the definitions of $m$, $n$ and
$$
(\mc L+\vk)^{-1} (\mc L+\kappa)^{-1} = (\mc L+\kappa)^{-1} (\mc L+\vk)^{-1}
	= \frac{-1}{\kappa-\vk}\bigl[ (\mc L+\kappa)^{-1} - (\mc L+\vk)^{-1}\bigr].
$$

The identities \eqref{beta3} follow by integrating \eqref{m eq} over the circle and using that $\beta(\kappa)$ is real-valued.

As $\mc L_0 1=0$, the resolvent identity gives
\begin{align*}
(\mc L + \kappa)^{-1} 1  =  (\mc L_0 + \kappa)^{-1} 1 + (\mc L + \kappa)^{-1}C_+ q(\mc L_0 + \kappa)^{-1} 1
	= \kappa^{-1} (1 + m) .
\end{align*}
Thus \eqref{beta4} follows from \eqref{beta3}.
\end{proof}

\begin{remark}
In Lemma~\ref{L:prop x}, we will show that $m+\ol m \in L^1(\R)$ when $\langle x\rangle q\in L^2(\R)$.
Mimicking the arguments above yields the following synthesis of \eqref{beta2} and \eqref{beta3}:
\begin{equation}\label{betaX}
\kappa \int m+ \ol{m} +|m|^2 \,dx = 2\beta(\kappa) - \kappa\frac{\partial\beta}{\partial\kappa}+ 2\int q \,dx,
\end{equation}
valid both on the line and on the circle.
\end{remark}

Our next result is an important identity, which first appeared as \cite[Eq.~(58)]{Kaup1998a}.  In that paper, it was used as a stepping stone in the calculation of Poisson brackets between certain scattering-theoretic data, defined for smooth rapidly decreasing $q$.  Our first application of this identity will be to demonstrating Poisson commutativity of $\beta(\kappa)$ at differing spectral parameters.  In subsection~\ref{ss:Bock} we will also see that it provides an important key for unlocking the significance of the Bock--Kruskal transformation.

\begin{lemma}\label{L:id}
For $q\in H^\infty(\R)$ we have
\begin{equation}\label{m ODE 1}
\begin{aligned}
\h (m\ol{n}+m+\ol{n})' + i(m+1)\ol{n}' -im'(\ol{n}+1) - 2q (m+1)(\ol{n}+1)& \\
{}+ i(\kappa - \vk)\h (m\ol{n}+m+\ol{n}) + (\kappa + \vk)(m\ol{n}+m+\ol{n}) &= 0 ,
\end{aligned}
\end{equation}
subject to Convention \ref{C:Akmn}.  For $q\in H^\infty(\T)$, this expression need not vanish; however, it is a real-valued constant function:
\begin{equation}
\text{\upshape LHS\eqref{m ODE 1}} = \vk \int m \,dx + \kappa\int \ol n \,dx .
	\label{m ODE 1T}
\end{equation}
\end{lemma}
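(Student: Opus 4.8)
The plan is to prove \eqref{m ODE 1} and \eqref{m ODE 1T} by a direct substitution of the defining equation \eqref{m eq} for $m$ together with the complex conjugate of the analogous equation for $n$, followed by an elementary accounting of Hardy-space projections. Write $\mu:=m+1$ and $\bar\nu:=\ol n+1$, so that $\mu$ has Fourier support in $[0,\infty)$ and $\bar\nu$ in $(-\infty,0]$, and note that \eqref{m eq} reads $m'=-i\kappa m+iC_+(q\mu)$, while conjugating the corresponding equation for $n$ (using that $q$ is real and $\ol{C_+h}=C_-\ol h$) gives $\ol n'=i\vk\ol n-iC_-(q\bar\nu)$. Since $q\in H^\infty$, all products below are classical, and the substitution immediately produces only $L^2$ quantities, so applying $\h$ is unproblematic even though $\mu,\bar\nu\notin L^2(\R)$.

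The first step is to feed these two identities into $\bigl(m\ol n+m+\ol n\bigr)'=m'\bar\nu+\ol n'\mu$ and into the undifferentiated terms of LHS\eqref{m ODE 1}. Replacing products such as $m\bar\nu$ by $m\ol n+m$, a short manipulation shows that the net contribution of the terms containing $\h$ equals $\h$ applied to $-i\vk m+i\kappa\ol n+i\bar\nu C_+(q\mu)-i\mu C_-(q\bar\nu)$, while the contribution of the terms without $\h$ is $\kappa\ol n+\vk m+\bar\nu C_+(q\mu)+\mu C_-(q\bar\nu)-2q\mu\bar\nu$. On the line, I would then use $\h=-i(C_+-C_-)$ (so that $\h m=-im$, $\h\ol n=i\ol n$, since $m,\ol n$ are supported on half-lines) to evaluate the first piece; the terms linear in $m$ and $\ol n$ cancel against those of the second piece, and one is left with
\begin{equation*}
\text{LHS}\eqref{m ODE 1}=2\Bigl[C_+\!\bigl(\bar\nu\,C_+(q\mu)\bigr)+C_-\!\bigl(\mu\,C_-(q\bar\nu)\bigr)-q\mu\bar\nu\Bigr].
\end{equation*}
To see the bracket vanishes, use the elementary Toeplitz-type facts $C_+(\bar\nu\,h)=C_+\!\bigl(\bar\nu\,C_+h\bigr)$ (because $\bar\nu\,C_-h$ has Fourier support in $(-\infty,0]$) and, symmetrically, $C_-(\mu\,h)=C_-\!\bigl(\mu\,C_-h\bigr)$. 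Taking $h=q\mu$ in the first and $h=q\bar\nu$ in the second gives $C_+(q\mu\bar\nu)=C_+\!\bigl(\bar\nu\,C_+(q\mu)\bigr)$ and $C_-(q\mu\bar\nu)=C_-\!\bigl(\mu\,C_-(q\bar\nu)\bigr)$; adding and using $C_++C_-=\Id$ on the line (see \eqref{Cid}) shows the bracket is $0$, which is \eqref{m ODE 1}.

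For the circle one runs the identical computation, but now carries along the zero-frequency corrections. On $\T$ we have $\h m=-i(m-\int m)$ and $\h\ol n=i(\ol n-\int\ol n)$ because $\sgn(0)=0$; the relation $C_++C_-=\Id+\int$ replaces $C_++C_-=\Id$; and the product of two functions with Fourier support in $(-\infty,0]$ (resp. $[0,\infty)$) is no longer annihilated by $C_+$ (resp. $C_-$)---its image is the constant function equal to its mean. Carrying these through, the bracket above is replaced by $\vk\int m+\kappa\int\ol n$ together with a collection of mean-value terms (such as $\int q\mu\bar\nu$, $\int C_-(q\mu)\bar\nu$, $\int\mu C_+(q\bar\nu)$, $(\int q\mu)(\int\bar\nu)$, and $(\int\mu)(\int q\bar\nu)$); rewriting $q\mu$ and $q\bar\nu$ via $h=C_+h+C_-h-\int h$ and using that only the zero mode survives in products like $\int C_-(q\mu)\bar\nu=(\int q\mu)(\int\bar\nu)$ collapses this collection to zero, yielding \eqref{m ODE 1T}. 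As a consistency check, $\vk\int m+\kappa\int\ol n$ is real in view of \eqref{beta3}.

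I expect the only genuine labor to be this last step. The line identity is short and every cancellation is transparent; the obstacle on the circle is purely bookkeeping---one must track every zero mode produced by $\h$, by $C_++C_-$, and by the means of products of one-sided functions, and verify that the leftover is exactly $\vk\int m+\kappa\int\ol n$ rather than some other constant.
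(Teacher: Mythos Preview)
Your approach is correct and essentially the same as the paper's: substitute \eqref{m eq} and its conjugate, then use Hardy-space support considerations to kill the residual terms. The one organizational difference worth noting is that the paper groups terms using the operators $1\pm i\h$ rather than $C_\pm$, writing
\[
\text{LHS}\eqref{m ODE 1}=\kappa(1+i\h)\ol n+\vk(1-i\h)m+(1+i\h)\bigl[\bar\nu\,[C_+-1](q\mu)\bigr]+(1-i\h)\bigl[\mu\,[C_--1](q\bar\nu)\bigr].
\]
Because $[C_+-1](q\mu)$ has strictly negative Fourier support and $\bar\nu$ has nonpositive support, the third term is annihilated by $1+i\h$ in \emph{both} geometries (and symmetrically for the fourth), so the entire distinction between $\R$ and $\T$ is isolated in the two linear terms $\kappa(1+i\h)\ol n+\vk(1-i\h)m$. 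This makes the circle case a one-line observation rather than the multi-term zero-mode bookkeeping you outline; your route works, but the paper's packaging avoids that labor.
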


\begin{proof}
Employing equation~\eqref{m eq} to eliminate $m'$ and $\ol{n}'$, we obtain
\begin{align*}
\text{LHS}\eqref{m ODE 1}
={} &\kappa ( 1+i\h )\ol{n} + \vk ( 1 -i\h )m + (1+i\h) \big[ (\ol{n}+1)C_+(q(m+1)) \big] \\
&+  (1-i\h)  \big[ (m+1)C_-(q(n+1)) \big] - 2q(m+1)(\ol{n}+1) .
\end{align*}
Thence, using the operator identity $2=(1+i\h)+(1-i\h)$ on the last term yields
\begin{align*}
\text{LHS}\eqref{m ODE 1}
={} &\kappa ( 1+i\h )\ol{n} + \vk ( 1 -i\h )m + (1+i\h) \big[ (\ol{n}+1)[C_+-1] (q(m+1)) \big] \\
&+  (1-i\h)  \big[ (m+1)[C_- -1] (q(n+1)) \big] .
\end{align*}
Consideration of the Fourier supports shows that the last two terms vanish in either geometry.  The first two terms vanish on the line but reduce to RHS\eqref{m ODE 1T} in the circle case.  The fact that this constant is real (and generically nonzero) follows from \eqref{beta3}.
\end{proof}

\begin{lemma}\label{L:Poisson 2}
Under Convention \ref{C:Akmn},
\begin{equation}\label{poisson 2}
\{ \beta(\kappa) , \beta(\vk) \} = 0 \qtq{and} \{ P , \beta(\vk) \} = 0 
\end{equation}
as functions on  $\BA$ and $\BA\cap H^\infty$, respectively.
\end{lemma}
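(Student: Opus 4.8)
The plan is to establish $\{\beta(\kappa),\beta(\vk)\}=0$ first for $q\in H^\infty$ and then extend to all of $\BA$ by continuity, and to dispatch $\{P,\beta(\vk)\}$ by translation invariance. For the extension: \eqref{dbeta} exhibits $\del\beta(\kappa)/\del q=m+\ol m+|m|^2$ as an element of $H^{s+1}$ (the product $|m|^2$ lying there by \eqref{high reg alg}), so $\bigl(\del\beta(\vk)/\del q\bigr)'\in H^s$ and the defining integral $\{\beta(\kappa),\beta(\vk)\}=\int\bigl(\del\beta(\kappa)/\del q\bigr)\bigl(\del\beta(\vk)/\del q\bigr)'\,dx$ is a continuous pairing of $H^{s+1}$ against $H^s$, which is legitimate because $s>-\tfrac12$ forces $H^{s+1}\hookrightarrow H^{-s}$; since $q\mapsto m$ is continuous from $H^s$ to $H^{s+1}$ by Proposition~\ref{P:m EU}, this makes $q\mapsto\{\beta(\kappa),\beta(\vk)\}$ continuous on $\BA$, and $H^\infty\cap\BA$ is dense there (mollification does not increase the $H^s$ norm). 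The bracket $\{P,\beta(\vk)\}=\int q\,\bigl(\del\beta(\vk)/\del q\bigr)'\,dx$ pairs $q\in H^s$ against $H^s$ and so is only meaningful for smoother $q$ — hence the restriction to $\BA\cap H^\infty$ — and there it vanishes for a simple reason: $J\nabla P=q'$, so $P$ generates the spatial translation flow, while \eqref{m id} (a consequence of the uniqueness in Proposition~\ref{P:m EU}) gives $\beta(\vk;q(\cdot+h))=\int q(x+h)\,m(x+h;\vk,q)\,dx=\beta(\vk;q)$, and differentiating at $h=0$ yields $\{P,\beta(\vk)\}=0$.

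The substance is $\{\beta(\kappa),\beta(\vk)\}=0$ for $q\in H^\infty$, where Lemma~\ref{L:id} enters; the case $\kappa=\vk$ being automatic from the antisymmetry of the bracket, assume $\kappa\neq\vk$. Write $u=1+m$, $v=1+n$, so that $\del\beta(\kappa)/\del q=u\ol u-1$, $\del\beta(\vk)/\del q=v\ol v-1$, and set $B:=u\ol v-1=m\ol n+m+\ol n$; all of $u\ol u-1$, $v\ol v-1$, $B$ lie in $H^{s+1}$ and hence decay, whereas $u,v$ themselves tend to $1$ — a distinction that must be respected lest spurious divergences appear on the line. The identity \eqref{m ODE 1}, taken with this $B$, solves the cross-Wronskian term $i\bigl(u\ol v'-u'\ol v\bigr)$ in terms of $\h B'$, $\h B$, $q\,u\ol v$, $B$, and a complete derivative; its complex conjugate is, by the relation $\ol B=\ol u v-1$, precisely the same identity with $\kappa$ and $\vk$ interchanged. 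The plan is to substitute both forms into $\{\beta(\kappa),\beta(\vk)\}=\int(u\ol u-1)(v\ol v-1)'\,dx$ — after an integration by parts and a careful splitting into the decaying pieces — and then to check that the Hilbert-transform terms cancel between the two orderings of $(\kappa,\vk)$ or vanish by frequency support, that the $q\,u\ol v$ terms likewise cancel, and that the $(\kappa\pm\vk)B$ terms collapse, via \eqref{m eq} and \eqref{beta2}, into combinations of $\beta(\kappa)$, $\beta(\vk)$, and quantities like $\int m\ol n\,dx$ that are symmetric under $\kappa\leftrightarrow\vk$; what is left over is a genuine total derivative. Thus $\{\beta(\kappa),\beta(\vk)\}$ equals an expression symmetric in $(\kappa,\vk)$, and being also antisymmetric it must vanish. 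On the circle one must additionally carry the real constant $\vk\int m\,dx+\kappa\int\ol n\,dx$ from \eqref{m ODE 1T}; being constant, it produces only boundary-type terms which, using \eqref{beta3}, cancel between the $(\kappa,\vk)$ and $(\vk,\kappa)$ versions.

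I expect the main obstacle to be organizational: choosing how to symmetrize and decompose $\int(u\ol u-1)(v\ol v-1)'\,dx$ before substituting the two instances of \eqref{m ODE 1}, so that every surviving term is recognizably a total derivative or symmetric under $\kappa\leftrightarrow\vk$, while keeping all intermediate quantities within $H^{s+1}$ on the line. The subsidiary points — justifying the integrations by parts and the vanishing of the same-sign-frequency pairings $\int(\text{positive frequency})(\text{positive frequency})\,dx$ on $\R$ — are routine once one notes that functions in $H^{s+1}_+$ carry no zero-frequency component.
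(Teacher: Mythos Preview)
Your proposal and the paper share the same backbone: both identify Lemma~\ref{L:id} (the identity \eqref{m ODE 1}) as the key algebraic input for $\{\beta(\kappa),\beta(\vk)\}=0$, and both recognize that passing from $H^\infty$ to all of $\BA$ is a routine continuity step (your justification via the $H^{s+1}$--$H^s$ pairing is correct). Where the two diverge is in organization. The paper antisymmetrizes first, writing $\{\beta(\kappa),\beta(\vk)\}=\tfrac12\int F\,dx$ with $F=(|m|^2+m+\ol m)(|n|^2+n+\ol n)'-(\cdots)'(\cdots)$, and then splits $F=G+\ol G+K+\ol K$ into two explicit pairs. After substituting \eqref{m ODE 1} into the $G$-pair and using \eqref{m eq} directly in the $K$-pair, each collapses (via \eqref{beta}, \eqref{beta2}, \eqref{beta3}) to $\pm 2i\int q(m\ol n-\ol m n)\,dx$, and these cancel on the nose. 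Your plan --- substitute \eqref{m ODE 1} and its conjugate and argue the residue is symmetric under $\kappa\leftrightarrow\vk$, hence zero by antisymmetry of the bracket --- is sound in principle, but you have not actually carried it out; the ``organizational obstacle'' you anticipate is real, and the paper's $G,K$ decomposition is precisely one clean resolution of it. Note in particular that the two surviving pieces in the paper's computation are each \emph{antisymmetric} in $(\kappa,\vk)$, not symmetric, so the endgame is exact cancellation rather than your symmetry-forces-zero mechanism.

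For $\{P,\beta(\vk)\}=0$ your translation-invariance argument via \eqref{m id} is correct and more conceptual than the paper's route, which computes the bracket directly, integrates by parts, and uses \eqref{m eq} for $n'$ together with \eqref{beta} to see the vanishing.
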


\begin{proof}
By \eqref{Poisson}, \eqref{dbeta}, and integration by parts, 
\begin{equation*}
\{ \beta(\kappa),\beta(\vk) \}
= \int \big( |m|^2 + m + \ol m \big) \big( |n|^2 + n + \ol n \big)'\,dx
= \tfrac12\!\int F(x)\,dx ,
\end{equation*}
where we adopt the notation
\begin{align}
F:= \bigl[ |m|^2 + m + \ol{m}\bigr] \bigl[|n|^2 + n + \ol{n}\bigr]'
	- \bigl[ |m|^2 + m + \ol{m}\bigr]' \bigl[|n|^2 + n + \ol{n}\bigr].
\end{align}
Proposition~\ref{P:m EU} shows that these expressions are all well-defined on $\BA$.

To continue, we rewrite $F$ as 
\begin{align*}
F=G+\ol{G}+K+\ol{K} \qtq{where} G &=  [ \ol{m}n + \ol{m} + n] \bigl[ (m+1)\ol{n}' - m'(\ol{n}+1)  \bigr]\\
\qtq{and} K &= (m -n)(\ol{m}+\ol{n})' . 
\end{align*}
We split $F$ in this way in order to take advantage of Lemma~\ref{L:id}, which shows that
\begin{equation}\label{m ODE 2222}
\begin{aligned} 
(m+1)\ol{n}' - m'(\ol{n}+1)  &= i \h (m\ol{n}+m+\ol{n})' -(\kappa - \vk)\h (m\ol{n}+m+\ol{n}) \\
&\quad{}  + i (\kappa + \vk - 2 q)(m\ol{n}+m+\ol{n})  - 2 i q - i c ,
\end{aligned}
\end{equation}
where the constant function $c$ denotes the value of LHS\eqref{m ODE 1} appropriate to each geometry. Recall that $c=0$ on $\R$ and is real on $\T$. Combining this identity with the antisymmetry of $\h$ and of $i\h \partial$, we find that
\begin{align}
\int\! (G + \overline G) \, dx = i\int (2q+c) [ m\ol{n} - \ol{m}n + m - \ol{m} - n + \ol{n}]   \,dx.
\end{align}
Using \eqref{beta}, \eqref{beta2}, and \eqref{beta3}, this further simplifies to
\begin{align}\label{G part}
\int\! (G + \overline G) \, dx = 2i \int q [ m\ol{n} - \ol{m}n]   \,dx.
\end{align}

On the other hand, integrating by parts and employing \eqref{m eq}, we obtain
\begin{align*}
\int (K + \ol{K})  \, dx &= 2 \int m \ol{n}' + \ol{m} n' \,dx \\
&= 2i \int (\vk - q) [m \ol{n} - \ol{m} n] \,dx - 2 i \int q[m-\ol{m}]\,dx.
\end{align*}
Using \eqref{beta} and \eqref{beta2}, this simplifies to
\begin{align}\label{K part}
\int (K + \ol{K})  \, dx &= - 2i \int q [m \ol{n} - \ol{m} n] \,dx .
\end{align}

Combining \eqref{G part} and \eqref{K part} gives $\int F =0$ and so proves the first identity in \eqref{poisson 2}.

To prove the commutativity of $\beta(\vk)$ and the momentum $P=\frac12 \int q^2\,dx$, we use the functional derivative~\eqref{dbeta} for $\beta$ to compute
\begin{align*}
&\{\beta(\vk),P(q)\}
= \int \big( |n|^2 + n + \ol n \big) q'\, dx
= \int - [(1+\ol n)n' + (1+n)\ol n']q\, dx.
\end{align*}
Next, we use the equation~\eqref{m eq} for $n'$ together with \eqref{beta} to deduce that
\begin{align*}
\{\beta(\vk),P(q)\} &= i \int \vk[ n(\ol n+1)-\ol n(n+1) ]q \,dx \\
&\qquad - i \int (\ol n+1)q \cdot C_+ (n+1)q - (n+1)q \cdot C_- (\ol n+1)q \,dx \\
& =0.\qedhere
\end{align*}
\end{proof}

\subsection{The Bock--Kruskal transformation}\label{ss:Bock}
In~\cite{Bock1979}, Bock and Kruskal introduced an analogue of the Miura transform applicable to the Benjamin--Ono equation and used this to show the existence of infinitely many conserved quantities, at least for smooth solutions decaying sufficiently rapidly at (spatial) infinity.  This transformation $q\mapsto w$ was defined implicitly via the formula
\begin{equation}\label{BKeqn}
2q = \tfrac{1}{w+\kappa} \h (w') + \h \big( \tfrac{w'}{w+\kappa} \big) + \tfrac{2\kappa w}{w+ \kappa} .
\end{equation}
The function $w$ is real-valued.  As in the original paper \cite{Bock1979}, we will confine our discussion to the $\R$ geometry.

In the introduction, we described the important inspirational role that the Bock--Kruskal transformation played in developing the methods ultimately employed in this paper.  Given this pivotal role, we feel compelled to share with the reader how it connects to the principal themes of this paper.  Concretely, we will demonstrate the unique solvability of \eqref{BKeqn} and identify this solution in terms of the central object $m(x;\kappa,q)$ of this section.

Evidently, some restriction on $w$ (beyond mere regularity) must be imposed to handle the denominators $\kappa+w$ appearing in \eqref{BKeqn}.  As any  $w\in H^{s+1}$ is automatically continuous and converges to zero at (spatial) infinity, the natural condition is this:
\begin{equation}\label{1+w>0}
\inf_x  \bigl(\kappa+w(x)\bigr) > 0.
\end{equation}

\begin{theorem}\label{T:BK}
Suppose $A>0$ and $\kappa_0(A)$ satisfies Convention~\ref{C:Akmn}.  Then, for any $q\in \BA$ and any $\kappa\geq \kappa_0$,
\begin{equation}\label{m w}
w = \kappa \tfrac{\del\beta}{\del q} = \kappa \big( |m|^2+ m + \ol m \big)
\end{equation}
is the unique $H^{s+1}(\R)$ solution to \eqref{BKeqn} satisfying \eqref{1+w>0}.
\end{theorem}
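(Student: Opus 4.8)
The strategy is to translate \eqref{BKeqn} into the identity of Lemma~\ref{L:id}. The bridge is the following purely algebraic fact, valid for every $\phi\in H^{s+1}_+$ (with $q\in H^s$, $\kappa\geq\kappa_0$): writing $E:=(\mc L+\kappa)\phi-q_+\in H^s_+$, one has
\begin{equation*}
\h(\phi\ol\phi+\phi+\ol\phi)'+i(1+\phi)\ol\phi'-i(1+\ol\phi)\phi'-2q(1+\phi)(1+\ol\phi)+2\kappa(\phi\ol\phi+\phi+\ol\phi)=4\Re C_+\bigl[E(1+\ol\phi)\bigr].
\end{equation*}
(Equivalently the left side equals $(1+i\h)[E(1+\ol\phi)]+(1-i\h)[(1+\phi)\ol E]$, which collapses as stated on $\R$.) I would prove this by mimicking the computation in the proof of Lemma~\ref{L:id}, but with \eqref{m eq} replaced by the relation $\phi'=-i\kappa\phi+iC_+(q(\phi+1))+iE$ and its conjugate; the part of the computation not involving $E$ is exactly what appears there and vanishes by the Fourier-support arguments on the line, while the remaining terms assemble into the right-hand side. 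All products occurring are legitimate by Lemmas~\ref{L:Hs mult} and~\ref{L:+*+}.

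Granting this, existence is quick. Put $w_0:=\kappa(|m|^2+m+\ol m)=\kappa(|1+m|^2-1)\in H^{s+1}(\R)$, which is real-valued (using \eqref{m est} and the algebra bound \eqref{high reg alg} for $|m|^2$); since $\|m\|_{L^\infty}<1$ by \eqref{m est}, we get $w_0+\kappa=\kappa|1+m|^2>0$, i.e.\ \eqref{1+w>0}. Because $m\in H^{s+1}_+\subset L^2_+$ its holomorphic extension is bounded by $\|m\|_{L^\infty(\R)}<1$, so $1+m$ is nonvanishing in the upper half-plane and $v:=1/(1+m)\in L^\infty$; from $v-1=-mv\in L^2$ and analyticity, $v-1\in L^2_+$, hence $\tfrac{m'}{1+m}=m'+m'(v-1)$ is a positive-frequency distribution (the product controlled by Lemma~\ref{L:+*+}), so $\h$ acts on it as $-i$. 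A short computation then gives $\tfrac{w_0'}{w_0+\kappa}=\tfrac{m'}{1+m}+\tfrac{\ol m'}{1+\ol m}$ and $(w_0+\kappa)\h\bigl(\tfrac{w_0'}{w_0+\kappa}\bigr)=i\kappa[(1+m)\ol m'-(1+\ol m)m']$. Inserting this and $w_0=\kappa(|m|^2+m+\ol m)$ into \eqref{BKeqn} multiplied by $w_0+\kappa$ converts it into the vanishing of the left-hand side of the displayed identity with $\phi=m$; but then $E=0$, so that left side indeed vanishes, and $w_0$ solves \eqref{BKeqn}.

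For uniqueness, let $w\in H^{s+1}(\R)$ (necessarily real-valued) solve \eqref{BKeqn} and \eqref{1+w>0}. Set $\ell:=\log\bigl(\tfrac{w+\kappa}{\kappa}\bigr)\in H^{s+1}$ (vanishing at infinity), $U:=\exp(C_+\ell)$, $\phi:=U-1\in H^{s+1}_+$. Then $U\ol U=\exp(\ell)=\tfrac{w+\kappa}{\kappa}$, so $w=\kappa(|1+\phi|^2-1)=\kappa(|\phi|^2+\phi+\ol\phi)$, while $\tfrac{U'}{U}=(C_+\ell)'\in H^s_+$ is positive-frequency and $\tfrac{1}{1+\phi}=\exp(-C_+\ell)\in 1+H^{s+1}_+$. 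Running the reduction of the previous paragraph in reverse (again the positive-frequency property of $U'/U$ is what is used), \eqref{BKeqn} for this $w$ is equivalent to the vanishing of the left-hand side of the displayed identity for this $\phi$. Hence $\Re C_+[E(1+\ol\phi)]=0$; since $C_+[E(1+\ol\phi)]$ is positive-frequency while its conjugate is negative-frequency, this forces $C_+[E(1+\ol\phi)]=0$, i.e.\ $E(1+\ol\phi)$ is negative-frequency. Multiplying by $\tfrac1{1+\ol\phi}\in 1+H^{s+1}_-$ keeps it negative-frequency, so $E$ itself is negative-frequency; as $E\in H^s_+$ too, $E=0$. Thus $(\mc L+\kappa)\phi=q_+$, so $\phi=m$ by Proposition~\ref{P:m EU}, and $w=w_0$.

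The main obstacle is the displayed identity, i.e.\ the ``reverse'' of Lemma~\ref{L:id}: one must track every occurrence of $\phi'$ and $\ol\phi'$ (including those hidden inside $\h(\phi\ol\phi+\phi+\ol\phi)'$), verify that the $E$-free part collapses exactly as in the original proof, and remain attentive to the harmless zero-frequency discrepancy in the identifications $(1\pm i\h)=2C_\pm$ on $\R$ and to the meaningfulness of every low-regularity product---precisely the role of Lemmas~\ref{L:Hs mult} and~\ref{L:+*+}.
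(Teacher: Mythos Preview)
Your proof is correct and follows essentially the same route as the paper: both use the Wiener--Hopf factorization $w+\kappa=\kappa|1+\phi|^2$ with $\phi\in H^{s+1}_+$, reduce \eqref{BKeqn} to the identity of Lemma~\ref{L:id} (your version carrying the defect $E=(\mc L+\kappa)\phi-q_+$ is exactly the content of the paper's derivation of \eqref{BKeqn'}--\eqref{BKeqn'''}), and conclude $\phi=m$ by the frequency-support argument and Proposition~\ref{P:m EU}. Your explicit construction of the factorization via $\phi=\exp(C_+\log\tfrac{w+\kappa}{\kappa})-1$ is simply the standard realization of what the paper states abstractly in \eqref{WienerHopf}.
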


\begin{proof}
By virtue of \eqref{m est}, we must have $\|m\|_{L^\infty}<1$. Consequently, the function $\kappa\tfrac{\del\beta}{\del q}= \kappa|m+1|^2-\kappa$ satisfies \eqref{1+w>0}. Setting $\kappa=\vk$ in \eqref{m ODE 1} and dividing by $|m+1|^2$, we find that
\begin{align*}
2q&= \tfrac{\h(|m|^2+m+\ol m)'}{|m+1|^2} + i\Bigl[ \tfrac{\ol{m}'}{\ol m +1} - \tfrac{m'}{m+1} \Bigr] + 2\kappa \tfrac{|m|^2+m+ \ol m}{|m+1|^2}\\
&=\tfrac{\h(|m|^2+m+\ol m)'}{|m+1|^2} + i\h\Bigl[ \tfrac{\ol{m}'}{\ol m +1} + \tfrac{m'}{m+1} \Bigr]+ 2\kappa \tfrac{|m|^2+m+ \ol m}{|m+1|^2},
\end{align*}
which demonstrates that the function $\kappa\tfrac{\del\beta}{\del q}$ satisfies \eqref{BKeqn}. 

It remains to verify the uniqueness of $H^{s+1}$ solutions to \eqref{m w} satisfying \eqref{1+w>0}. We will focus on the unknown $u = \kappa^{-1} w$.  Suppose first that $w$ is a solution of the type described.  The restriction \eqref{1+w>0} guarantees that $\log(1+u)\in H^{s+1}$; see, for example, \eqref{3:18}.  Thus, we may factor
\begin{align}\label{WienerHopf}
1 + u(x) = [1+\mu(x)] [1+\ol\mu(x)] \qtq{with} \mu \in H^{s+1}_+(\R).
\end{align}

The next step is to insert $w=\kappa[1+\mu][1+\ol\mu]- \kappa$ in \eqref{BKeqn}. In doing so, we take advantage of the following:
\begin{align*}
(1+u) \h \big[ \tfrac{u'}{1+u} \big] = 
	|\mu+1|^2 \h \big( \tfrac{\ol\mu'}{1+\ol\mu} + \tfrac{\mu'}{1+\mu} \big) =
	i(1+\mu)\ol\mu' - i\mu' (\ol\mu+1) .
\end{align*}
This allows us to completely eliminate the denominators in \eqref{BKeqn}; indeed, combining this with $2C_\pm=[I\pm i\h]$, we find the equivalent formulation
\begin{equation}\label{BKeqn'}
2q [1+\mu] [1+\ol\mu]  = 2C_-\bigl[ i(1+\mu)\ol\mu'\bigr] - 2C_+\bigl[ i\mu' (\ol\mu+1) \bigr] + 2\kappa [\mu+\ol\mu+|\mu|^2] .
\end{equation}

Isolating the positive-frequency component of \eqref{BKeqn'}, we get
\begin{equation}\label{BKeqn''}
C_+\Bigl[ (1+\ol\mu)\bigl( - i\mu' - C_+(q\mu) + \kappa\mu - q_+ \bigr) \Bigr] =0 .  
\end{equation}
In fact, this is equivalent to \eqref{BKeqn'} because the negative-frequency component is simply the complex conjugate of this.

Let us write $f$ for the quantity inside the square brackets of \eqref{BKeqn''}.   By Lemma~\ref{L:Hs mult}, we know $f\in H^s(\R)$.  Thus we may interpret \eqref{BKeqn''} as saying that $f$ belongs to the Hardy--Sobolev space $H^s_-$, which in turn shows
\begin{equation}\label{BKeqn'''}
 - i\mu' - C_+(q\mu) + \kappa\mu - q_+ = \tfrac{f}{1+\ol\mu} \in H^s_-(\R) .  
\end{equation}
However every term in LHS\eqref{BKeqn'''} belongs to the \emph{other} Hardy--Sobolev space $H^s_+(\R)$.  Only the zero function belongs to both spaces and so we deduce that $\mu$ is a solution of \eqref{m eq}.  However, Proposition~\ref{P:m EU} guarantees that $m$ is the only solution of this equation. Thus $\mu=m$, which then yields $w=\kappa u = \kappa  \tfrac{\del\beta}{\del q}$.
\end{proof}

The Bock--Kruskal approach to conservation laws is that $w$ is a conserved density and consequently, its formal expansion in powers of $\kappa^{-1}$ provides an infinite family of conservation laws of polynomial type.  Combining \eqref{m w} with \eqref{betaX} allows us to connect this approach to the conservation of $\beta(\kappa)$. Concretely, for $\langle x\rangle q \in L^2(\R)$,
\begin{equation}\label{BK as m}
\int w(x;\kappa, q) \,dx = 2\beta(\kappa) - \kappa\frac{\partial\beta}{\partial\kappa}+ 2\int q \,dx .
\end{equation}

\subsection{The perturbation determinant}\label{ss:PD}
Our next result establishes the connection between our gauge $m$ and
the logarithm of the renormalized perturbation determinant
\begin{align}\label{alpha}
\alpha(\kappa;q):= \sum_{\ell\geq 2} \tfrac1\ell\tr\bigl\{
(R_0(\kappa)C_+ q)^\ell \bigr\},
\end{align}
which is the central object in Talbut's proof of Theorem~\ref{T:Talbut} in \cite{Talbut2021a}.  Such a connection in the line setting was presented by Talbut in his thesis \cite[\S3.3]{Talbut2021}.

On the line, convergence of the series \eqref{alpha} may be demonstrated as follows:  For $A>0$ and $\kappa_0=\kappa_0(A)$ chosen according to
Convention~\ref{C:Akmn}, we have
\begin{align*}
\|\sqrt{R_0(\kappa)}C_+ q\sqrt{R_0(\kappa)}\|_{\textrm{HS}}^2
&= \frac{1}{2\pi} \int_0^\infty\!\!\int_0^\infty  \frac{|\widehat
q(\xi-\eta)|^2\,d\eta\,d\xi}{(\eta+\kappa)(\xi+\kappa)}\\
&=\frac{1}{2\pi} \int_\R \frac{\log(1+
\frac{|\xi|}\kappa)}{|\xi|}|\widehat q(\xi)|^2\,d\xi \lesssim
\kappa^{-4\eps} \|q\|_{H^s_\kappa}^2<1,
\end{align*}
whenever $\kappa\geq \kappa_0$ and $q\in \BA$.  In particular, the
H\"older inequality in Schatten classes yields convergence of the
series defining $\alpha$.  Parallel arguments yield convergence in the
circle setting.

\begin{lemma} For $A>0$ and $\kappa_0=\kappa_0(A)$ satisfying
Convention~\ref{C:Akmn}, we have
\begin{align*}
\alpha(\kappa;q)= \tfrac1{2\pi}\int_0^\infty
\tfrac{\beta(\kappa+\xi;q)}{\kappa+\xi}\, d\xi \quad\text{on $\R$}
\qquad\text{and} \qquad \alpha(\kappa;q)= \sum_{\xi\in 2\pi
\Z_+}\tfrac{\beta(\kappa+\xi;q)}{\kappa+\xi} \quad\text{on $\T$},
\end{align*}
whenever $q\in \BA$ and $\kappa\geq \kappa_0$.  Here, $\Z_+=\{0, 1, 2, \ldots\}$.
\end{lemma}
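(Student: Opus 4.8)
The idea is to relate both sides directly through their series expansions in powers of the potential $q$ and the resolvent $R_0(\kappa)$. On the left, $\alpha(\kappa;q) = \sum_{\ell\geq 2}\tfrac1\ell\tr\{(R_0(\kappa)C_+q)^\ell\}$ by definition \eqref{alpha}. On the right, I would start from the geometric series \eqref{beta as geo}, or more precisely from $\beta(\vk;q) = \langle q_+, (\mc L+\vk)^{-1} q_+\rangle$ together with the resolvent expansion \eqref{R series}: for $\vk\geq\kappa\geq\kappa_0$,
\begin{equation*}
\beta(\vk;q) = \sum_{\ell\geq 1} (-1)^{\ell-1} \bigl\langle q_+, R_0(\vk)\bigl[C_+q R_0(\vk)\bigr]^{\ell-1} q_+\bigr\rangle
	= \sum_{\ell\geq 1}(-1)^{\ell-1}\tr\bigl\{ (R_0(\vk)C_+q)^\ell\bigr\},
\end{equation*}
where in the last step I use that $\langle q_+, R_0 [C_+qR_0]^{\ell-1} q_+\rangle = \tr\{(R_0 C_+q)^\ell\}$ via cyclicity of the trace and $q_+ = C_+ q$ acting on $1$ — more carefully, writing the inner product as a trace of a rank-one operator composed with the Toeplitz/resolvent string. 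The convergence of these traces is precisely what the Hilbert--Schmidt bound displayed just before the lemma guarantees (splitting each $R_0$ into $\sqrt{R_0}\cdot\sqrt{R_0}$ and applying the Schatten Hölder inequality).

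Next I would integrate (respectively sum) this expansion term by term. On the line,
\begin{equation*}
\tfrac1{2\pi}\int_0^\infty \tfrac{\beta(\kappa+\xi;q)}{\kappa+\xi}\,d\xi
	= \sum_{\ell\geq 1}(-1)^{\ell-1} \tfrac1{2\pi}\int_0^\infty \tfrac{1}{\kappa+\xi}\tr\bigl\{(R_0(\kappa+\xi)C_+q)^\ell\bigr\}\,d\xi,
\end{equation*}
and the point is that $\tfrac1{2\pi}\int_0^\infty \tfrac{1}{\kappa+\xi} R_0(\kappa+\xi)^{\otimes \ell}\,d\xi$, when paired against the fixed operator $(C_+q)^{\otimes\ell}$ in the trace, reproduces $\tfrac1\ell R_0(\kappa)^{\otimes\ell}$. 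Concretely, in Fourier variables the $\ell$-fold trace $\tr\{(R_0(\vk)C_+q)^\ell\}$ is an integral over frequencies $\eta_1,\dots,\eta_\ell\geq 0$ of $\prod_j \tfrac{\wh q(\eta_j-\eta_{j+1})}{\eta_j+\vk}$, so with $\vk = \kappa+\xi$ the weight is $\tfrac1{\kappa+\xi}\prod_j (\eta_j+\kappa+\xi)^{-1}$; integrating $\int_0^\infty \tfrac{d\xi}{(\kappa+\xi)\prod_j(\eta_j+\kappa+\xi)}$ and comparing with the elementary identity
\begin{equation*}
\int_0^\infty \frac{dt}{t\prod_{j=1}^\ell(a_j+t)} \ \text{versus}\ \frac{1}{\ell}\prod_{j=1}^\ell a_j^{-1}
\end{equation*}
(after shifting $t = \kappa+\xi$ and $a_j = \eta_j$, with the spurious $1/t$ singularity at $t=0$ absorbed because the sum over the cyclic reindexing of which factor plays the role of $1/t$ telescopes) yields exactly the factor $\tfrac1\ell \prod_j \eta_j^{-1}$. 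The cleanest way to see this is the partial-fractions identity $\tfrac{1}{t\prod_j(a_j+t)}$ expanded over $t=0$ and $t=-a_j$; only the residues at $t=-a_j$ survive the cyclic symmetrization and they sum to $\tfrac1\ell\prod a_j^{-1}$ after using $\tr\{(R_0(\vk)C_+q)^\ell\}$'s invariance under cyclic permutation of the $\eta_j$. This is the step I expect to be the main obstacle: making the term-by-term integration rigorous (dominated convergence, justified by the Hilbert--Schmidt estimate which gives a summable-in-$\ell$ majorant uniformly for $\xi\geq 0$) and carrying out the residue bookkeeping so that the $1/t$ pole at the origin does not contribute.

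On the circle the argument is identical with $\int_0^\infty(\cdots)\tfrac{d\xi}{2\pi}$ replaced by $\sum_{\xi\in 2\pi\Z_+}(\cdots)$ and the continuous frequency integrals replaced by sums over $2\pi\Z_+$; the only change is that the elementary calculus identity becomes the corresponding identity for sums, $\sum_{n\geq 0}\tfrac{1}{(a+n)\prod_j(b_j+n)}$, whose partial-fraction / residue structure is the same. Convergence of the double sum is again supplied by the Hilbert--Schmidt bound stated before the lemma (the "parallel arguments" on $\T$ alluded to there), which dominates $\sum_\xi \tfrac{1}{\kappa+\xi}|\beta(\kappa+\xi;q)|$ by a convergent series uniformly for $q\in\BA$. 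Finally, I would note that since both $\alpha(\kappa;q)$ and the right-hand sides extend real-analytically (indeed meromorphically, via Proposition~\ref{P:beta}) in $\kappa$, it suffices to establish the identity for $\kappa\geq\kappa_0(A)$, which is exactly the stated range.
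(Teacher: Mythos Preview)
Your proposal has a genuine gap at the very first step.  The identification
\[
\bigl\langle q_+, R_0(\vk)[C_+qR_0(\vk)]^{\ell-1}q_+\bigr\rangle \stackrel{?}{=} \tr\bigl\{(R_0(\vk)C_+q)^\ell\bigr\}
\]
is false.  For $\ell=1$ the left side is $\int_0^\infty \tfrac{|\wh q(\xi)|^2}{\xi+\vk}\,d\xi$, while the right side is $\int_0^\infty \tfrac{\wh q(0)}{\xi+\vk}\,d\xi$, which diverges; indeed the series~\eqref{alpha} starts at $\ell=2$ precisely because $\tr\{R_0 C_+q\}$ is not defined.  The rank-one heuristic ``$q_+ = C_+q$ acting on $1$'' cannot repair this: $1\notin L^2_+(\R)$ and $|q_+\rangle\langle q_+|$ is not the Toeplitz operator $C_+q$.  (There is also a spurious sign $(-1)^{\ell-1}$: the resolvent series \eqref{R series} has no alternating sign.)  Downstream, the ``elementary identity'' you propose fails as well: already for $\ell=1$,
\[
\tfrac{1}{2\pi}\int_0^\infty \frac{d\xi}{(\kappa+\xi)(\eta+\kappa+\xi)} = \tfrac{1}{2\pi\eta}\log\Bigl(1+\tfrac{\eta}{\kappa}\Bigr) \neq \tfrac{1}{\eta+\kappa},
\]
so the residue bookkeeping you sketch cannot produce the factor $\tfrac1\ell\prod_j(\eta_j+\kappa)^{-1}$.

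The mechanism you are missing is the paper's: write $\tfrac1\ell\tr\{(R_0(\kappa)C_+q)^\ell\}$ as an integral over $\xi_1,\dots,\xi_\ell\geq 0$, use cyclic invariance to trade the $\tfrac1\ell$ for the constraint $\xi_1\leq\min_j\xi_j$, then substitute $\xi_j\mapsto \xi_j-\xi_1$.  This pulls out the factor $\tfrac{1}{\kappa+\xi_1}$ and turns the remaining $(\ell-1)$-fold integral into exactly the $(\ell-1)$th term $\langle q_+, R_0(\kappa+\xi_1)[C_+qR_0(\kappa+\xi_1)]^{\ell-2}q_+\rangle$ of the $\beta(\kappa+\xi_1)$ expansion.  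Note the index shift: term $\ell$ of $\alpha$ matches term $\ell-1$ of $\beta$, which is why $\alpha$ starting at $\ell=2$ lines up with $\beta$ starting at $\ell=1$.  Your term-by-term strategy is salvageable, but only via this change of variables rather than a partial-fractions integral in the spectral parameter.
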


\begin{proof}
We will present the details in the circle setting.  The computations in the line setting are a close parallel.

Using symmetry followed by a change of variables and Plancherel, we may write
\begin{align*}
\frac1\ell &\tr\bigl\{ (R_0(\kappa)C_+ q)^\ell \bigr\}\\
&= \sum_{\xi_1, \ldots, \xi_\ell\in 2\pi\Z_+} \frac1\ell  \, \frac{\widehat q(\xi_1-\xi_2)}{\kappa+\xi_1}
	\frac{\widehat q(\xi_2-\xi_3)}{\kappa+\xi_2} \cdots \frac{\widehat q(\xi_\ell-\xi_1)}{\kappa+\xi_\ell}\\
&=\sum_{\substack{\xi_1\leq\min\{\xi_2, \ldots, \xi_\ell\} \\[0.5ex] \xi_1, \ldots, \xi_\ell\in 2\pi\Z_+}}
	\frac{\widehat q(\xi_1-\xi_2)}{\kappa+\xi_1} \frac{\widehat q(\xi_2-\xi_3)}{\kappa+\xi_2} \cdots \frac{\widehat
q(\xi_\ell-\xi_1)}{\kappa+\xi_\ell}\\
&=\sum_{\xi\in 2\pi\Z_+} \frac1{\kappa+\xi}
	\sum_{\substack{ \eta_2, \ldots, \eta_\ell \in 2\pi\Z \\[0.5ex] \eta_j+ \cdots + \eta_\ell\geq 0,  \forall 2\leq j\leq \ell}}
		\widehat q \bigl(-(\eta_2+\cdots\eta_\ell)\bigr) \prod_{j=2}^\ell\frac{\widehat q(\eta_j)}{\kappa+\xi + \eta_j+ \cdots + \eta_\ell }\\
&=\sum_{\xi\in 2\pi\Z_+} \frac1{\kappa+\xi} \Bigl\langle q,\, \bigl(R_0(\kappa+\xi) C_+ q\bigr)^{\ell-2}R_0(\kappa+\xi) q_+\Bigr\rangle.
\end{align*}
Recalling \eqref{m series}, \eqref{beta}, and summing over $\ell\geq
2$, we obtain
$$
\alpha(\kappa;q) = \sum_{\xi\in 2\pi\Z_+} \frac1{\kappa+\xi}  \langle
q, m(\kappa+\xi, q)\rangle =\sum_{\xi\in 2\pi\Z_+} \frac1{\kappa+\xi}
\beta(\kappa+\xi; q),
$$
which completes the proof in the circle setting.
\end{proof}

\subsection{The action of higher symmetries}\label{ss:higher}
With infinitely many conserved quantities, the Benjamin--Ono equation possesses a wide array of Hamiltonian symmetries.  As these Hamiltonians are all mutually commuting, these symmetries preserve the values of all these conserved quantities.

By \emph{higher} symmetries, we mean those that do not preserve the conserved quantities.  Scaling and Galilei/Lorentz boosts are important examples, common to a rich class of Hamiltonian PDE.  In the Benjamin--Ono setting, these symmetries take the forms given in \eqref{scaling} and \eqref{E:Galilei}, respectively.

The scaling symmetry is Hamiltonian; indeed,  the center of momentum
\begin{align}\label{CofP}
\CofP &:= \int \tfrac12 x q(x)^2\,dx  \qtq{generates} \tfrac{d}{dt} q = (xq)' = xq' + q = \tfrac{d q_\lambda}{d\lambda} \bigr|_{\lambda=1}.
\end{align}
While one should actually divide by the total momentum to find the true centroid, this muddies the formulas without yielding better physical insight. 

The Galilei symmetry is not Hamiltonian; indeed, no Hamiltonian flow can change the value of the Casimir $\int q$.

Our first result describes the action of these higher symmetries on the totality of the conserved quantities, expressed in terms of their generating function $\beta$:

\begin{lemma}
Working on the line, with $q_\lambda$ defined by \eqref{scaling}, we have
\begin{equation}\label{scale beta}
\beta(\lambda \kappa; q_\lambda) = \beta(\kappa; q) \quad\text{for any $\lambda >0$.}
\end{equation}
On the circle, the Galilean symmetry acts as follows: for any $c\in\R$,
\begin{equation}\label{Galilei beta}
\beta(\kappa; q+c) + {\textstyle\int} (q+c)\,dx = \tfrac{\kappa^2}{(\kappa-c)^2} \Bigl[\beta(\kappa -c;q) + {\textstyle\int} q\,dx\Bigr]
	+ \tfrac{c \kappa}{\kappa-c}.
\end{equation}
\end{lemma}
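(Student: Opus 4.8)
\textbf{The scaling identity \eqref{scale beta}.} The plan here is to exploit the covariance of the defining equation \eqref{m eq} under dilations. Since $C_+$ commutes with the dilation $f(x)\mapsto f(\lambda x)$ for every $\lambda>0$, a direct substitution shows that if $m(\cdot\,;\kappa,q)$ solves \eqref{m eq}, then $x\mapsto m(\lambda x;\kappa,q)$ solves the same equation with potential $q_\lambda$ and parameter $\lambda\kappa$; by the uniqueness part of Proposition~\ref{P:m EU} we conclude that $m(x;\lambda\kappa,q_\lambda)=m(\lambda x;\kappa,q)$ whenever both sides are defined. Feeding this into $\beta(\kappa;q)=\int q(x)m(x;\kappa,q)\,dx$ and changing variables $y=\lambda x$ gives $\beta(\lambda\kappa;q_\lambda)=\beta(\kappa;q)$ on any ray of sufficiently large $\kappa$. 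The identity then propagates to the whole right half-plane because, by Proposition~\ref{P:beta}, both sides extend meromorphically in $\kappa$; this also removes the restriction that $\lambda\kappa$ be large relative to the size of $q_\lambda$.

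\textbf{The Galilean identity \eqref{Galilei beta}: set-up.} The key structural observation is that on the circle, shifting $q$ by a constant merely shifts the Lax operator by a scalar. Indeed, the quadratic form of $\mc L_{q+c}$ differs from that of $\mc L_q$ by $-c\langle f,f\rangle$ on the common form domain $H^{1/2}_+$, so $\mc L_{q+c}=\mc L_q-c\,\Id$ by the uniqueness in Proposition~\ref{P:Lax}, and hence $(\mc L_{q+c}+\kappa)^{-1}=(\mc L_q+\mu)^{-1}$ with $\mu:=\kappa-c$. Moreover, on the circle $C_+$ fixes every element of $L^2_+$ and fixes constants (frequency $0$ is retained; cf.\ \eqref{Cid}), so $(q+c)_+=q_++c$. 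Using the last representation in \eqref{beta}, this reduces the left side of \eqref{Galilei beta} to computing
\[
\bigl\langle q_++c,\ (\mc L_q+\mu)^{-1}(q_++c)\bigr\rangle \ +\ \textstyle\int q\,dx \ +\ c .
\]

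\textbf{The Galilean identity: the computation.} From the resolvent computation already performed in the proof of \eqref{beta4} one has $(\mc L_q+\mu)^{-1}1=\mu^{-1}\bigl(1+m(\mu)\bigr)$, writing $m(\mu):=m(\cdot\,;\mu,q)$; combined with $(\mc L_q+\mu)^{-1}q_+=m(\mu)$ this yields $(\mc L_q+\mu)^{-1}(q_++c)=(1+c\mu^{-1})m(\mu)+c\mu^{-1}$. Pairing against $q_++c$ leaves only three building blocks: $\langle q_+,m(\mu)\rangle=\beta(\mu;q)$ by \eqref{beta}; $\langle q_+,1\rangle=\int q\,dx$ (since $C_+$ retains frequency $0$ and $q$ is real); and $\langle 1,m(\mu)\rangle=\int m(\mu)\,dx=\mu^{-1}\bigl(\beta(\mu;q)+\int q\,dx\bigr)$ by \eqref{beta3}. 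Substituting these, abbreviating $B:=\beta(\mu;q)+\int q\,dx$, and collecting terms, the displayed expression collapses to $(1+c\mu^{-1})^2 B+c^2\mu^{-1}+c$. Since $\mu+c=\kappa$, one has $(1+c\mu^{-1})^2=\kappa^2/(\kappa-c)^2$ and $c^2\mu^{-1}+c=c\kappa/(\kappa-c)$, which is precisely the right side of \eqref{Galilei beta}. As in the scaling case, the size restrictions of Convention~\ref{C:Akmn} are immaterial: both sides of \eqref{Galilei beta} are meromorphic in $\kappa$ by Proposition~\ref{P:beta}, so it is enough to verify the identity for $\kappa$ (hence $\kappa-c$) large.

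\textbf{Main obstacle.} None of these steps is deep; the real content is careful bookkeeping with the $C_+$--projections. The one point that genuinely needs care is the operator identity $\mc L_{q+c}=\mc L_q-c\,\Id$ together with the facts that $C_+$ acts as the identity on $L^2_+$ and fixes constants on the circle---this is exactly where the circle conventions \eqref{Cid} enter, and exactly why the line argument breaks down (a constant is not in $L^2(\R)$), consistent with there being no line analogue of \eqref{Galilei beta}. The only other subtlety---that $\mu=\kappa-c$ may be small, or that $\kappa$ may be too small relative to $q+c$ for $m$ to be defined---is dispatched by the meromorphic continuation of $\beta$ from Proposition~\ref{P:beta}.
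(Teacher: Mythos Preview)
Your proof is correct and follows essentially the same path as the paper. For \eqref{scale beta} the paper phrases the scaling covariance in terms of the unitary dilation $\mc U_\lambda$ acting on the resolvent representation $\beta=\langle q_+,R(\kappa)q_+\rangle$, whereas you phrase it via the uniqueness of $m$ in \eqref{m eq}; these are two faces of the same computation. For \eqref{Galilei beta} the paper invokes \eqref{beta4} twice (once for $q+c$ at parameter $\kappa$, once for $q$ at parameter $\kappa-c$) and equates, which packages in one line exactly the bilinear expansion you carry out by hand using $(\mc L_q+\mu)^{-1}1=\mu^{-1}(1+m(\mu))$ and \eqref{beta3}.
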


\begin{proof}
We define an operator $\mc U_\lambda$ via
\begin{equation}
[ \mc U_\lambda f](x) = \sqrt\lambda\,f(\lambda x).
\end{equation}
This is unitary on $L^2_+(\R)$.  It differs from the scaling \eqref{scaling} by $q_\lambda = \sqrt\lambda\,\mc U_\lambda q$.

Direct computation shows that
$
(\mc L(q_\lambda) + \lambda\kappa) \;\!\mc{U}_\lambda =  \lambda \;\!\mc U_\lambda (\mc L(q) + \kappa )
$, which implies
\begin{align*}
\mc{U}_\lambda (\mc L(q) + \kappa )^{-1}  =  \lambda (\mc L(q_\lambda) + \lambda\kappa)^{-1} \;\!\mc U_\lambda .
\end{align*}
The identity \eqref{scale beta} now follows easily:
\begin{align*}
\beta(\lambda\kappa; q_\lambda) = \lambda \langle \mc U_\lambda q_+,  (\mc L(q_\lambda) + \lambda\kappa)^{-1} \;\!\mc U_\lambda q_+\rangle
	= \langle \mc U_\lambda q_+,  \;\!\mc U_\lambda (\mc L(q) + \kappa)^{-1}  q_+\rangle = \beta(\kappa;q).
\end{align*} 

The identity \eqref{Galilei beta} follows from \eqref{beta3}, \eqref{beta4}, the observation $\mc L(q+c) = \mc L(q) - c$,
and elementary manipulations.
\end{proof}

By differentiating the identity \eqref{scale beta} with respect to $\lambda$ and setting $\lambda=1$, we obtain the following virial-type identity:
\begin{align}\label{P virial}
\{ \beta(\kappa), \CofP \} &=  -\kappa \tfrac{\partial\beta}{\partial\kappa}.
\end{align}
Understanding the $\CofP$ as the generator of scaling and matching coefficients in the $\kappa\to\infty$ expansion, we see that \eqref{P virial} shows that the Hamiltonians for which $\beta(\kappa)$ is the generating function are individually homogeneous under scaling.

An alternate physical interpretation of \eqref{P virial} is that it reveals the time dependence of $\CofP$ under each of the Hamiltonians; specifically, it shows that the center of momentum travels at a constant speed equal to a numerical multiple of the Hamiltonian.

A third perspective on \eqref{P virial} is this: Given a conserved quantity, taking the Poisson bracket with $\CofP$ will yield a new conserved quantity.  Sadly, it is not really `new'; each term in the expansion of $\beta$ merely picks up a numerical prefactor illustrating its scaling degree.  The Galilei symmetry is more exciting.  The formula \eqref{Galilei beta} shows that by performing a Galilei boost on a single Hamiltonian yields a polynomial in $c$ whose coefficients are all the preceding Hamiltonians.  It allows one to descend through the hierarchy!

As mentioned in the introduction, Fokas and Fuchssteiner \cite{Fokas1981} found a vector field $\tau$ that allowed them to \emph{ascend} in the hierarchy.  As the culmination of this section, we will now explain how the preceding discussion led us to a new and physically appealing interpretation of their discovery.  Then in Section~\ref{S:L&P} we will present a far reaching generalization; see Theorem~\ref{T:Virial}. 

Let us declare that the center of energy is given by
\begin{align}\label{CofE}
\CofE := \int \tfrac12 x q(x) \cdot \h\partial q(x) - \tfrac13 x q(x)^3\,dx .
\end{align}
The term $xq^3$ is not controversial.  However, we have selected a very specific way of inserting the weight $x$ into the kinetic energy term and would have to admit other possibilities, but for the following dramatic observation: The Hamiltonian vector field associated to $\CofE$ is (subject to our sign conventions) precisely the $\tau$ vector field of \cite{Fokas1981}!  To see this, we use that $[\h\partial,x]= \h$ and so 
\begin{align}
\partial_x \bigl(\tfrac{\delta\ }{\delta q} \CofE\bigr) = \bigl[ x\h q' +\tfrac12 \h q\bigr]' - [ x q^2]' = x (\h q'' -2qq') - q^2 +\tfrac32 \h q'.
\end{align} 

In this way, the miraculous property of $\tau$ can be summarized as 
\begin{align}\label{E virial}
\{ \beta(\kappa), \CofE \} &= \kappa^2 \tfrac{\partial\beta}{\partial\kappa} + \kappa \beta(\kappa),
\end{align}
which shows that the Poisson bracket of $\CofE$ and one of Hamiltonians of the hierarchy yields the next \emph{higher} Hamiltonian.  Equivalently,  the center of energy travels at a constant speed, which is given by this higher Hamiltonian.

This presentation leads us naturally to ask: Is there a coherent way of defining the center for every one of the conserved quantities? Perhaps even a unifying $\CofB$?   And can this be done in such a way that these centers move at a constant speed? Naturally, this speed would be another conserved quantity.  We will answer all these questions successfully in Section~\ref{S:L&P}.  This will include a proof of \eqref{E virial}.

For such a direct identity involving $m$ and $q$, it is tempting to imagine that \eqref{E virial} should follow quickly from \eqref{m eq}, \eqref{beta}, and \eqref{beta2} together with some strategic integrations by parts.  We know of no simple argument of this type.  Nevertheless, our discovery of just the right Lax representation of the flows, presented in the next section, will yield the result very quickly indeed.

\section{Well-posedness}\label{S:GWP} 

Our analysis begins with the discussion of the evolution dictated by our regularized Hamilonians $\hk$ introduced in \eqref{Hk intro}.
These Hamiltonians are not globally defined: for a given size of initial data, $\kappa$ needs to be chosen sufficiently large.  With this in mind, Convention~\ref{C:Akmn} will be in force throughout this section.

In this section, we will verify that $\beta$ is conserved under the $\hk$ flow, as well as under \eqref{BO}.  In this way, our convention ensures that $\kappa$ will be large enough, not only for the initial data, but also for all trajectories of interest to us.

Before turning to the well-posedness of the $\hk$ flow, our first result is devoted to describing the associated vector field.

\begin{proposition}\label{P:HK}
The evolution induced by the Hamiltonian $\hk$ is
\begin{align} \label{Hk flow}
\tfrac{d}{dt} q = \begin{cases}
	- \kappa^2 \big( m + \ol m +  |m|^2 \big)'  + \kappa q'  & \text{ on $\R$,}\\
	- \kappa^2 \big( m + \ol m +  |m|^2 \big)'  + \bigl[ \kappa +\tint \bigr] q'  & \text{ on $\T$.}
\end{cases}
\end{align}
Moreover, we have the following Lax pair representation: $q$ solves \eqref{Hk flow} if and only~if 
\begin{equation}\label{Lax form}
\tfrac{d}{dt} \mc{L} = [\mc{P}_\kappa,\mc{L}]
\end{equation}
where $\mc L=\mc L(q(t))$ is the Lax operator described in Proposition~\ref{P:Lax} and
\begin{equation}\label{PkR}
\mc{P}_\kappa:= i\kappa^3(\mc L +\kappa)^{-1} - i\kappa^2 (m+1)C_+(\ol{m}+1) + \kappa\partial,
\end{equation}
on the line; on the circle, $\mc P_\kappa$ is defined by
\begin{equation}\label{PkT}
\mc{P}_\kappa:=i\kappa^2\bigl[\kappa+\beta(\kappa)+\tint\bigr](\mc L +\kappa)^{-1}
	- i\kappa^2 (m+1)C_+(\ol{m}+1) + \bigl[\kappa +\tint \bigr]\partial .
\end{equation}
These operators have the special property
\begin{align} \label{Hk flow Pk}
\tfrac{d}{dt} q_+ = \mc P_\kappa q_+.
\end{align}
\end{proposition}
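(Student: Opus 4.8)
The plan is to establish the three assertions of the proposition in turn: the vector-field formula \eqref{Hk flow}, the Lax representation \eqref{Lax form} with \eqref{PkR}/\eqref{PkT}, and the special identity \eqref{Hk flow Pk}.  The $\hk$ flow is $\tfrac{d}{dt}q=\partial_x\tfrac{\del\hk}{\del q}$, so for the first assertion I would simply compute $\tfrac{\del\hk}{\del q}$ from \eqref{Hk intro}.  Using $\tfrac{\del P}{\del q}=q$, $\tfrac{\del}{\del q}\tint=1$ on $\T$, and Proposition~\ref{P:beta}'s formula $\tfrac{\del\beta}{\del q}=m+\ol m+|m|^2$, one finds that $\tfrac{\del\hk}{\del q}$ equals $-\kappa^2(m+\ol m+|m|^2)$, plus the transport potential ($\kappa q$ on $\R$, $[\kappa+\tint]q$ on $\T$), plus spatially constant terms ($P(q)$ in both cases, and additionally $\kappa\tint$ and $\tfrac12\btint^2$ on $\T$).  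Applying $\partial_x$ annihilates the constants and leaves \eqref{Hk flow}.

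For the Lax equation, observe first that only the potential part of $\mc L f=-if'-C_+(qf)$ is time dependent, so $\tfrac{d}{dt}\mc L$ is the Toeplitz operator $f\mapsto -C_+(\dot q\,f)$.  I would compute $[\mc P_\kappa,\mc L]$ term by term against \eqref{PkR}/\eqref{PkT}.  The resolvent summand $(\mc L+\kappa)^{-1}$ --- and, on $\T$, the scalar prefactor $\kappa+\beta(\kappa)+\tint$ as well --- commutes with $\mc L$ and so contributes nothing.  The summand $\kappa\partial$ (resp.\ $[\kappa+\tint]\partial$) yields $[\kappa\partial,\mc L]f=-\kappa C_+(q'f)$, which is precisely the transport term.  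It therefore remains to verify $[-i\kappa^2(m+1)C_+((\ol m+1)\,\cdot),\mc L]f=\kappa^2C_+\bigl((m+\ol m+|m|^2)'f\bigr)$.  This is the crux.  I would expand this commutator directly and then use the defining relation $(\mc L+\kappa)m=q_+$ --- equivalently the ODE \eqref{m eq} --- together with its consequence $(\mc L+\kappa)(m+1)=\kappa$, to eliminate $m'$ and the Toeplitz product $C_+(qm)$; the resulting terms that are not manifestly Toeplitz cancel by inspection of Fourier supports (each pairs a nonnegative-frequency $m$-type factor with a nonpositive-frequency $\ol m$-type factor, acted on by $C_\pm$), and the rest reorganizes into the Toeplitz operator with symbol $(m+\ol m+|m|^2)'=(|m+1|^2)'$.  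On $\T$ one must additionally track the constants produced by $C_++C_-=1+\tint$ from \eqref{Cid}.  Summing the three contributions reproduces $-C_+(\dot q\,\cdot)$ with $\dot q$ equal to the right-hand side of \eqref{Hk flow}; since the computation of $[\mc P_\kappa,\mc L]$ never involves $\dot q$ and a nonzero symbol yields a nonzero Toeplitz operator, both directions of the stated equivalence follow.

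The identity \eqref{Hk flow Pk} then follows cleanly, and it is here that the new leading term of $\mc P_\kappa$ earns its place.  Since $\mc L 1=-C_+q=-q_+$ in both geometries and $1$ is time independent, the Lax equation gives $\tfrac{d}{dt}q_+=-\tfrac{d}{dt}(\mc L 1)=-\bigl(\tfrac{d}{dt}\mc L\bigr)1=-[\mc P_\kappa,\mc L]1=\mc P_\kappa q_++\mc L(\mc P_\kappa 1)$, so it suffices to show $\mc P_\kappa 1=0$.  Using $(\mc L+\kappa)^{-1}1=\kappa^{-1}(m+1)$ (which follows from $(\mc L+\kappa)(m+1)=\kappa$; cf.\ the derivation of \eqref{beta4}) together with $C_+((\ol m+1)\cdot 1)=1+C_+\ol m$ --- this being $1$ on $\R$ and $1+\int\ol m\,dx=\kappa^{-1}\bigl(\kappa+\beta(\kappa)+\tint\bigr)$ on $\T$ by \eqref{beta3} --- one checks that the resolvent term of $\mc P_\kappa$ applied to $1$ exactly cancels $-i\kappa^2(m+1)C_+((\ol m+1)\cdot 1)$, the prefactors $\kappa^3$ on $\R$ and $\kappa^2[\kappa+\beta(\kappa)+\tint]$ on $\T$ having been chosen precisely so that this happens, while $\partial 1=0$.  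Hence $\mc P_\kappa 1=0$.

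The main obstacle is the crux commutator identity of the second paragraph.  It is a delicate algebraic computation: one must deploy the nonlocal eigenvalue equation for $m$ to trade away all derivatives and Toeplitz products, and then verify that every remaining non-Toeplitz piece cancels by Hardy-space/Fourier-support reasoning --- with extra vigilance on the circle, where $C_++C_-=1+\tint$ injects constant corrections at several points.  Once that identity is secured, the derivations of \eqref{Hk flow} and \eqref{Hk flow Pk} are routine.
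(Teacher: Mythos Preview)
Your approach to \eqref{Hk flow} and to the Lax equation \eqref{Lax form} is essentially that of the paper: the paper likewise discards the resolvent summand (which commutes with $\mc L$), computes the commutator of $-i\kappa^2(m+1)C_+(\ol m+1)$ with $\mc L$ directly, and uses the eigenvalue equation \eqref{m eq} together with Fourier-support cancellations to reduce it to the Toeplitz operator with symbol $\kappa^2(m+\ol m+|m|^2)'$.

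Your derivation of \eqref{Hk flow Pk} via $\mc P_\kappa 1=0$ is, however, a genuinely different route.  The paper instead computes $\mc P_\kappa q_+$ directly: it expands $C_+((\ol m+1)q_+)$ and $C_+((m+1)(\ol m+1)q)$ using \eqref{m eq} and, on the circle, \eqref{beta3}, and then assembles the pieces.  Your argument is more conceptual and explains transparently why the resolvent prefactors in \eqref{PkR} and \eqref{PkT} take exactly the values they do.  On the circle it is fully rigorous.  On the line, though, the constant function $1$ lies outside $L^2_+(\R)$, so identities such as $\mc L 1=-q_+$ and $(\mc L+\kappa)^{-1}1=\kappa^{-1}(m+1)$ are only formal; the paper itself flags this issue in Section~\ref{S:L&P} (see the discussion preceding Lemma~\ref{L:P10}) and establishes the line analogue of $\mc P^\beta_\kappa 1=0$ only as a limit $\mc P^\beta_\kappa\chi_y\to 0$ with $\chi_y(x)=iy/(x+iy)$.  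Your argument could be made rigorous along the same lines, but as written it sweeps this domain issue under the rug.  The paper's direct computation of $\mc P_\kappa q_+$ sidesteps the difficulty entirely.
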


Before turning to the proof of this result, we pause to note that irrespective of the geometry, the first term in the definition of $\mc P_\kappa$ is inconsequential to the Lax-pair property, because it commutes with $\mc L$.  However, its removal would destroy the special property \eqref{Hk flow Pk}, which greatly expedites the arguments of this section and played a crucial role in our discoveries reported in the next section.

Let us also note that while restricting the torus evolution to $\int q=0$ would unify the dynamical equations \eqref{Hk flow}, it would not do the same for the operators $\mc P_\kappa$; they would still differ by the summand $i\kappa^2 \beta(\kappa) R(\kappa)$.

\begin{proof}[Proof of Proposition~\ref{P:HK}]
To avoid repeating ourselves, we will only present the details in the periodic case, which are slightly more involved.

The equation~\eqref{Hk flow} follows from \eqref{Hk intro}, \eqref{dbeta},  and the Poisson structure \eqref{Poisson}:
\begin{equation}\label{6:12}
\tfrac{d}{dt} q =  - \kappa^2 \big( \tfrac{\del\beta}{\del q} \big)' + \bigl[\kappa+\tint\bigr] \big(\tfrac{\del P}{\del q} \big)' 
	= - \kappa^2 \big( m + \ol m +  |m|^2  \big)' + \bigl[\kappa+\tint\bigr] q'.
\end{equation}
 
Next we address the Lax pair formulation of the $\hk$ flow.  As noted above, it suffices to prove the Lax property with
\begin{equation}\label{Pk tilde}
\widetilde{\mc P}_\kappa := - i\kappa^2 (m+1)C_+(\ol{m}+1) + \bigl[\kappa +\tint \bigr]\partial  .
\end{equation}
If $q$ satisfies \eqref{Hk flow}, then
\begin{equation}\label{5:42}
\tfrac{d}{dt} \mc{L} = \kappa^2 C_+ m'(\ol m +1) + \kappa^2 C_+ (m+1) \ol m' - \bigl[\kappa +\tint \bigr] C_+ q'
\end{equation}
as operators on $L^2_+$.  We will show that RHS\eqref{Lax form}=RHS\eqref{5:42}, which proves that \eqref{Hk flow} implies \eqref{Lax form}. 

Conversely, as $(\tfrac{d}{dt} \mc{L})f =  - C_+( \tfrac{dq}{dt} f)$, the time derivative of $\mc L$ uniquely determines $\tfrac{dq}{dt}$. Thus, the equality RHS\eqref{Lax form}=RHS\eqref{5:42} also shows that \eqref{Lax form} implies \eqref{Hk flow}.

Proceeding directly from the definitions, we find
\begin{align}\label{5:43}
[\widetilde{\mc P}_\kappa,\mc{L}]
&= \kappa^2 \bigl\{ m' C_+(\ol{m}+1) + (m+1)C_+ \ol{m}'  \big\} - \bigl[\kappa +\tint \bigr] C_+q' \\
&\quad - i\kappa^2 C_+ q(m+1)C_+(\ol{m}+1) + i\kappa^2 (m+1)C_+(\ol{m}+1)C_+ q  \notag\\
&=\text{RHS}\eqref{5:42} -\kappa^2 C_+ m' [1 - C_+] (\ol{m}+1)  -\kappa^2 C_+ (m+1) [1 - C_+] \ol m' \notag\\
&\quad - i\kappa^2 C_+ q(m+1)C_+(\ol{m}+1) + i\kappa^2 (m+1)C_+(\ol{m}+1)C_+ q  \notag
\end{align}
as  operators on $L^2_+$.  Now for $f\in H^\infty_+$, \eqref{m eq} yields
\begin{align*}
C_+ m' & [1 - C_+] (\ol{m}+1) f + C_+ (m+1) [1 - C_+] \ol m' f \\
&= -i\kappa C_+  m [1 - C_+] (\ol{m}+1)f + i\kappa C_+  (m+1) [1 - C_+] \ol m f \\ 
&\quad + i C_+ [q(m+1)]_+ [1 - C_+] (\ol{m}+1) f - i C_+ (m+1) [1 - C_+]  [(\ol m+1)q]_- f\\
&= i C_+ q(m+1) [1 - C_+] (\ol{m}+1) f - i C_+ (m+1) [1 - C_+] (\ol m+1) q f \\
&= - i C_+ q(m+1) C_+ (\ol{m}+1) f + i C_+ (m+1) C_+ (\ol m+1) qf \\
&= - i C_+ q(m+1) C_+ (\ol{m}+1) f + i C_+ (m+1) C_+ (\ol m+1) C_+ qf  .
\end{align*}
Substituting this into \eqref{5:43} gives $[\widetilde {\mc P}_\kappa,\mc{L}]  =\text{RHS}\eqref{5:42}$, which shows that RHS\eqref{Lax form} and RHS\eqref{5:42} are equal, thereby completing the proof of the Lax pair formulation.

It remains to justify \eqref{Hk flow Pk}. One important distinction between the two geometries is \eqref{Cid}.  For example, working on $\T$ and using \eqref{beta} and \eqref{m eq}, we have
\begin{align*}
C_+(\ol m +1)q_+ &= C_+(\ol m +1)q = [1-C_-](\ol m +1)q + \bigl[ \beta(\kappa) + \tint \bigr] \\
&= (\ol m +1)q - i\ol m' - \kappa \ol m + \bigl[ \beta(\kappa) + \tint \bigr].
\end{align*}
Similar reasoning using \eqref{m eq} shows that
\begin{align*}
C_+(m+1)(\ol m +1)q &= C_+(\ol m +1)C_+ (m+1)q + C_+(\ol m +1)[1-C_+] (m+1)q \\
&= C_+(\ol m +1)(-im' +\kappa m),
\end{align*}
irrespective of the geometry.

Combining our last two calculations, we find that on $\T$,
\begin{align*}
i\kappa^2 C_+ & (m+1) C_+(\ol m +1)q_+ \\
&= i\kappa^2C_+  \Bigl[ (m+1)(\ol m +1)q  + (m+1)\bigl[ - i\ol m' - \kappa \ol m + \beta(\kappa) + \tint \bigr] \Bigr]\\
&= i\kappa^2C_+  \Bigl[ (\ol m +1)(-im' +\kappa m)  + (m+1)\bigl[ - i\ol m' - \kappa \ol m + \beta(\kappa) + \tint \bigr] \Bigr] \\
&= \kappa^2 C_+\bigl[ m+\ol m + |m|^2\bigr]' + i \kappa^3C_+(m-\ol m) + i\kappa^2\bigl[ \beta(\kappa) + \tint \bigr] (m+1).
\end{align*}
Again we meet a distinction.  On the line, $C_+\ol m=0$; however, on $\T$, \eqref{beta3} shows
\begin{align*}
i \kappa^3C_+ \ol m = i \kappa^3 {\textstyle\int} \ol m  = i\kappa^2  \bigl[ \beta(\kappa) + \tint \bigr].
\end{align*}
In this way, we deduce that on $\T$,
\begin{align*}
i\kappa^2 C_+ & (m+1) C_+(\ol m +1)q_+ = \kappa^2 C_+\bigl( m+\ol m+|m|^2\bigr)' +  i\kappa^2 \bigl[\kappa + \beta(\kappa) + \tint \bigr] m,
\end{align*}
from which \eqref{Hk flow Pk} follows easily.
\end{proof}

\begin{theorem}[Well-posedness of the $\hk$ flow]\label{t:Hk flow}
Given $A>0$, let $\kappa_0(A)$ be chosen according to Convention~\ref{C:Akmn}.  For $\kappa\geq \kappa_0$, the $\hk$ flow is globally well-posed for initial data in $\BA$.  Moreover, the quantity $\beta(\vk;q(t))$ is conserved by the $\hk$ flow:
\begin{equation}\label{beta dot Hk}
\tfrac{d}{dt} \beta(\vk;q(t))= 0 \qtq{for any} \vk\geq \kappa_0.
\end{equation}
Furthermore, if $q(0)\in \BA \cap H^\infty$ then $q(t)\in H^\infty$ for all $t\in\R$ and the $\hk$ flow commutes with the Benjamin--Ono flow on $H^\infty$.
\end{theorem}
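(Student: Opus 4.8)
The plan is to solve \eqref{Hk flow} by peeling off the pure transport term, which merely generates translations, and running a Picard iteration in $H^s$ on the remaining nonlinearity $q\mapsto-\kappa^2(m+\ol m+|m|^2)'$, which (in contrast to the full vector field) is genuinely $H^s$-valued. On the circle the mean $\tint$ is conserved by \eqref{Hk flow}, since its time derivative is the zero-frequency Fourier coefficient of an exact derivative; on the line the transport coefficient $\kappa$ is already a constant. In either case, for fixed data $q^0$ the transport velocity $c_0$ is a constant, and since $m$ is translation covariant by \eqref{m id}, conjugation by the isometry group $(e^{h\partial}f)(x)=f(x+h)$ reduces \eqref{Hk flow} to
\begin{equation*}
\tfrac{d}{dt}p=-\kappa^2\bigl(m+\ol m+|m|^2\bigr)',\qquad m=m(x;\kappa,p).
\end{equation*}
By Proposition~\ref{P:m Diff}, the algebra estimate \eqref{high reg alg}, and the boundedness of $\partial\colon H^{s+1}\to H^s$, the right-hand side is a real-analytic — hence locally Lipschitz — map from an $H^s$-neighbourhood of $(\BA)_{**}$ into $H^s$ (Convention~\ref{C:Akmn} guarantees $\kappa\ge\kappa_0$ suffices), with the analogous property at every higher regularity by the quantitative bounds \eqref{quant H infty}. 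Picard--Lindel\"of on Banach spaces then yields local existence, uniqueness, and Lipschitz dependence on the data for this reduced flow, and the standard persistence-of-regularity argument transfers all of this to $H^\sigma$ for every $\sigma\ge s$.

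I would next record conservation of $\beta(\vk)$. Along the reduced flow $t\mapsto p(t)$ is $C^1$ in $H^s$ and, by \eqref{dbeta} and Proposition~\ref{P:m EU}, $\tfrac{\del\beta(\vk)}{\del q}=n+\ol n+|n|^2\in H^{s+1}$, which embeds in $(H^s)^{*}$ because $s+1>|s|$; hence the chain rule and Lemma~\ref{L:Poisson 2} give
\begin{equation*}
\tfrac{d}{dt}\beta(\vk;p(t))=\bigl\langle n+\ol n+|n|^2,\,-\kappa^2(m+\ol m+|m|^2)'\bigr\rangle=-\kappa^2\{\beta(\vk),\beta(\kappa)\}(p(t))=0.
\end{equation*}
For $q\in H^\infty$ this is more immediate still: it follows from the Lax representation \eqref{Lax form}, the special property \eqref{Hk flow Pk}, and the antisymmetry of $\mc P_\kappa$, exactly as in the proof of \eqref{beta dot BO}. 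Since $\beta$ is translation invariant and the full $\hk$ flow is a time-dependent translation of the reduced one, $\beta(\vk)$ is conserved by the $\hk$ flow for every $\vk\ge\kappa_0$.

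Global existence now follows from Lemma~\ref{L:beta boot}: on any interval of existence $t\mapsto p(t)$ is a continuous curve starting in $\BA$ along which $\beta$ is conserved, as a meromorphic function on the right half-plane (using the extension in Proposition~\ref{P:beta}), so $p(t)\in(\BA)_{**}$. Consequently $\|p(t)\|_{H^s}$ is bounded by the explicit constant in \eqref{beta APB}, the solution remains in the open region where the vector field is defined and bounded, and — being then Lipschitz in $t$ — it has an endpoint limit at any finite blow-up time that again lies in $(\BA)_{**}$ by continuity of $\beta$, contradicting maximality; hence the reduced flow is global, and so is the $\hk$ flow obtained by undoing the reduction. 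Persistence of regularity together with this uniform $H^s$ bound gives $q^0\in H^\infty\Rightarrow q(t)\in H^\infty$ for all $t$. For continuous dependence of the $\hk$ flow map, the reduced flow map is Lipschitz on bounded sets by Gronwall (competing orbits all lie in $(\BA)_{**}$), while the $q^0$-dependence of the circle transport speed $c_0$ is controlled by strong continuity of the translation group: the orbits of a convergent sequence of data are equicontinuous by the last part of Lemma~\ref{L:beta boot}, which turns the small shift $e^{t(c_0-c_0')\partial}-\Id$ into a small error in $H^s$.

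For commutativity with \eqref{BO} on $H^\infty$, note that $\hk$ from \eqref{Hk intro} is built solely from $P$, $\beta(\kappa)$, and the Casimir $\tint$, each conserved by \eqref{BO} (momentum conservation, \eqref{beta dot BO}, and the Casimir property); hence $\hk$ is a conserved quantity for \eqref{BO} and $\{\hbo,\hk\}=0$ on $H^\infty$. Both flows are globally defined on $\BA\cap H^\infty$ and preserve $(\BA)_{**}\cap H^\infty$, so the classical fact that Poisson-commuting Hamiltonians generate commuting flows applies. The main obstacle here is structural rather than computational: the local theory works only so long as $\kappa$ remains large relative to $\|q(t)\|_{H^s_\kappa}$, which requires the a priori $H^s$ bound, which in turn rests on conservation of $\beta$ — and Lemma~\ref{L:beta boot} is exactly the device that closes this into a bootstrap. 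The subordinate technical point is the derivative loss in the transport term, removed by the moving-frame reduction at the cost of the mild circle complication noted above.
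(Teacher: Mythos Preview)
Your proposal is correct and follows essentially the same route as the paper: separate off the transport term, show the remaining nonlinearity $q\mapsto -\kappa^2(m+\ol m+|m|^2)'$ is Lipschitz on $H^s$ via Propositions~\ref{P:m EU}--\ref{P:m Diff}, run Picard iteration, use the Poisson commutativity of Lemma~\ref{L:Poisson 2} for conservation of $\beta(\vk)$, and globalize via Lemma~\ref{L:beta boot}. The only cosmetic difference is that the paper handles the transport term through the Duhamel formula $q(t)=e^{t\kappa\partial}q(0)-\kappa^2\int_0^t e^{(t-s)\kappa\partial}[\,\cdot\,]\,ds$ rather than your moving-frame reduction, and proves $\beta$-conservation first for $H^\infty$ data and then extends by continuity; your direct $H^s$ argument works because Lemma~\ref{L:Poisson 2} already gives $\{\beta(\kappa),\beta(\vk)\}=0$ on all of $\BA$.
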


\begin{proof}	
We present the proof in the line setting. On the circle, the linearized flow contains an additional translation at speed $\tint$.  This alters several formulas, but introduces no additional difficulty.

We begin by recasting \eqref{Hk flow} as the integral equation
\begin{equation*}
q(t) = e^{t\kappa\partial} q(0) - \kappa^2 \int_0^t e^{(t-s)\kappa\partial} \big[ |m(\kappa, q(s))|^2 + 2\Re m(\kappa,q(s)) \big]'\, ds .
\end{equation*}
Next, we observe that $q\mapsto[|m|^2 + 2\Re m]'$ is a Lipschitz function.  This follows from \eqref{m est}, \eqref{dm0}, \eqref{diffeo 1}, the fundamental theorem of calculus, and the fact that $H^{s+1}$ is an algebra:
\begin{align*}
\bnorm{ \big[|m(\kappa, q)|^2 &+ 2\Re m(\kappa,q) \big]'-\big[|m(\kappa, \wt{q})|^2 + 2\Re m(\kappa,\wt{q})\big]'}_{H^s}\\
&\lesssim \bnorm{ \big[|m(\kappa, q)|^2 + 2\Re m(\kappa,q) \big]-\big[|m(\kappa, \wt{q})|^2 + 2\Re m(\kappa,\wt{q})\big]}_{H^{s+1}}\\
&\lesssim \norm{ m(\kappa,q) - m(\kappa,\wt{q}) }_{H^{s+1}} \bigl[ \norm{ m(\kappa,q) }_{H^{s+1}}+ \norm{m(\kappa,\wt{q}) }_{H^{s+1}}+1\bigr]\\
&\lesssim \norm{q-\wt{q}}_{H^s} \bnorm{ dm|_q}_{H^s\to H^{s+1}}\bigl[ \norm{q}_{H^s} + \norm{\wt{q}}_{H^s} +1\bigr]\\
&\lesssim \norm{q-\wt{q}}_{H^s}
\end{align*}
uniformly for $q,\tilde q\in (\BA)_{**}$ and $\kappa\geq \kappa_0$.  (For this notation, see Lemma~\ref{L:beta boot}.)  Thus, local well-posedness on this larger set follows by Picard iteration.

Next we address the propagation of additional regularity.  By Proposition~\ref{P:m EU} we know that $q\in H^\infty$ implies $m(\kappa, q)\in H^\infty$.  Indeed, the quantitative bound \eqref{quant H infty} together with a Gronwall argument shows that higher regularity norms can grow at most exponentially in time.  Most important for us is the conclusion that when $q(0)\in H^\infty$, so $q(t)\in H^\infty$ for all times of existence.  

For $H^\infty$ solutions to the $\hk$ flow,  Lemma~\ref{L:Poisson 2} shows that
\begin{equation*}
\tfrac{d}{dt} \beta(\vk) = \{ \beta(\vk),H_\kappa \}= - \kappa^2 \{ \beta(\vk),\beta(\kappa) \} + \kappa \{ \beta(\vk),P(q) \}=0.
\end{equation*}
The conservation of $\beta(\vk)$ for $H^s$-solutions then follows from the $H^s$-continuity of $q\mapsto\beta(\vk;q)$ and the local well-posedness of the flow.

As Lemma~\ref{L:beta boot} demonstrates, the conservation of $\beta$ ensures that the local-in-time argument may be iterated indefinitely, thus yielding global well-posedness in $H^s$.
	
Lastly, we verify that the $\hk$ and the Benjamin--Ono flows commute on $H^\infty$ solutions.  We have
\begin{equation*}
\{ \hk,\hbo \} = - \kappa^2 \{ \beta(\kappa), \hbo \}  + \kappa \{ P,\hbo \}.
\end{equation*}
Each bracket on the right-hand side above vanishes because the $\hbo$ flow conserves both $\beta$ (see~\eqref{beta dot BO}) and the momentum $P$.
\end{proof}

Due to their commutativity, one may define a joint flow under both the Benjamin--Ono and $\hk$ Hamiltonians, at least for $H^\infty$ initial data.  The conservation of $\beta$ under both of these flows provides bounds and equicontinuity of joint orbits:

\begin{corollary}\label{C:Qstar}
Given $A>0$ and a set of real-valued initial data  $Q\subset B_A^s\cap H^\infty$, we define
\begin{equation}\label{Qstar}
Q_* = \big\{ e^{J\nabla (t_1\hbo + t_2\hk)}(q) : q\in Q,\ t_1,t_2\in\R,\ \kappa\geq\kappa_0(A) \big\}.
\end{equation}
Then $Q_*\subset Q_{**}$ and so $Q_{*}$ is bounded; indeed,
\begin{equation}\label{Qstar 2}
\bigl( 1 + \|q_0\|_{H^s} \bigr)^{-2|s|}  \| q_0 \|_{H^s}
	\lesssim \| q \|_{H^s} \lesssim \bigl( 1 + \|q_0\|_{H^s} \bigr)^{\frac{2|s|}{1-2|s|}}  \| q_0 \|_{H^s}
\end{equation}
for every $q\in\{q_0\}_{**}$ and $q_0\in \BA$. If $Q$ is equicontinuous, then so too is $Q_*$.
\end{corollary}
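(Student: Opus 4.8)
The plan is to recognize $Q_*$ as a subset of $Q_{**}$ and then to read off every assertion from Lemma~\ref{L:beta boot} and Theorem~\ref{t:Hk flow}, the only genuinely new ingredient being a time-reversal argument for the lower bound in \eqref{Qstar 2}. First I would show $Q_*\subset Q_{**}$. Fix $q\in Q\subset\BA\cap H^\infty$, times $t_1,t_2\in\R$, and $\kappa\geq\kappa_0(A)$. Since \eqref{BO} is globally well-posed on $H^\infty$ and Theorem~\ref{t:Hk flow} provides the same for the $\hk$ flow together with their commutativity on $H^\infty$, the point $e^{J\nabla(t_1\hbo+t_2\hk)}(q)=e^{t_1 J\nabla\hbo}\bigl(e^{t_2 J\nabla\hk}(q)\bigr)$ lies in $H^\infty$ and is reached from $q$ by concatenating the segment of the $\hk$-trajectory joining $q$ to $q_1:=e^{t_2 J\nabla\hk}(q)$ with the segment of the Benjamin--Ono trajectory joining $q_1$ to $e^{t_1 J\nabla\hbo}(q_1)$, reparametrized onto one interval $[a,b]$. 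This path is continuous in $H^s$, and along it $\beta(\vk;\cdot)$ is constant for every $\vk\geq\kappa_0$ --- on the first leg by \eqref{beta dot Hk}, on the second by \eqref{beta dot BO} --- with the two constant values coinciding because both equal $\beta(\vk;q)$. As $z\mapsto\beta(z;\cdot)$ extends to a meromorphic function on $\{\Re z>0\}$ by Proposition~\ref{P:beta}, agreement on the ray $\vk\geq\kappa_0$ forces agreement as meromorphic functions, exactly as in the remark preceding Lemma~\ref{L:beta boot}. Thus the endpoint lies in $Q_{**}$, and since $q,t_1,t_2,\kappa$ were arbitrary, $Q_*\subset Q_{**}$.

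Granting this inclusion, boundedness of $Q_*$, and its $H^s$-equicontinuity when $Q$ is $H^s$-equicontinuous, follow from the corresponding assertions of Lemma~\ref{L:beta boot}, since both properties are inherited by subsets. For the two-sided bound \eqref{Qstar 2}, fix $q_0\in\BA$ and $q\in\{q_0\}_{**}$. Applying \eqref{beta APB} with the singleton $\{q_0\}$ in place of $Q$ and $A=\|q_0\|_{H^s}$, and absorbing the $s$-dependent quantities $C_s^{1+|s|}$ and $2C_s$ into the implicit constant (using $C_s\geq1$, so that $1+2C_s\|q_0\|_{H^s}\leq 2C_s(1+\|q_0\|_{H^s})$), yields the upper estimate $\|q\|_{H^s}\lesssim(1+\|q_0\|_{H^s})^{\frac{2|s|}{1-2|s|}}\|q_0\|_{H^s}$.

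For the lower estimate I would exploit that the defining condition for $Q_{**}$ is symmetric under reversing a path: if $p:[a,b]\to H^s$ is continuous with $p(a)=q_0$, $p(b)=q$, and $\beta(z;p(t))\equiv\beta(z;q_0)$, then $t\mapsto p(a+b-t)$ exhibits $q_0$ as an element of $\{q\}_{**}$, its constant value of $\beta$ being $\beta(z;q_0)=\beta(z;q)$ since the endpoint values agree. Applying the upper estimate with $q$ and $q_0$ interchanged gives $\|q_0\|_{H^s}\lesssim(1+\|q\|_{H^s})^{\frac{2|s|}{1-2|s|}}\|q\|_{H^s}$, and an elementary case analysis comparing $\|q\|_{H^s}$ and $\|q_0\|_{H^s}$ with $1$ (using $\tfrac{2|s|}{1-2|s|}+1=\tfrac1{1-2|s|}$) rearranges this into $(1+\|q_0\|_{H^s})^{-2|s|}\|q_0\|_{H^s}\lesssim\|q\|_{H^s}$. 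Since the corollary is essentially a repackaging of Lemma~\ref{L:beta boot} and Theorem~\ref{t:Hk flow}, no step is a serious obstacle; the two points that warrant care are verifying that the joint flow genuinely traces a continuous, $\beta$-preserving path in $H^s$ (so that $Q_*\subset Q_{**}$) and the bookkeeping in the time-reversal/algebra step for the lower bound.
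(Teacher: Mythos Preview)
Your proof is correct and follows essentially the same route as the paper: you show $Q_*\subset Q_{**}$ by invoking conservation of $\beta$ under both flows (via \eqref{beta dot BO} and \eqref{beta dot Hk}), then read off boundedness, equicontinuity, and the upper bound in \eqref{Qstar 2} from Lemma~\ref{L:beta boot}, and obtain the lower bound by reversing the roles of $q$ and $q_0$. The paper's own proof is simply a terser version of exactly this argument; your explicit concatenation of flow segments and the case analysis for the lower-bound algebra are reasonable elaborations of steps the paper leaves implicit.
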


\begin{proof}
As we saw in \eqref{beta dot BO} and \eqref{beta dot Hk}, both flows defining $Q_*$ conserve $\beta$.  Thus $Q_*\subset Q_{**}$ and so Lemma~\ref{L:beta boot} may be applied.  The right-hand inequality in \eqref{Qstar 2} is just a recapitulation of \eqref{beta APB}.  The left-hand inequality follows from this by reversing the roles of $q$ and $q_0$.
\end{proof}

As discussed in the introduction, we wish to show that trajectories under the $H_\kappa$ Hamiltonian closely parallel the original Benjamin--Ono flow.  How is this to be done?  An obvious approach would be to compute the difference of the two vector fields and endeavor to show this is small in some sense.  This strikes the immediate hurdle that \eqref{BO} does not define a vector field on $H^s$ because the operator $q\mapsto q^2$ is not well-defined, even as a distribution.  Before taking the difference, we must make a gauge transformation; specifically, we will use $q\mapsto n= m(\vk, q)$.  Recall that by Proposition~\ref{P:m Diff}, this is a diffeomorphism from bounded subsets of $H^s$ into $H^{s+1}$, provided $\vk$ is sufficiently large.

The special property \eqref{Hk flow Pk} of our Lax pair representation \eqref{Lax form} makes it easy to deduce the dynamics of the new unknown $n=(\mc L + \vk)^{-1}q_+$ under the $H_\kappa$ flow:
\begin{align}\label{n dot Hk v1}
\tfrac{d}{dt} n &= [\mc P_\kappa,R(\vk)]q_+ + R(\vk) \mc P_\kappa q_+ = \mc P_\kappa R(\vk) q_+ = \mc P_\kappa n.
\end{align}
Indeed, this is the argument we used to deduce \eqref{m dot BO}, which says that
\begin{align}\label{n dot BO v1}
\tfrac{d}{dt} n = \mc P n = - in'' - 2C_+([q-q_+] n)' - 2q_+n' 
\end{align}
under the \eqref{BO} flow.

While these formulas are succinct and do make sense for $q\in H^s$, they obscure the numerous subtle cancellations that we must exploit in order to show convergence of the $\hk$ flows to the Benjamin--Ono flow as $\kappa\to\infty$.  Indeed, in the form presented, it is far from clear that the  $\kappa\to\infty$ limit of $\mc P_\kappa n$ even exists!  Our next step is to rewrite the evolution of $n$ under both the Benjamin--Ono and the $H_\kappa$ Hamiltonians in a new way that is amenable to demonstrating this essential convergence.


\begin{lemma}\label{L:best n dot} If $q(t)$ is an $H^\infty(\R)$ solution of \eqref{BO} on the line, then
\begin{align}\label{m dot BO 3}
\tfrac{d}{dt} n= \bigl\{ \mc{L}n - C_+(q_- n) \bigr\}' - iq_+\mc{L}n + q'_+n - iq_+C_+(q n),
\end{align}
while for solutions of the $H_\kappa$ flow on the line we have
\begin{align}\label{m dot Hk 2}
\tfrac{d}{dt}n ={}&\bigl\{ \kappa R(\kappa) \mc{L} n - \kappa^2 C_+\big[\ol{m} R(\kappa)n \big] \bigr\}'
	- i \kappa q_+ R(\kappa) \mc{L}n  + \kappa m'n\notag\\
& - i\kappa m C_+( [q-q_-+\kappa\ol{m}] n ) - i \kappa C_+(q_+\ol{m}) \cdot \mc{L} R(\kappa) n \\
&  + \kappa C_+\big( |m|^2 \big)' \cdot n - i\kappa  [1-C_-](q_+\ol{m})\cdot mn . \notag
\end{align}
On the circle, these formulas are modified as follows: 
\begin{align*}
\tfrac{d}{dt} n&= \textup{RHS\eqref{m dot  BO 3}} + [\tint ] n' \qtq{under \eqref{BO},} \\
\tfrac{d}{dt} n&= \textup{RHS\eqref{m dot  Hk 2}} + [\tint ] n' \qtq{under the $H_\kappa$ flow.}
\end{align*}
\end{lemma}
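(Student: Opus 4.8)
The plan is to take the compact evolution equations already in hand — \eqref{n dot BO v1} for the Benjamin--Ono flow and \eqref{n dot Hk v1} for the $H_\kappa$ flow — substitute the explicit forms of $\mc P$ from \eqref{P defn} and of $\mc P_\kappa$ from \eqref{PkR}/\eqref{PkT}, and rewrite the right-hand sides by purely algebraic manipulation. Since $q(t)\in H^\infty$ throughout, every product below is classically meaningful; the goal of the rewriting is to land on expressions each of whose factors lies in a space governed by Lemma~\ref{L:Hs mult} or Lemma~\ref{L:+*+}, so that the identities persist for $q\in H^s$ and the $\kappa\to\infty$ limit becomes visible. The structural tools are: the defining equation \eqref{m eq} (equivalently $\mc L m=q_+-\kappa m$ and $\mc L n=q_+-\vk n$), the resolvent relations $\mc L R(\kappa)=R(\kappa)\mc L$ and $(\mc L+\kappa)R(\kappa)=\mathrm{Id}$, the Cauchy--Szeg\H{o} identities \eqref{Cid} together with the fact (Lemma~\ref{L:+*+}) that products of Hardy functions remain holomorphic so that $C_+$ acts trivially on them, the reality of $q$ in the form $\ol{q_+}=q_-$, the commutator identity Lemma~\ref{L:id0}, and --- on the circle --- the relations \eqref{beta3}--\eqref{beta4}.

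For the Benjamin--Ono evolution this is short. Equation~\eqref{n dot BO v1} reads $\tfrac{d}{dt}n=-in''-2C_+([q-q_+]n)'-2q_+n'$. On the line $q-q_+=q_-$; on the circle $q-q_+=q_--\tint$ by \eqref{Cid}, and since $C_+([\tint]n)'=[\tint]n'$ ($n$ being holomorphic), this accounts exactly for the extra summand $[\tint]n'$. It then remains to check the line identity: expand $C_+(qn)=q_+n+C_+(q_-n)$ using $C_+(q_+n)=q_+n$, substitute $\mc L n=-in'-C_+(qn)$ into RHS\eqref{m dot BO 3}, and collect; the two occurrences of $q_+C_+(qn)$ cancel and the residue is precisely $-in''-2C_+(q_-n)'-2q_+n'$. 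The only pitfall is the precedence convention for $C_\pm$.

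For the $H_\kappa$ evolution on the line, start from $\tfrac{d}{dt}n=\mc P_\kappa n$ and insert \eqref{PkR}:
\[
\tfrac{d}{dt}n=i\kappa^3 R(\kappa)n-i\kappa^2(m+1)C_+(\ol m+1)n+\kappa n'.
\]
First use $\kappa R(\kappa)=\mathrm{Id}-\mc L R(\kappa)$ to write $i\kappa^3R(\kappa)n=i\kappa^2 n-i\kappa^2\mc L R(\kappa)n$ and expand $C_+[(\ol m+1)n]=n+C_+(\ol m n)$; the two $i\kappa^2 n$ terms cancel, leaving $-i\kappa^2\mc L R(\kappa)n-i\kappa^2 mn-i\kappa^2(m+1)C_+(\ol m n)+\kappa n'$. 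The indispensable step is to rewrite $C_+(\ol m n)$ via Lemma~\ref{L:id0} applied with $f=R(\kappa)n$ and $g=m$: using $\mc L m=q_+-\kappa m$ (so $\ol{\mc L m}=q_--\kappa\ol m$) and $\mc L R(\kappa)n=n-\kappa R(\kappa)n$, the two terms $\mp\kappa C_+(\ol m R(\kappa)n)$ produced by the lemma cancel against each other and one obtains
\[
C_+(\ol m n)=C_+(q_- R(\kappa)n)-i\partial C_+(\ol m R(\kappa)n)-R(\kappa)n\,[1-C_-](q_+\ol m).
\]
This is the linchpin: it trades the bare powers of $\kappa$ for occurrences of the resolvent and exposes the perfect-derivative combination $\{\kappa R(\kappa)\mc L n-\kappa^2 C_+[\ol m R(\kappa)n]\}'$ (recalling $R(\kappa)\mc L n=\mc L R(\kappa)n=-i(R(\kappa)n)'-C_+(qR(\kappa)n)$ and $\{\kappa R(\kappa)\mc L n\}'=\partial(\kappa n-\kappa^2 R(\kappa)n)$). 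Feeding this in, together with one more use of \eqref{m eq} for $m$ and $q-q_-=q_+$, one collects the surviving terms into exactly RHS\eqref{m dot Hk 2}. On the circle one repeats the computation with \eqref{PkT}: the additional operator summand $i\kappa^2[\beta(\kappa)+\tint]R(\kappa)$ and the constant-term corrections are absorbed precisely as in the proof of Proposition~\ref{P:HK} (using \eqref{beta3}), while the modified transport term $[\kappa+\tint]\partial$ leaves behind the extra $[\tint]n'$.

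The main obstacle is the $H_\kappa$ bookkeeping. Each of $i\kappa^3R(\kappa)n$ and $i\kappa^2(m+1)C_+(\ol m+1)n$ --- indeed even the intermediate $-i\kappa^2\mc L R(\kappa)n$ and $-i\kappa^2 mn$ --- diverges as $\kappa\to\infty$, and these divergences can only be cancelled against one another; the resolvent identities alone merely run in a loop, and it is Lemma~\ref{L:id0} that breaks it via the cancellation of the $\kappa C_+(\ol m R(\kappa)n)$ terms noted above. Matching the statement's exact seven-term partition of the non-derivative part --- each piece shaped so that the convergence argument of Section~\ref{S:GWP} applies verbatim --- is lengthy and error-prone rather than conceptually hard, and the circle case compounds this with the $\tint$ corrections.
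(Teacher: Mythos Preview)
Your treatment of the Benjamin--Ono case is correct and coincides with the paper's. Your overall strategy for the $H_\kappa$ flow --- start from $\mc P_\kappa n$, peel off cancellations via the resolvent relation, and invoke Lemma~\ref{L:id0} with $f=R(\kappa)n$, $g=m$ --- also matches the paper. The rewriting of $C_+(\ol m n)$ that you obtain is exactly the identity the paper uses to pass from \eqref{n dot Hk v3} to \eqref{n dot Hk v4}.

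The gap is in what follows. You assert that after this single use of Lemma~\ref{L:id0} the rest is ``one more use of \eqref{m eq}'' plus bookkeeping; this is not accurate. Tracing your intermediate expression through, the residual identity one must verify reduces (after the cancellations you describe, including the one with $i\kappa q_+ n = i\kappa^2 mn + \kappa m'n - i\kappa nC_+(qm)$) to
\[
C_+(q_- m) - C_+(q_+\ol m) = iC_+\bigl(|m|^2\bigr)' + m\,[1-C_-](q_+\ol m),
\]
and this is precisely Lemma~\ref{L:id0} applied a \emph{second} time, now with $f=g=m$ (after writing $q_\pm=(\mc L+\kappa)m$, $\ol{(\mc L+\kappa)m}$ so that the $\kappa$-terms cancel). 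The paper makes this explicit: at the stage of \eqref{n dot Hk v4} one is left with the pair $i\kappa(\mc L m)n$ and $i\kappa C_+(q_+\ol m)\cdot n$, each of which diverges individually as $\kappa\to\infty$; only their sum is tame, and it is exactly this second invocation of Lemma~\ref{L:id0} that produces the two target terms $\kappa C_+(|m|^2)'\cdot n$ and $-i\kappa[1-C_-](q_+\ol m)\cdot mn$. So the missing ingredient is not merely \eqref{m eq} but a second, structurally distinct application of the commutator identity.
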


\begin{proof}
We will provide the details in the periodic setting to explain the appearance of the extra term.  The key distinction comes from \eqref{Cid}.

From \eqref{m dot BO}, which we recalled in \eqref{n dot BO v1}, under the Benjamin--Ono flow we have
\begin{align*}
\tfrac{d}{dt} n
&= - in'' - 2C_+([q -q_+]n)' - 2q_+n' \\
&= \bigl\{ - in' - C_+(qn) - C_+([q-q_+]n) \bigr\}' + (q_+n)' - 2q_+n' \\
&=\bigl\{ \mc{L}n - C_+([q-q_+]n) \bigr\}' + q'_+n - iq_+\mc{L}n - iq_+C_+(qn)\\
&=\textup{RHS\eqref{m dot  BO 3}} + [\tint ] n'.
\end{align*}

We now consider the $H_\kappa$ flow. Our starting point is \eqref{n dot Hk v1} with $\mc P_\kappa$ as defined in \eqref{PkT}.  Let us manipulate some of these terms before putting them together:
\begin{align*}
i\kappa^3 R(\kappa,q)n + \kappa\partial n &=  i\kappa^2n + \kappa\partial n - i\kappa\mc L n   + i \kappa \mc L R(\kappa) \mc L n\\
&= i\kappa^2n  + i\kappa C_+ (qn) + i \kappa\mc L R(\kappa) \mc L n,\\
i\kappa^2 \btint R(\kappa)n &= i\kappa C_+\bigl(\btint n\bigr)
	- i\kappa \btint R(\kappa) \mc L n,\\
- i\kappa^2 (m+1)C_+\bigl([\ol{m}+1]n\bigr) &= -i\kappa^2 n - i\kappa^2  C_+([m+\ol m]n) - i\kappa^2 m C_+ (\ol m n).
\end{align*}
In this way, we find
\begin{equation}\label{n dot Hk v2}
\begin{aligned}
\tfrac{d}{dt} n ={}& i \kappa \mc L R(\kappa) \mc L n - i\kappa C_+\bigl(\bigl[\kappa m +\kappa \ol m - q - \tint\bigr] n\bigr)
	- i\kappa^2 m C_+ (\ol m n) \\
& - i\kappa \btint R(\kappa) \mc L n + i\kappa^2\beta(\kappa) R(\kappa)n + \btint n'.
\end{aligned}
\end{equation}

Our next simplification involves the second term on the RHS\eqref{n dot Hk v2}; by \eqref{m eq},
\begin{align*}
\bigl[\kappa m +\kappa \ol m - q - \tint \bigr] = (\kappa m - q_+) +  (\kappa \ol m - q_-) = - \mc L m - \ol{\mc L m}.
\end{align*}
Regarding the first and fourth terms on the RHS\eqref{n dot Hk v2}, we have
\begin{align*}
i \kappa \mc L R(\kappa) \mc L n - i\kappa \btint R(\kappa) \mc L n
	= \bigl( \kappa R(\kappa) \mc L n \bigr)' - i\kappa C_+\bigl([q_+ + q_-] R(\kappa) \mc L n\bigr).
\end{align*}
Incorporating this information reveals
\begin{equation}\label{n dot Hk v3}
\begin{aligned}
\tfrac{d}{dt} n
={}& \big( \kappa R(\kappa)\mc{L} n \big)' - i\kappa q_+ R(\kappa)\mc{L}n - i\kappa^2 mC_+(\ol{m}n) + i\kappa (\mc{L}m) n  \\
& + i\kappa C_+\big[ (\ol{\mc{L}m}) n \big] - i\kappa C_+\big[ q_-\mc{L}R(\kappa)n \big] + i\kappa^2\beta(\kappa) R(\kappa)n +\btint n'.
\end{aligned}
\end{equation}

Consideration of the fifth and sixth summands on RHS\eqref{n dot Hk v3} leads us to observe
\begin{align*}
i\kappa C_+\Bigl[ ( \ol{\mc{L}m}) n  - q_- \mc{L}R(\kappa)n \Bigr]
&= i\kappa C_+\Bigl[ \ol{\mc{L}m}\cdot (\mc L +\kappa) R(\kappa) n  - \ol{(\mc{L}+\kappa)m} \cdot \mc{L}R(\kappa)n \Bigr] \\
&= i\kappa^2 C_+\Bigl[ \ol{\mc{L}m} \cdot R(\kappa) n  - \ol{m} \cdot \mc{L}R(\kappa)n \Bigr],
\end{align*}
to which we apply Lemma~\ref{L:id0}.  This yields
\begin{align*}
i\kappa C_+\Bigl[ ( \ol{\mc{L}m}) n - q_- \mc{L}R(\kappa)n \Bigr]
&= - \kappa^2 C_+\big[ \ol{m}\,R(\kappa)n \big]' + i\kappa^2 [1-C_-](q_+\ol{m})\cdot R(\kappa)n.
\end{align*}
Before using this to rewrite $\tfrac{d}{dt} n$, let us pause to observe that \eqref{beta} shows that the last term here may be profitably combined with the second to last term in \eqref{n dot Hk v3}:
\begin{align*}
 i\kappa^2[1-C_-](q_+\ol{m})\cdot R(\kappa)n + i\kappa^2\beta(\kappa) R(\kappa)n &= i\kappa^2 C_+( q_+\ol{m})\cdot R(\kappa)n \\
&= i\kappa C_+( q_+\ol{m})\cdot [ n - R(\kappa)\mc L n ].
\end{align*}
Incorporating all these deductions into \eqref{n dot Hk v3}, we find
\begin{align}\label{n dot Hk v4}
\tfrac{d}{dt} n
={}& \big( \kappa R(\kappa)\mc{L} n \big)' - i\kappa q_+ R(\kappa)\mc{L}n - i\kappa^2 mC_+(\ol{m}n) + i\kappa (\mc{L}m) n \notag \\
& - \kappa^2 C_+\big[ \ol{m}\,R(\kappa)n \big]'  - i\kappa C_+( q_+\ol{m}) \cdot R(\kappa)\mc L n  + i\kappa C_+( q_+\ol{m})\cdot  n  + \btint n'.
\end{align}

Two terms require further attention: neither $i\kappa (\mc{L}m) n$ nor $i\kappa C_+( q_+\ol{m})\cdot  n$ admit a $\kappa\to\infty$ limit.  However, the combination does!  Using the definition of $\mc{L}$ together with Lemma~\ref{L:id0}, we may write
\begin{align*}
i\kappa \bigl[ (\mc{L}m)& +C_+( q_+\ol{m} )\bigr] n\\
&= \kappa m'n - i\kappa [q - q_-] mn - i\kappa nC_+(q_- m) + i\kappa nC_+( q_+\ol{m} )\\
&= \kappa m'n - i\kappa [q - q_-] mn - i\kappa nC_+ \big\{ \ol{q_+} m - \ol{m}q_+ \big\}  \\
&= \kappa m'n - i\kappa [q - q_-] mn - i\kappa nC_+ \big\{ \ol{(\mc{L}+\kappa)m} \cdot m - \ol{m}\cdot (\mc{L}+\kappa)m \big\}  \\
&= \kappa m'n - i\kappa [q - q_-] mn - i\kappa nC_+ \big\{ \ol{\mc{L}m} \cdot m - \ol{m}\cdot\mc{L}m \big\}  \\
&= \kappa m'n - i\kappa m C_+\bigl( [q - q_-] n \bigr) + \kappa nC_+\big( |m|^2 \big)'  - i\kappa mn  [1-C_-](q_+\ol{m}).
\end{align*}
Inserting this into \eqref{n dot Hk v4} completes our treatment of the $H_\kappa$ flow.
\end{proof}

\begin{theorem}\label{t:conv}
Let $\{q_j^0\}_{j\geq 1}\subset H^\infty$ be a sequence of real-valued initial data that converges in $H^s$.  Then for all $T>0$, the corresponding $H^\infty$ solutions $q_j(t)$ to \eqref{BO} converge in $C([-T,T];H^s)$.
\end{theorem}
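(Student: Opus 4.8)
The plan is to run the method of commuting flows in the gauge variable $m$. Let $Q=\{q_j^0\}_{j\ge1}$; being convergent in $H^s$, it is bounded — say $Q\subset B^s_A$ — and equicontinuous. By Corollary~\ref{C:Qstar}, the set $Q_*$ of all joint $(\hbo,\hk)$-orbits of points of $Q$ is then bounded in $H^s$ (say $Q_*\subset B^s_{A'}$) and equicontinuous, uniformly over $\kappa\ge\kappa_0(A')$ and $|t|\le T$; in particular $\{q_j(t):j\ge1,\ |t|\le T\}$ is such a set. Put Convention~\ref{C:Akmn} in force with this $A'$, fix once and for all an auxiliary parameter $\vk\ge\kappa_0$, and pass to the gauge variables $n_j(t):=m(\vk,q_j(t))$, legitimate since each $q_j$ is a genuine $H^\infty$ solution of \eqref{BO}. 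By Proposition~\ref{P:m Diff} the map $q\mapsto m(\vk,q)$ is a diffeomorphism of \(B^s_{A'}\) onto its image with $\vk,A'$-dependent Lipschitz bounds in both directions (cf.\ \eqref{diffeo 1}, \eqref{diffeo 2}), so $\{n_j(t)\}$ is bounded and equicontinuous in $H^{s+1}$ and it suffices to show $(n_j)_j$ is Cauchy in $C([-T,T];H^{s+1})$. By the frequency-truncation argument underlying Lemma~\ref{L:equi} (bounded $+$ equicontinuous in $H^{s+1}$, together with Cauchy in a weaker norm, forces Cauchy in $H^{s+1}$), this reduces to proving $(n_j)_j$ is Cauchy in $C([-T,T];H^\sigma)$ for a single fixed $\sigma<s+1$.

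Next, for each admissible $\kappa$ interpose the regularized flow $\tilde q_j^\kappa(t):=e^{tJ\nabla\hk}q_j^0$ (an $H^\infty$ solution by Theorem~\ref{t:Hk flow}) and its gauge $\tilde n_j^\kappa:=m(\vk,\tilde q_j^\kappa)$, and telescope
\[
n_j(t)-n_k(t)=\bigl(n_j(t)-\tilde n_j^\kappa(t)\bigr)+\bigl(\tilde n_j^\kappa(t)-\tilde n_k^\kappa(t)\bigr)+\bigl(\tilde n_k^\kappa(t)-n_k(t)\bigr).
\]
The middle term is handled for each fixed $\kappa$: the $\hk$ vector field is globally Lipschitz on $(B^s_{A'})_{**}$ (proof of Theorem~\ref{t:Hk flow}), so Gronwall on $[-T,T]$ and the bi-Lipschitz gauge give $\sup_{|t|\le T}\|\tilde n_j^\kappa-\tilde n_k^\kappa\|_{H^\sigma}\le C(\kappa,T,A')\|q_j^0-q_k^0\|_{H^s}\to0$ as $j,k\to\infty$. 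The two outer terms share initial data $m(\vk,q_j^0)$, and I would estimate them by comparing the evolution equations \eqref{m dot BO 3} and \eqref{m dot Hk 2}. Writing $\delta_j^\kappa=n_j-\tilde n_j^\kappa$ (so $\delta_j^\kappa(0)=0$), split $\tfrac{d}{dt}\delta_j^\kappa$ into a part Lipschitz in $\delta_j^\kappa$ and a source equal to the difference of the two right-hand sides at the common point $\tilde n_j^\kappa$. The decisive point — and the very reason Lemma~\ref{L:best n dot} rewrites the dynamics in its particular form — is that this source tends to $0$ in a negative Sobolev norm as $\kappa\to\infty$, uniformly over $Q_*$: term by term one uses $\kappa R(\kappa)\to\mathrm{Id}$ and $\kappa m\to q_+$ together with the quantitative resolvent bounds \eqref{R est}, the equicontinuity estimates \eqref{equicty 6}, and the characterization of equicontinuity in Lemma~\ref{L:equi} (whence $\|\mc L m\|_{H^s}$, $\|\mc L R(\kappa)n\|_{H^{s+1}}$, etc.\ all vanish as $\kappa\to\infty$ on equicontinuous sets). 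A Gronwall/Duhamel argument then closes: the common linear part $-i\partial^2$ is absorbed into the free propagator $e^{-it\partial^2}$ (unitary on every $H^\sigma$), the endemic loss of two derivatives in the remaining Toeplitz-type quadratic terms is recovered by interpolating $\delta_j^\kappa$ against its $\kappa$-independent a priori $H^{s+1}$ bound from Corollary~\ref{C:Qstar}, and the products lacking classical meaning are controlled by Lemmas~\ref{L:Hs mult} and~\ref{L:+*+} — the worst ones landing in Hardy spaces where Lemma~\ref{L:+*+} applies, the mixed ones gaining regularity from the $H^{s+1}$-bounded factor. This yields $\sup_{|t|\le T}\|\delta_j^\kappa\|_{H^\sigma}\to0$ as $\kappa\to\infty$, uniformly in $j$. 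Given $\eps>0$, choose $\kappa$ so large the two outer terms are $<\eps/3$ for all $j,k$, then $N$ so large the middle term is $<\eps/3$ for $j,k\ge N$; hence $(n_j)_j$ is Cauchy in $C([-T,T];H^\sigma)$, and the proof is complete. The circle case is identical modulo the harmless translation $[\tint]n'$ recorded in Lemma~\ref{L:best n dot}.

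The step I expect to be the main obstacle is the outer-term estimate, precisely because the \eqref{BO} nonlinearity loses two derivatives — a loss that survives in \eqref{m dot BO 3} through $\{\mc L n\}'$ and $q_+\mc L n$ — so one cannot naively Gronwall in a fixed norm: the negative-regularity space in which $\delta_j^\kappa$ is propagated must be chosen so that, after pulling out $-i\partial^2$, the residual (quadratic, holomorphic) terms can be estimated with at most the loss that interpolation against the fixed $H^{s+1}$ bound of $Q_*$ can repay. Verifying, in that same norm and uniformly over $Q_*$, that the source built from \eqref{m dot Hk 2} minus \eqref{m dot BO 3} actually converges to zero — which is where the careful cancellations engineered in Lemma~\ref{L:best n dot} are consumed — is the other substantial piece of the work.
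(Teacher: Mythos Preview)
Your approach has a genuine gap: you never invoke the commutativity of the $\hbo$ and $\hk$ flows, and without it the outer-term estimate cannot be closed by the Gronwall/Duhamel scheme you describe. With your splitting, the ``Lipschitz part'' is $F_{BO}(q_j,n_j)-F_{BO}(\tilde q_j^\kappa,\tilde n_j^\kappa)$, the difference of the \eqref{BO} vector field at two points. Even after peeling off $-i\partial^2$, this contains terms such as $(\tilde q_j^\kappa)_+\,\delta'$ and $C_+\bigl((\tilde q_j^\kappa)_-\,\delta\bigr)'$ which, via Lemmas~\ref{L:Hs mult} and~\ref{L:+*+}, land only in $H^{2s-1}$ with size controlled by $\|\delta\|_{H^{s+1}}$ --- never by the weaker norm $\|\delta\|_{H^\sigma}$ you are propagating. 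Interpolating against the a priori $H^{s+1}$ bound renders these terms \emph{bounded}, not \emph{small}; the Duhamel integral then yields $\|\delta(t)\|_{H^\sigma}=O(t)$ rather than $o(1)$ as $\kappa\to\infty$. No choice of $\sigma$ makes this loop close, because the derivative loss in the \eqref{BO} nonlinearity is not compensated by any smallness tied to $\kappa$. (Switching to the other split, with the $\hk$-difference as the Lipschitz part, trades this for a $\kappa$-dependent Lipschitz constant that blows up in the Gronwall exponential.)

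The paper's remedy is to exploit commutativity and write $e^{tJ\nabla\hbo}=e^{tJ\nabla(\hbo-\hk)}\circ e^{tJ\nabla\hk}$, thereby replacing the comparison of two flows by the statement that the single \emph{difference flow} $e^{tJ\nabla(\hbo-\hk)}$ is close to the identity, uniformly for initial data in $Q_*$. For a single flow no Gronwall is needed: one computes $\tfrac{d}{dt}n$ along the difference flow --- this is precisely \eqref{n dot diff}, the subtraction of \eqref{m dot Hk 2} from \eqref{m dot BO 3} --- and verifies term by term that it is small in $H^{-2}$ uniformly over $Q_*$, using \eqref{equicty 6}, \eqref{basicB}, $\|\kappa m-q_+\|_{H^s}\to 0$, and Lemmas~\ref{L:Hs mult} and~\ref{L:+*+}. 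The fundamental theorem of calculus then gives $\|n(t)-n(0)\|_{H^{-2}}\to 0$, which equicontinuity upgrades to $H^{s+1}$. The commutation step is exactly what converts an unclosable stability estimate for the rough \eqref{BO} flow into a pure smallness estimate for a vector field; this is the idea missing from your proposal.
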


\begin{proof}
Let $Q = \{ q_j^0 : j\geq 1 \}$ and let $Q_*$ be defined as in~\eqref{Qstar}.  By Corollary~\ref{C:Qstar}, $Q_*$ is bounded and equicontinuous in $H^s$.	

As the $\hk$ and $\hbo$ flows commute (cf. Theorem~\ref{t:Hk flow}), we may write	
\begin{equation*}
q_j(t)  = e^{tJ\nabla \hbo} (q_j^0) = e^{tJ\nabla(\hbo-\hk)} \circ e^{tJ\nabla\hk} (q_j^0) 
\end{equation*}
and so
\begin{equation}
\begin{aligned}
\sup_{|t|\leq T} \norm{ q_j(t) - q_\ell(t) }_{H^s}
\leq{}& \sup_{|t|\leq T} \bnorm{ e^{tJ\nabla\hk} (q_j^0) - e^{tJ\nabla\hk} (q_\ell^0) }_{H^s} \\
&+ 2 \sup_{q\in Q_*}\,\sup_{|t|\leq T} \bnorm{ e^{tJ\nabla(\hbo-\hk)} (q) - q }_{H^s} .
\end{aligned}
\label{conv 1}
\end{equation}
By the well-posedness of the $\hk$ flows, the first term on RHS\eqref{conv 1} converges to zero as $j,\ell\to\infty$ for each fixed $\kappa\geq \kappa_0$.  Therefore, it suffices to show that
\begin{equation}\label{conv 2}
\lim_{\kappa\to\infty}\, \sup_{q\in Q_*}\,\sup_{|t|\leq T} \bnorm{ e^{tJ\nabla(\hbo-\hk)} (q) - q }_{H^s} = 0 .
\end{equation}

We adopt the following notation: given initial data $q\in Q_*$, we write
\begin{equation*}
q(t) = e^{tJ\nabla(\hbo-\hk)} (q)
\end{equation*}
for the corresponding solution to the difference flow and $n(t) := n(x;\vk,q(t))$.  By the diffeomorphism property demonstrated in Proposition~\ref{P:m Diff}, \eqref{conv 2} will follow from
\begin{equation}\label{conv 3}
\lim_{\kappa\to\infty}\,\sup_{q\in Q_*}\,\sup_{|t|\leq T} \norm{ n(t) - n(0) }_{H^{s+1}} = 0. 
\end{equation}

Note that as $Q_*$ is bounded and equicontinuous in $H^s$, the diffeomorphism property together with the translation identity~\eqref{m id} yield that the set
\begin{equation*}
\big\{ n(x;\vk,q(t)) : q\in Q_*,\ t\in\R \big\}
\end{equation*}
is bounded and equicontinuous in $H^{s+1}$.  As equicontinuity in a high regularity space together with convergence in a low regularity space imply convergence in the high regularity space, we see that to prove \eqref{conv 3} it suffices to show   
\begin{equation}\label{conv 4}
\lim_{\kappa\to\infty}\,\sup_{q\in Q_*}\,\sup_{|t|\leq T} \norm{ n(t) - n(0) }_{H^{-2}} = 0. 
\end{equation}

By the fundamental theorem of calculus, \eqref{conv 4} is a consequence of 
\begin{equation}\label{conv 5}
\lim_{\kappa\to\infty}\,\sup_{q\in Q_*}\,\sup_{|t|\leq T} \norm{ \tfrac{dn}{dt}  }_{H^{-2}} = 0,
\end{equation}
where the time derivative of $n$ is dictated by the difference flow.  The equation for this evolution may be deduced immediately from Lemma~\ref{L:best n dot}. In taking this difference, the distinction between the two geometries disappears.

Combining  \eqref{m dot BO 3}, \eqref{m dot Hk 2}, and the identity $\mc L - \kappa R(\kappa)\mc L = \mc L  R(\kappa) \mc L$, we find  
\begin{align}
\tfrac{d}{dt} n
={}& \big\{ \mc{L} R(\kappa)\mc{L} n  - C_+\big[ \big( q_- - \kappa^2\ol{m} R(\kappa) \big) n \big] \big\}' - i q_+\mc{L} R(\kappa)\mc{L}n \notag\\
&+ \big(q'_+ - \kappa m'\big) n - i(q_+-\kappa m) C_+(qn) - i\kappa m C_+([q_-  - \kappa \ol m]n) \label{n dot diff}\\
&+ i \kappa C_+(q_+\ol{m}) \cdot \mc{L} R(\kappa) n - \kappa C_+\big( |m|^2 \big)' \cdot n + i \kappa  [1-C_-](q_+\ol{m}) \cdot mn.\!\!\!\! \notag
\end{align}
We will verify \eqref{conv 5} by showing that each of these terms converges to zero in $H^{-2}$ as $\kappa\to\infty$, uniformly for $q(t)\in (Q_*)_* = Q_*$.  Before delving in the details of this, let us recall some basic bounds that we will use repeatedly:
\begin{align}\label{basicB}
\|q\|_{H^s} + \| n\|_{H^{s+1}} + \| m\|_{H^{s+1}_\kappa} + \| \kappa m\|_{H^{s}} \lesssim 1
\end{align}
uniformly for $q\in Q_*$ and $\kappa\geq \kappa_0$.  The first two of these were noted above; the latter two follow from \eqref{m est}.

For the first term in \eqref{n dot diff}, we use~\eqref{L est} and \eqref{equicty 6} to see that
\begin{align*}
\bnorm{ \big\{ \mc{L} R(\kappa,q)\mc{L} n \big\}' }_{H^{-2}}
\lesssim \norm{ \mc{L} R(\kappa,q)\mc{L} n }_{H^{s}} \lesssim \norm{R(\kappa,q)\mc{L} n }_{H^{s+1}}\to 0 \qtq{as} \kappa\to \infty,
\end{align*}
uniformly for $q\in Q_*$.

Using \eqref{m eq},  \eqref{L est}, and \eqref{equicty 6}, we obtain
\begin{equation}\label{equicty 7}
\norm{ \kappa m - q_+ }_{H^s}
= \norm{\mc{L}m}_{H^s}
\lesssim \norm{m}_{H^{s+1}}
\to 0 \quad\text{as }\kappa\to\infty
\end{equation}
uniformly for $q\in Q_*$.  Employing $\kappa R(\kappa) = 1 - \mc L  R(\kappa)$ and \eqref{E:Hs mult}, we deduce
\begin{align*}
\bnorm{  C_+\big[ \big( q_- - \kappa^2\ol{m}& R(\kappa,q) \big) n \big]' }_{H^{-2}}\\
&\lesssim \norm{ \big( q_- - \kappa\ol{m} \big) n }_{H^{s}} + \kappa \norm{ \ol{m} \mc{L}R(\kappa,q) n }_{H^{s}} \\
&\lesssim \norm{ q_+ - \kappa m }_{H^{s}} \norm{n}_{H^{s+1}} + \norm{\kappa m}_{H^{s}} \norm{ \mc{L}R(\kappa,q) n }_{H^{s+1}}.
\end{align*}
By \eqref{equicty 6}, \eqref{basicB}, and \eqref{equicty 7}, this converges to zero as $\kappa\to \infty$ uniformly for $q\in Q_*$.

Next, we use the estimates~\eqref{low reg alg} and~\eqref{L est} to bound
\begin{equation*}
\norm{ q_+\mc{L} R(\kappa,q)\mc{L}n }_{H^{-2}}
\lesssim \norm{q_+}_{H^s} \norm{\mc{L} R(\kappa,q)\mc{L}n }_{H^{s}}
\lesssim \norm{q_+}_{H^s} \norm{R(\kappa,q)\mc{L}n }_{H^{s+1}}.
\end{equation*}
By \eqref{equicty 6}, this converges to zero as $\kappa\to \infty$ uniformly for $q\in Q_*$.
	
Using the triangle inequality, \eqref{E:Hs mult}, and~\eqref{low reg alg}, we may bound
\begin{align*}
\bnorm{ \big( \kappa m' - q'_+\big) n }_{H^{-2}}
&\leq \bnorm{ \big\{ ( \kappa m - q_+) n \big\}' }_{H^{-2}} + \bnorm{ ( \kappa m - q_+) n' }_{H^{-2}} \\
&\lesssim \norm{  ( \kappa m - q_+) n}_{H^{s}} + \bnorm{ ( \kappa m - q_+) n' }_{H^{2s-1}} \\
&\lesssim \norm{ \kappa m - q_+ }_{H^{s}} \norm{n}_{H^{s+1}} + \norm{ \kappa m - q_+ }_{H^s} \snorm{ n' }_{H^{s}} \\
&\lesssim \norm{ \kappa m - q_+ }_{H^{s}} \norm{n}_{H^{s+1}}.
\end{align*}
By \eqref{basicB} and \eqref{equicty 7}, this converges to zero as $\kappa\to \infty$ uniformly for $q\in Q_*$.
	
Using \eqref{low reg alg} and~\eqref{E:Hs mult} again, we may bound
\begin{align*}
\norm{ ( q_+ - \kappa  m ) C_+(q n) }_{H^{-2}}
&\lesssim \norm{ q_+ - \kappa  m }_{H^{s}} \norm{ q n }_{H^{s}} \\
&\lesssim \norm{ q_+ - \kappa  m }_{H^{s}} \norm{q}_{H^{s}} \norm{ n }_{H^{s+1}} .
\end{align*}
This converges to zero as $\kappa\to \infty$ uniformly for $q\in Q_*$ in view of \eqref{basicB}, \eqref{equicty 7}.
	
By \eqref{low reg alg}, and~\eqref{E:Hs mult}, we have
\begin{align*}
\bnorm{ \kappa m C_+\big[ (q_- - \kappa\ol{m}) n \big] }_{H^{-2}} 
&\lesssim \norm{ \kappa m }_{H^{s}} \bnorm{ (q_- - \kappa\ol{m}) n }_{H^{s}} \\
&\lesssim \norm{ \kappa m }_{H^{s}} \norm{ q_+ - \kappa m }_{H^{s}} \norm{n}_{H^{s+1}}.
\end{align*}
This converges to zero as $\kappa\to \infty$ uniformly for $q\in Q_*$ because of \eqref{basicB}, \eqref{equicty 7}.
	
Using \eqref{low reg alg} and~$\kappa R(\kappa) = 1 - R(\kappa) \mc L$, followed by \eqref{E:Hs mult} and~\eqref{L est}, we have
\begin{align*}
\norm{ \kappa C_+(q_+\ol{m}) \cdot \mc{L}  R(\kappa,q) n }_{H^{-2}}
&\lesssim \norm{ q_+\ol{m} }_{H^{s}} \big[ \norm{ \mc{L} n }_{H^{s}} + \norm{ \mc{L} R(\kappa,q) \mc{L} n }_{H^{s}} \big] \\
&\lesssim \norm{ q_+ }_{H^{s}} \norm{ m}_{H^{s+1}} \big[\norm{ n }_{H^{s+1}} + \norm{ R(\kappa,q) \mc{L} n }_{H^{s+1}}\big],
\end{align*}
which converges to zero as $\kappa\to \infty$ uniformly for $q\in Q_*$ in view of \eqref{equicty 6}, \eqref{basicB}.
	
By the triangle inequality and the estimates~\eqref{E:Hs mult} and~\eqref{low reg alg}, we may bound
\begin{align*}
\bnorm{ \kappa C_+\big( |m|^2 \big)' \cdot n }_{H^{-2}}
&\leq \kappa \bnorm{ C_+\big( |m|^2 \big) \cdot n }_{H^{-1}} + \kappa \bnorm{ C_+\big( |m|^2 \big) \cdot n' }_{H^{-2}} \\
&\lesssim \kappa \norm{ C_+\big( |m|^2 \big) }_{H^{s}} \bigl[ \bnorm{ n }_{H^{s+1}} + \bnorm{ n' }_{H^{s}} \bigr]\\
&\lesssim \kappa \norm{ m }_{H^s}\norm{ m }_{H^{s+1}}\norm{ n }_{H^{s+1}},
\end{align*}
which converges to zero as $\kappa\to \infty$ uniformly for $q\in Q_*$ as follows from \eqref{basicB} and \eqref{equicty 7}.
	
Finally, using the estimates~\eqref{low reg alg} and~\eqref{E:Hs mult}, we have
\begin{align*}
\norm{ \kappa [1-C_-](q_+\ol{m}) \cdot mn  }_{H^{-2}}
&\lesssim \kappa \norm{ q_+\ol{m} }_{H^{s}} \norm{ mn }_{H^{s}} \\
&\lesssim \kappa \norm{ q_+ }_{H^{s}} \norm{ m}_{H^{s+1}} \norm{ m }_{H^{s}} \norm{ n }_{H^{s+1}},
\end{align*}
which converges to zero as $\kappa\to \infty$ uniformly for $q\in Q_*$ in view of \eqref{basicB}, \eqref{equicty 7}.
	
Collecting all our estimates, we deduce \eqref{conv 5}, which completes the proof of the theorem.
\end{proof}

\begin{proof}[Proof of Theorem \ref{t:main}]
By the prior work discussed in the introduction, it suffices to consider $-\frac{1}{2} < s < 0$.  We want to show that the solution map $\Phi$ for~\eqref{BO} extends uniquely from $H^\infty$ to a jointly continuous map $\Phi : \R\times H^s \to H^s$.
	
Given initial data $q_0\in H^s$, we define $\Phi(t,q_0)$ as follows:  Let $\{ q_j^0 \}_{j\geq 1}$ be a sequence of $H^\infty$ functions that converges to $q_0$ in $H^s$.  Applying Theorem~\ref{t:conv} to the sequence $\{ q_j^0 \}_{j\geq 1}$, we see that the corresponding $H^\infty$ solutions $q_j(t)$ to~\eqref{BO} converge in $H^s$ and the limit is independent of the sequence $\{ q_j^0 \}_{j\geq 1}$.  Consequently,
\begin{equation*}
\Phi(t,q_0) := \lim_{j\to\infty} q_j(t)
\end{equation*}
is well-defined.
	
We must show that $\Phi$ is jointly continuous.  Fix $T>0$ and let $\{ q_j^0 \}_{j\geq 1}$ be a sequence of initial data in $H^s$ that converges to $q_0$ in $H^s$.  By the definition of $\Phi$, we may choose another sequence $\wt{q}_j(t)$ of $H^\infty$ solutions to~\eqref{BO} such that
\begin{equation}\label{2:36}
\sup_{|t|\leq T}  \norm{ \Phi(t,q_j^0) - \wt{q}_j(t) }_{H^s} \to 0 \quad\text{as }j\to\infty .
\end{equation}
In particular, $\wt{q}_j(0) \to q_0$ in $H^s$, and so Theorem~\ref{t:conv} yields
\begin{equation}\label{2:37}
\sup_{|t|\leq T}  \norm{ \wt{q}_j(t) - \Phi(t,q_0) }_{H^s} \to 0 \quad\text{as }j\to\infty .
\end{equation}

Given $\{t_j\}\subset[-T,T]$ that converges to some $t\in [-T,T]$, we may bound
\begin{align*}
&\norm{ \Phi(t_j,q_j^0) - \Phi(t,q_0) }_{H^s}\\
&\qquad\leq \norm{ \Phi(t_j,q_j^0) - \wt{q}_j(t_j) }_{H^s} +  \norm{ \wt{q}_j(t_j)-\wt{q}_j(t)  }_{H^s} + \norm{\wt{q}_j(t)  - \Phi(t,q_0) }_{H^s}\\
&\qquad\leq \sup_{|t|\leq T}  \norm{ \Phi(t,q_j^0) - \wt{q}_j(t) }_{H^s} + \norm{ \wt{q}_j(t_j)-\wt{q}_j(t)  }_{H^s} +\sup_{|t|\leq T}  \norm{ \wt{q}_j(t) - \Phi(t,q_0) }_{H^s}.
\end{align*}
The right-hand side above converges to zero as $j\to\infty$ by \eqref{2:36}, \eqref{2:37}, and Theorem~\ref{t:conv}.  This demonstrates that $\Phi$ is jointly continuous.
\end{proof}

\section{The tau function and virial identities for the full hierarchy}\label{S:L&P} 

This section presents two new families of identities.  The first is Theorem~\ref{T:Gerard}, which generalizes G\'erard's explicit formula \cite{Gerard2022}; the second is Theorem~\ref{T:Virial}, which presents virial-type identities fulfilling the promises made at the end of Section~\ref{S:gauge}.

Throughout this section we will work on the line and consider the flow generated by employing $\beta(\kappa;q)$ as Hamiltonian.  This leads to the dynamics
\begin{equation}\label{beta flow}
\tfrac{d}{dt} q = \big(m + \ol{m} + |m|^2\big)',
\end{equation}
whose well-posedness in $H^s$ follows from the arguments presented in Theorem~\ref{t:Hk flow}.  Indeed, the $H_\kappa$ flow differs from the $\beta(\kappa)$ flow only by a time rescaling and a spatial translation.  Because of this relationship, Proposition~\ref{P:HK} also provides us with a Lax pair representation of this flow, namely,
\begin{equation}\label{PbkR}
\mcPbk:= -i\kappa(\mc L +\kappa)^{-1} + i (m+1)C_+(\ol{m}+1) .
\end{equation}

We will be studying the evolution \eqref{beta flow} with initial data $q^0\in L^2$.  The equation \eqref{beta flow} is also well-posed in this finer topology, as can be shown by mimicking the proof of Theorem~\ref{t:Hk flow}.  To avoid such repetition, we offer the following alternate argument.  By \eqref{poisson 2}, we know that the flow \eqref{beta flow} preserves the $L^2$ norm.  In this way, continuity of the data-to-solution map follows from mere weak continuity, which may be derived from $H^s$ well-posedness.

The central theme of this section is how the special properties of the Lax representation \eqref{PbkR} lead quickly to the sought-after formulas.   In addition to the special properties 
\begin{equation}\label{PbkR spec}
\tfrac{d}{dt}  q_+(t) = \mcPbk q_+(t) \qtq{and}  \tfrac{d}{dt}  n(x;q(t)) = \mcPbk n(x;q(t))
\end{equation}
that played an important role in the previous section, we also need two more.

One of these additional properties is that $\mcPbk 1=0$. Strictly speaking, this is only true in the circle setting, where it follows from the arguments used to prove \eqref{beta4}.  On the line, the constant function $1$ does not belong to the natural domain of $\mcPbk$.  We will prove a proper analogue in Lemma~\ref{L:P10}.

The second additional property is the value of the commutator between $\mcPbk$ and the operator $X$ corresponding to multiplication by $x$ presented in Lemma~\ref{L:X}; this is the subject of Lemma~\ref{L:comm 2}.  

As motivation for such preliminaries, let us now present our generalization of G\'erard's explicit formula from \cite{Gerard2022}: 

\begin{theorem}\label{T:Gerard}
Let $A>0$ and $\kappa_0(A)$ satisfy Convention~\ref{C:Akmn}.  Then for any $q^0\in\BA\cap L^2(\R)$ and any $\kappa\geq\kappa_0$, the solution $q(t)$ to \eqref{beta flow} with initial data $q^0$ satisfies
\begin{equation}\label{gerard beta R}
q_+(t,z) = \tfrac{1}{2\pi i} I_+ \Big( \big( X - t\kappa R(\kappa;q^0)^2 - z \big)^{-1} q^0_+\Big)
\end{equation}
for all $\Im z > 0$.
\end{theorem}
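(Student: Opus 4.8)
The plan is to exploit the Lax representation \eqref{PbkR} together with the special property \eqref{PbkR spec} governing the evolution of $q_+$, and to identify the right-hand side of \eqref{gerard beta R} as the solution of the same linear ODE in $L^2_+$. Introduce the one-parameter family of operators
\[
\mc A(t) := X - t\kappa R(\kappa;q^0)^2
\]
and the vector $\Psi(t) := (\mc A(t) - z)^{-1} q^0_+$, defined for $\Im z>0$ since $X-z$ is boundedly invertible (Lemma~\ref{L:X}) and $t\kappa R(\kappa;q^0)^2$ is a bounded perturbation. By the Cauchy integral formula \eqref{CIT}, RHS\eqref{gerard beta R} equals $\bigl[(\mc A(t)-z)^{-1} q^0_+\bigr](z) = \Psi(t)(z)$; but more usefully, applying \eqref{CIT} with spectral parameter $z$ to \emph{any} Hardy-space function shows it suffices to prove $\Psi(t) = q_+(t)$ as elements of $L^2_+$. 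Both sides agree at $t=0$ (there $\mc A(0)-z = X-z$, and $(X-z)^{-1}q^0_+$ evaluated via $\tfrac{1}{2\pi i}I_+$ returns $q_+(0,z)=q^0_+(z)$ by \eqref{CIT}), so the task reduces to checking they solve the same linear ODE.

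**Key steps.** First I would differentiate $\Psi(t)$ in $t$: since $\tfrac{d}{dt}(\mc A(t)-z) = -\kappa R(\kappa;q^0)^2$ is a \emph{constant} (time-independent) bounded operator, the resolvent derivative formula gives
\[
\tfrac{d}{dt}\Psi(t) = (\mc A(t)-z)^{-1}\,\kappa R(\kappa;q^0)^2\,(\mc A(t)-z)^{-1} q^0_+ = (\mc A(t)-z)^{-1}\,\kappa R(\kappa;q^0)^2\,\Psi(t).
\]
The crux is then to show this matches the evolution $\tfrac{d}{dt} q_+(t) = \mcPbk q_+(t)$. This requires an \emph{intertwining} identity: writing $\mc L_0(t) := \mc L(q(t))$ and $R(t) := R(\kappa;q(t))$, I expect that $\mcPbk$, expressed via \eqref{PbkR}, conjugates the resolvent $R(t)^2$ through the flow in a controlled way. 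Concretely, the strategy is to prove that $\mc A(t) - z$ intertwines the free-type multiplication structure with the true Lax dynamics — that is, that $q_+(t) = (\mc A(t)-z)^{-1}\cdot(\text{something independent of }t\text{ when pushed forward by the flow})$, which is exactly the content of the claimed formula. The honest route is: (i) use the commutator $[\mcPbk, X]$ from Lemma~\ref{L:comm 2} (to be proved in the paper), and the facts $\tfrac{d}{dt}\mc L = [\mcPbk,\mc L]$, hence $\tfrac{d}{dt}R(\kappa;q(t)) = [\mcPbk, R(\kappa;q(t))]$, to compute $\tfrac{d}{dt}$ of the operator $X - t\kappa R(\kappa;q(t))^2$ conjugated back by the flow $U(t)$ solving $\tfrac{d}{dt}U = \mcPbk U$, $U(0)=\mathrm{Id}$; (ii) show this conjugated operator is $t$-independent using that $\kappa R(\kappa;q^0)^2$ is precisely the ``velocity'' generated by $\beta(\kappa)$ acting on $X$ — i.e. $[\mcPbk, X] = -\kappa R(\kappa;q(t))^2$ up to terms annihilating $q_+$ (here the property $\mcPbk$-analogue of $\mc P_\kappa 1 = 0$ from Lemma~\ref{L:P10} is what kills the inhomogeneous part, since $[\mcPbk,X]$ carries a rank-one piece proportional to $q_+ I_+(\cdot)$ by analogy with \eqref{comm 1}); (iii) conclude $U(t)^{-1}(\mc A(t)-z)U(t) = X - z$ as operators, so $\Psi(t) = (\mc A(t)-z)^{-1}q^0_+ = U(t)(X-z)^{-1}U(t)^{-1}q^0_+$, and since $U(t)^{-1}q^0_+ = q_+(0)$-ish under the flow... more precisely $U(t)q^0_+ = q_+(t)$ by \eqref{PbkR spec}, I would track carefully that $\Psi(t) = U(t)(X-z)^{-1}q^0_+$ and then apply $\tfrac{1}{2\pi i}I_+$.

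**Main obstacle.** The hard part will be step (ii): verifying that the commutator $[\mcPbk, X]$ equals $-\kappa R(\kappa;q(t))^2$ modulo a rank-one operator that annihilates $q_+(t)$, and handling the domain issues — $X$ is unbounded and non-selfadjoint on $L^2_+(\R)$, $\mcPbk$ contains $i(m+1)C_+(\ol m+1)$ which is Toeplitz-type (so its commutator with $X$ is rank one, cf. \eqref{comm 1}), and the resolvent term $-i\kappa(\mc L+\kappa)^{-1}$ commutes with $\mc L$ but not with $X$. The cancellation of inhomogeneous terms is delicate: it is exactly where the ``first term'' of $\mcPbk$ (the $R(\kappa)$ piece, emphasized as new in the introduction) earns its keep, via the $\mcPbk 1 = 0$ analogue. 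I would also need to justify that $I_+$ commutes with the relevant limits and that all manipulations are legitimate for $q^0 \in L^2$ (not merely $H^\infty$) by the approximation and weak-continuity argument already sketched before the theorem statement. A secondary technical point is that $\kappa R(\kappa;q^0)^2 = R(\kappa;q^0) - \kappa R(\kappa;q^0)^2 \cdot(\text{corrections})$... rather, $\kappa R(\kappa)^2 = -\tfrac{d}{d\kappa}\kappa R(\kappa) + R(\kappa)$, the combination $\phi(\mc L)+\mc L\phi'(\mc L)$ with $\phi(E)=\tfrac{\kappa}{E+\kappa}$ from \eqref{psi from phi}, which is the conceptual reason this specific operator appears — and I would use that identification to connect back cleanly to the general $\tau$-function formula \eqref{tau phi}.
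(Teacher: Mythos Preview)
Your overall architecture---propagator $U(t)$, commutator $[X,\mcPbk]$, the $\mcPbk 1=0$ analogue, and a density reduction---matches the paper, but there is a genuine gap in what you are trying to prove and in how the ingredients slot together.

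First, the claim that it suffices to show $\Psi(t)=q_+(t)$ in $L^2_+$ is false. The formula \eqref{CIT} reads $f(z)=\tfrac{1}{2\pi i}I_+\bigl((X-z)^{-1}f\bigr)$; it does \emph{not} say $\tfrac{1}{2\pi i}I_+(\Psi(t))=\Psi(t)(z)$, because $\mc A(t)-z\neq X-z$ once $t\neq 0$. In fact $\Psi(t)\neq q_+(t)$ in general. What the argument actually yields is $\Psi(t)=U(t)^*(X-z)^{-1}U(t)q^0_+=U(t)^*(X-z)^{-1}q_+(t)$; your conjugation in step~(iii) has $U$ and $U^*$ interchanged, which leads you to the wrong formula $\Psi(t)=U(t)(X-z)^{-1}q^0_+$, whose $I_+$ would be $t$-independent.

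Second, the commutator is cleaner than you expect: Lemma~\ref{L:comm 2} gives $[X,\mcPbk]=-\kappa R(\kappa;q)^2$ \emph{exactly}, with no rank-one remainder. The rank-one pieces coming from $[X,C_+q]$ (applied inside $-i\kappa R$) and from the Toeplitz part $i(m+1)C_+(\ol m+1)$ cancel against each other; this is precisely where the ``extra'' first term in \eqref{PbkR} earns its keep. So Lemma~\ref{L:P10} is not needed to clean up the commutator. Its actual role is at the very last step: from $\Psi(t)=U(t)^*(X-z)^{-1}q_+(t)$ one must pass $U(t)^*$ through $I_+$, i.e.\ show $\langle\chi_y,U(t)^*g\rangle-\langle\chi_y,g\rangle\to 0$, which follows from $U(t)^*\chi_y-\chi_y\to 0$ and this in turn from $\mcPbk\chi_y\to 0$. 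With these two corrections your outline becomes the paper's proof.
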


Although \eqref{gerard beta R} only contains the positive frequency part of $q(t)$ and only off the real axis, this is sufficient to recover the entire waveform; indeed,
$$
q(t,x) = \lim_{y\downarrow 0} \Bigl[ q_+(t,x+iy) + \ol{ q_+(t,x+iy) } \;\! \Bigr]
$$
in $L^2(\R)$ sense.

Our next lemma gives the promised line analogue of the relation $\mc P^\beta_\kappa 1 =0$ valid on the circle.

\begin{lemma}\label{L:P10}
Let $A>0$ and $\kappa_0(A)$ satisfy Convention~\ref{C:Akmn}.  Then 
\begin{equation}\label{E:P10}
\chi_y(x)=\tfrac{iy}{x+iy} \qtq{satisfies} \lim_{y\to\infty} \mcPbk \chi_y = 0
\end{equation}
in $L^2_+$-sense uniformly for $q$ in $L^2$-compact subsets of $\BA \cap L^2(\R)$.
\end{lemma}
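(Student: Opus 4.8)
The strategy is to compute $\mcPbk \chi_y$ explicitly and show each piece vanishes in $L^2_+$ as $y\to\infty$. Recall from \eqref{PbkR} that
\begin{equation*}
\mcPbk = -i\kappa(\mc L+\kappa)^{-1} + i(m+1)C_+(\ol m+1),
\end{equation*}
so the first task is to understand how each summand acts on $\chi_y$. For the resolvent term, the key input is the identity $(\mc L+\kappa)^{-1} = R_0(\kappa) + [R(\kappa)-R_0(\kappa)]$ together with the explicit formula $\widehat{\chi_y}(\xi) = \sqrt{2\pi}\,y e^{-\xi y}\,1_{[0,\infty)}(\xi)$, which shows $\chi_y$ concentrates at frequency zero with $\|\chi_y\|_{L^\infty}\le 1$ and $I_+(\chi_y) = \sqrt{2\pi}\,y \to \infty$ but $\widehat{\chi_y}(\xi)\to 0$ pointwise for $\xi>0$ while $\|\chi_y\|_{H^{-1/2}_\kappa}$ stays bounded (indeed $\to 0$). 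The bound \eqref{R est} on $R(\kappa)-R_0(\kappa)$ will dispatch the difference $[R(\kappa)-R_0(\kappa)]\chi_y$, since $\kappa R_0(\kappa)\chi_y$ computed in Fourier variables is $\widehat{\kappa R_0(\kappa)\chi_y}(\xi) = \frac{\kappa}{\xi+\kappa}\widehat{\chi_y}(\xi)$, which tends to $\chi_y$'s "mass at zero" but whose $L^2_+$ difference from $\chi_y$ goes to $0$ as $y\to\infty$ for fixed $\kappa$.

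The heart of the matter is a cancellation: as $y\to\infty$, one expects $-i\kappa(\mc L+\kappa)^{-1}\chi_y \to -i\chi_\infty$ (morally the constant $1$) while $i(m+1)C_+(\ol m+1)\chi_y \to i(m+1)C_+(\ol m+1)\cdot 1$, and these should cancel because $(m+1)C_+[(\ol m+1)\cdot 1] = (m+1)C_+(\ol m+1)$, and $C_+(\ol m+1) = 1 + C_+\ol m = 1$ on the line (since $\ol m \in H^{s+1}_-$, so $C_+\ol m=0$), giving $(m+1)\cdot 1 = m+1$; meanwhile $\kappa(\mc L+\kappa)^{-1}$ applied to "the constant" should also yield something like $m+1$. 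Concretely, the clean route is to write $\chi_y = R_0(\kappa)[(\mc L_0+\kappa)\chi_y]$ and use the resolvent identity $R(\kappa) = R_0(\kappa) - R(\kappa)C_+ q R_0(\kappa)$ to get $\kappa R(\kappa)\chi_y$ in terms of $\chi_y$ and $m = R(\kappa)q_+$; combined with $C_+(qf)$ for $f=\chi_y$ and the commutator formula \eqref{comm 1} (valid for smooth $q$, extended by density using the compactness hypothesis), the terms should algebraically collapse to an expression that is $O(\|\chi_y - \chi_{y'}\|)$-type small, i.e. controlled by the pointwise decay $\widehat{\chi_y}(\xi)\to 0$ for $\xi>0$ uniformly on compact frequency sets, together with the uniform $H^{s+1}_\kappa$ bound on $m$ from \eqref{m est}.

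For the uniformity over $L^2$-compact subsets $K\subset \BA\cap L^2(\R)$: a compact set admits a finite $\eta$-net, and the map $q\mapsto m$ is Lipschitz from $L^2$-bounded subsets of $\BA$ into $H^{s+1}$ (this follows from \eqref{dm}, \eqref{diffeo 1}, and \eqref{m est} exactly as in the proof of Theorem~\ref{t:Hk flow}), so $\mcPbk\chi_y$ depends continuously on $q$ in the relevant sense, and the $y\to\infty$ limit being zero for each fixed $q$ upgrades to uniformity over $K$ by a standard $3\varepsilon$-argument. \textbf{The main obstacle} I anticipate is bookkeeping the cancellation between the resolvent term and the Toeplitz term cleanly: naively each blows up like $\kappa$ or like $I_+(\chi_y)$, and one must organize the algebra — most likely via the identity $\kappa R(\kappa)\chi_y = \chi_y - R(\kappa)\mc L\chi_y$ and $(\mc L+\kappa)\chi_y = \kappa\chi_y - iC_+(q\chi_y) - \tfrac{i}{2\pi}q_+ I_+(\chi_y)$ type relations — so that the divergent contributions $I_+(\chi_y)q_+$ appearing in $(\mc L+\kappa)^{-1}$ (via \eqref{comm 1}) exactly match those produced by $(m+1)C_+(\ol m+1)\chi_y$, leaving a manifestly $L^2_+$-convergent remainder. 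Identifying the precise matching, and checking it survives the passage from $H^\infty$ to general $q\in\BA\cap L^2$ by density, is where the real work lies.
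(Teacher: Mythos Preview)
Your high-level picture is right: the two summands in $\mcPbk$ each act like a divergent multiple of the ``constant function'' on $\chi_y$, and the difference is small because $\kappa R(\kappa)$ applied to the constant morally gives $m+1$ while $(m+1)C_+(\ol m+1)\cdot 1 = m+1$ (using $C_+\ol m=0$ on the line). But the execution plan has a real error and an unnecessary detour.

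The error is the claim that $\|\chi_y\|_{H^{-1/2}_\kappa}$ stays bounded. It does not: since $\widehat{\chi_y}(\xi)=\sqrt{2\pi}\,y e^{-\xi y}$, one computes
\[
\|\chi_y\|_{H^{-1/2}_\kappa}^2 = 2\pi y^2\!\int_0^\infty\! \frac{e^{-2\xi y}}{\xi+\kappa}\,d\xi \ \sim\ \frac{\pi y}{\kappa}\quad\text{as }y\to\infty,
\]
and indeed $\|\chi_y\|_{H^\sigma_\kappa}\sim\sqrt{y}$ for every fixed $\sigma$ and $\kappa$. So invoking \eqref{R est} on $[R(\kappa)-R_0(\kappa)]\chi_y$ yields a bound that diverges; that route does not close. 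The detour is the plan to use \eqref{comm 1} and track divergent $I_+(\chi_y)q_+$ contributions. Those terms never need to appear if you organize the cancellation differently.

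The paper's argument is much more direct. Write
\[
\mcPbk\chi_y \;=\; -i\bigl[\kappa R(\kappa)-(m+1)\bigr]\chi_y \;+\; i(m+1)\,C_+(\ol m\,\chi_y),
\]
using $C_+\chi_y=\chi_y$. For the second term, $C_+\ol m=0$ lets you replace $\ol m\chi_y$ by $-\ol m(1-\chi_y)$, giving the bound $\|(m+1)C_+(\ol m\chi_y)\|_{L^2}\lesssim (1+\|m\|_{H^{s+1}})\|\ol m(1-\chi_y)\|_{L^2}$. For the first term, the resolvent identity $\kappa R(\kappa)=\kappa R_0(\kappa)+R(\kappa)C_+q\,\kappa R_0(\kappa)$ together with $R(\kappa)C_+q+1=R(\kappa)(\mc L_0+\kappa)$ and $\kappa R_0(\kappa)-1=-R_0(\kappa)\mc L_0$ gives
\[
\bigl[\kappa R(\kappa)-(m+1)\bigr]\chi_y = m(1-\chi_y) - R(\kappa)C_+\bigl(q(1-\chi_y)\bigr) - R(\kappa)\mc L_0\chi_y,
\]
so its $L^2$ norm is controlled by $\|m(1-\chi_y)\|_{L^2}+\|q(1-\chi_y)\|_{L^2}+\|\mc L_0\chi_y\|_{L^2}$. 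Each of these goes to zero: the last because $\|\chi_y'\|_{L^2}^2=\tfrac{\pi}{2y}$, and the first two by dominated convergence since $|1-\chi_y|\le 2$ and $1-\chi_y\to 0$ pointwise. The uniformity over $L^2$-compact sets follows immediately from the form of these remainders (tightness plus a finite $\eta$-net); your Lipschitz-plus-$3\eps$ scheme would also work but is heavier than needed. No commutator identities, no $I_+$.
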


\begin{proof}
Using the resolvent identity and elementary manipulations, we find that
\begin{align*}
[\kappa R(\kappa) - 1]\chi_y &= R(\kappa) C_+ q \kappa R_0(\kappa) \chi_y  + [\kappa R_0(\kappa) - 1]\chi_y  \\
	&= R (\kappa)q_+ - R(\kappa) C_+(q-q\chi_y) +  [R(\kappa) C_+ q + 1][\kappa R_0(\kappa) - 1]\chi_y.
\end{align*}
As $R(\kappa)q_+=m$ and $\kappa R_0(\kappa)=1-R_0(\kappa)\mc L_0$, we deduce that 
\begin{align*}
\bigl\| [\kappa R(\kappa) - (m +1)]\chi_y \bigr\|_{L^2} \lesssim 
	\bigl\| m(1-\chi_y) \bigr\|_{L^2} + \bigl\| q(1-\chi_y) \bigr\|_{L^2} + \bigl\| \mc L_0\chi_y \bigr\|_{L^2} ,
\end{align*}
which converges to zero as $y\to\infty$, uniformly on compact subsets of $\BA\cap L^2(\R)$.

To complete the proof of \eqref{E:P10}, it remains to show that $(m +1) C_+ (\ol m\chi_y)\to 0$ in $L^2$ as $y\to\infty$.
Noting that $C_+(\ol m)=0$, we find
\begin{align*}
\| (m +1) C_+ (\ol m\chi_y) \|_{L^2}  \lesssim \bigl[1+\|m\|_{H^{s+1}} \bigr]\| \ol m(1-\chi_y) \|_{L^2} \to 0 \qtq{as} y\to\infty,
\end{align*}
uniformly on compact subsets of $\BA\cap L^2(\R)$.
\end{proof}

Next we record another algebraic virtue of the operators $\mcPbk$, regarding their commutator properties with the operator $X$.

\begin{lemma}\label{L:comm 2}
Let $A>0$ and $\kappa_0(A)$ satisfy Convention~\ref{C:Akmn} and suppose $q\in \BA$ satisfies $q \in H^\infty(\R)$ and $\langle x\rangle q\in L^2(\R)$.  Then
\begin{equation}\label{comm 2}
[X,\mcPbk] = - \kappa R(\kappa,q)^2 
\end{equation}
as operators on $D(X)$.
\end{lemma}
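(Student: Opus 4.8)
The plan is to compute the commutator term-by-term against the definition $\mcPbk = -i\kappa(\mc L+\kappa)^{-1} + i(m+1)C_+(\ol m+1)$, using the commutator identities already recorded in Section~\ref{s:pre}, namely $[X,\mc L] = i - \tfrac{i}{2\pi}q_+ I_+$ and $[X,C_+q]f = \tfrac{i}{2\pi}q_+I_+(f)$ from \eqref{comm 1}, together with the fact that $1/(x-z)$ and its relatives behave well under $X$. First I would handle the resolvent piece: since $[X,\mc L+\kappa]=[X,\mc L]$, the resolvent identity gives
\begin{equation*}
[X,(\mc L+\kappa)^{-1}] = -(\mc L+\kappa)^{-1}[X,\mc L](\mc L+\kappa)^{-1} = -R(\kappa)\bigl(i - \tfrac{i}{2\pi}q_+I_+\bigr)R(\kappa),
\end{equation*}
so that $[X,-i\kappa R(\kappa)] = -\kappa R(\kappa)^2 + \tfrac{1}{2\pi}\kappa R(\kappa)q_+ I_+\circ R(\kappa)$. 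The first term is exactly the right-hand side of \eqref{comm 2}; everything else must cancel.

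Next I would attack the Toeplitz piece $i(m+1)C_+(\ol m+1)$. Here $X$ acts as a derivation up to the rank-one corrections from \eqref{comm 1}: commuting $X$ past the multiplication by $(m+1)$ on the left produces $i(Xm - mX)\cdot C_+(\ol m+1) = \dots$, but more usefully one writes $[X, (m+1)C_+(\ol m+1)] = [X,(m+1)]\,C_+(\ol m+1) + (m+1)[X,C_+(\ol m+1)]$, and $[X,C_+(\ol m+1)]f = \tfrac{i}{2\pi}\,\overline{(\ol m+1)}_+\,I_+(f)$—but $\overline{(\ol m+1)}_+ = (m+1)_+ = m+1$ since $m\in H^{s+1}_+$. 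Meanwhile $[X,(m+1)]$ as an operator on $D(X)$ is just multiplication by the "function" $Xm$ in the appropriate conditional sense; I expect the cleanest route is to note $(X(m+1)g - (m+1)Xg)$ reduces, via $\widehat{Xf} = i\,d\widehat f/d\xi$, to multiplication by $x$ distributed by Leibniz, i.e. $[X,(m+1)]$ acts as multiplication by $x\cdot 1$ only through $m$'s Fourier profile. The honest statement is that for $g\in D(X)$ one has $X\bigl((m+1)C_+((\ol m+1)g)\bigr) - (m+1)C_+((\ol m+1)Xg)$ collecting a "$Xm$" multiplier term plus a $\tfrac{i}{2\pi}(m+1)(m+1)I_+((\ol m+1)g)$ rank-one term. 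The $Xm$ term is where I must invoke the hypothesis $\langle x\rangle q\in L^2$ and $q\in H^\infty$: these guarantee (via Lemma~\ref{L:prop x}, alluded to in the remark after Lemma~\ref{L:other beta}) that $m$ lies in the domain of $X$, so $Xm$ is a genuine $L^2_+$ function and the manipulations are legitimate.

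The main obstacle is bookkeeping the rank-one corrections so they cancel. I expect the surviving non-derivation terms to be: $\tfrac{1}{2\pi}\kappa R(\kappa)q_+\, I_+(R(\kappa)\,\cdot\,)$ from the resolvent piece and $-\tfrac{1}{2\pi}(m+1)^2\, I_+((\ol m+1)\,\cdot\,)$ from the Toeplitz piece, plus whatever the "$Xm$" multiplier and the cross terms from Leibniz contribute. To show these cancel I would use $m = R(\kappa)q_+$, hence $(m+1) = \kappa R(\kappa)(\ol m+1)^{-1}\cdots$—more precisely the identity $(\mc L_0+\kappa)m = q_+ - C_+(qm)$ rearranged, or better the clean relation $C_+((\ol m+1)g)$ paired against $(m+1)$: by the computation in the proof of Proposition~\ref{P:HK} (the lines deriving \eqref{Hk flow Pk}), one has $C_+(m+1)C_+(\ol m+1)q_+$ type identities that convert $(m+1)C_+(\ol m+1)$-expressions back into resolvent expressions. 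In particular I would show $I_+((\ol m+1)g)$ and $I_+(R(\kappa)g)$ are proportional via $\kappa$, using $I_+((\ol m+1)g) = I_+(g) + I_+(\ol m g)$ and $\ol m\to 0$ at infinity while $I_+$ only sees the behavior captured by $\lim_{\xi\downarrow 0}\sqrt{2\pi}\,\widehat{(\cdot)}(\xi)$; likewise $\kappa R(\kappa)g = g - R(\kappa)\mc L g$ so $I_+(\kappa R(\kappa)g) = I_+(g) - I_+(R(\kappa)\mc L g)$. Matching these, and matching $\kappa R(\kappa)q_+ = \kappa m$ against $(m+1)^2$-type factors using $m\in L^\infty$ with $\|m\|_\infty<1$, should force the cancellation. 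I would carry this out in the order: (1) resolvent commutator; (2) Toeplitz commutator with all rank-one terms made explicit; (3) regroup into $-\kappa R(\kappa)^2$ plus "error"; (4) show the error vanishes identically using the $m$-identities and properties of $I_+$; invoking $\langle x\rangle q\in L^2$ exactly at step (2) to license treating $Xm$ as an element of $L^2_+$. The computation is routine modulo this organizational care, so I would not grind through it here.
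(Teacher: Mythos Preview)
Your overall strategy matches the paper's exactly: split $\mcPbk$ into the resolvent piece and the Toeplitz piece, compute $[X,\cdot]$ on each via the rank-one formula \eqref{comm 1}, and show the rank-one remainders cancel. But your execution of the Toeplitz commutator is wrong, and this is where the proof would fail.

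First, applying \eqref{comm 1} to the symbol $\ol m+1$ gives $[X,C_+(\ol m+1)]f=\tfrac{i}{2\pi}(\ol m+1)_+I_+(f)$, \emph{not} $\tfrac{i}{2\pi}\overline{(\ol m+1)}_+I_+(f)$; there is no complex conjugate in \eqref{comm 1}. Since the identity part commutes with $X$ and $(\ol m)_+=C_+\ol m=0$ on the line, this commutator is \emph{zero}. So your surviving term $-\tfrac{1}{2\pi}(m+1)^2 I_+((\ol m+1)\cdot)$ is spurious and will not cancel against the correct residual from the resolvent piece.

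Second, you miss the clean way to handle $[X,(m+1)]$. Because $m\in H^{s+1}_+$, multiplication by $m$ already preserves $L^2_+$, so on $L^2_+$ it \emph{is} the Toeplitz operator $C_+m$, and \eqref{comm 1} gives directly $[X,m]f=\tfrac{i}{2\pi}m\,I_+(f)$. There is no need to make sense of ``$Xm$'' as an $L^2$ function; in fact, your claim that $\langle x\rangle q\in L^2$ forces $m\in D(X)$ is false. The paper notes explicitly (in the proof of Lemma~\ref{L:prop x}) that only $\langle x\rangle(m+\ol m)$ lies in $L^2$; Fourier truncation generically introduces a jump at $\xi=0$, so $\langle x\rangle m\notin L^2$.

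With these two corrections, the Toeplitz commutator collapses to $[X,i(m+1)C_+(\ol m+1)]f=-\tfrac{1}{2\pi}m\cdot I_+\bigl(f+C_+(\ol m f)\bigr)$. The hypothesis $\langle x\rangle q\in L^2$ then enters for a different reason than you anticipate: it guarantees $\ol m f\in L^1$ so that $I_+(C_+(\ol m f))=\int\ol m f=\langle q_+,Rf\rangle=I_+(C_+(qRf))$, and it guarantees $C_+(qRf)\in D(X)$. Combining this with $I_+(f)=\kappa I_+(Rf)-I_+(C_+(qRf))$ yields $I_+(f+C_+(\ol m f))=\kappa I_+(Rf)$, and now the cancellation with $-i\kappa R[X,C_+q]R$ is immediate since $m=Rq_+$.
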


\begin{proof}
We adopt the shorthand $R = R(\kappa;q)$.  We have
\begin{align}\label{6:21}
-i\kappa [X, R] = i\kappa R [X,\mc L] R = - \kappa R^2 - i\kappa R [ X, C_+ q] R .
\end{align} 

Using \eqref{comm 1} for $m\in H^\infty(\R)$ and $f\in D(X)$, we obtain
\begin{align}
[X,i(m+1)C_+(\ol{m}+1)]f 
&= i[X,(m+1)] C_+ (\ol{m}+1) f + i(m+1) [X, C_+(\ol{m}+1)]f \notag\\
&= i [X, m]C_+(1+\ol{m}) f \notag \\
&= - \tfrac{1}{2\pi} m \cdot I_+\bigl( f + C_+(\ol{m} f) \bigr) \notag\\
& = - \tfrac{1}{2\pi} R q_+ \cdot I_+\bigl( f + C_+(\ol{m} f) \bigr). \label{6:20}
\end{align}

Using \eqref{I+ defn} and noting that $\ol m f\in L^1$, we find
\begin{equation*}
I_+\bigl( C_+(\ol{m}f) \bigr) = \int\ol m f\, dx= \langle R q_+ , f\rangle = \langle q_+ , R f\rangle = I_+\big( C_+(qRf) \big) .
\end{equation*}
Note that the hypothesis $\langle x\rangle q\in L^2$ ensures that $ C_+(qRf)\in D(X)$ whenever $f\in D(X)$.   As derivatives vanish at zero frequency, we also have
\begin{equation*}
I_+\bigl( f \bigr) = I_+\bigl( (\mc L + \kappa) R f \bigr) = \kappa I_+\bigl( R f \bigr) - I_+\bigl( C_+(q R f) \bigr) .
\end{equation*}

Employing the last two identities in \eqref{6:20} and invoking \eqref{comm 1}, we obtain
\begin{align*}
[X,i(m+1)C_+(\ol{m}+1)]f &= i\kappa R [ X, C_+ q] R f. 
\end{align*}
The identity~\eqref{comm 2} now follows by combining this with \eqref{6:21}.
\end{proof}

Our last result before the proof of Theorem~\ref{T:Gerard} ensures the propagation of the weighted decay condition $\langle x\rangle q\in L^2(\R)$ under the flow \eqref{beta flow}.

\begin{lemma}\label{L:prop x}
Let $A>0$ and $\kappa_0(A)$ satisfy Convention~\ref{C:Akmn} and suppose $q^0\in\BA$ satisfies $\langle x\rangle q^0(x)\in L^2(\R)$ and $q^0\in H^\infty(\R)$.  Let $q(t)$ denote the evolution of $q^0$ under \eqref{beta flow} with $\kappa\geq \kappa_0$.  Then
\begin{align}\label{weighted}
\bigl\|\langle x\rangle q(t,x) \bigr\|_{L^2} + \bigl\| q(t,x) \bigr\|_{H^\sigma} +  \bigl\|\langle x\rangle [n(x;\vk, q(t))+\ol{n}(x;\vk, q(t))] \bigr\|_{L^2} < \infty
\end{align}
for all $t\in\R$, all $\vk\geq \kappa_0$, and all $\sigma\in \N$.
\end{lemma}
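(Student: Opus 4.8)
The plan is to establish the three bounds in \eqref{weighted} in the order listed — the first two together — with the weighted $L^2$ estimate on $q$ as the crux.

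I start with the standing reductions. Since the flow \eqref{beta flow} conserves $\beta(z;\cdot)$ as a meromorphic function on $\{\Re z>0\}$ (by \eqref{poisson 2} together with the identity theorem), the trajectory $q(t)$ stays in $(\BA)_{**}$; hence Convention~\ref{C:Akmn} applies along the whole trajectory, $\|q(t)\|_{H^s}$ is bounded uniformly in $t$ by Corollary~\ref{C:Qstar}, and all of the estimates for $m$, $n$, $R(\kappa;q)$ hold with $t$-independent constants. Propagation of $H^\sigma$ regularity for every $\sigma\in\N$ — and in particular $q(t)\in H^\infty$, the middle term of \eqref{weighted} — then follows essentially as in the Gronwall argument in the proof of Theorem~\ref{t:Hk flow}: the quantitative bound \eqref{quant H infty} shows $q\mapsto(m+\ol m+|m|^2)'$ maps $H^\sigma$ into itself with constant governed by $\|q\|_{H^{s+\sigma}_\kappa}$, so the higher Sobolev norms can grow at most exponentially in $t$ (recall \eqref{beta flow} is the $H_\kappa$ flow up to a rescaling of time and a translation).

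The crux is the bound $\langle x\rangle q(t)\in L^2$. The key structural observation is that $\mcPbk$ of \eqref{PbkR} is a \emph{bounded} skew-adjoint operator on $L^2_+$: its first summand $-i\kappa(\mc L+\kappa)^{-1}$ is bounded and skew-adjoint since $(\mc L+\kappa)^{-1}$ is bounded and selfadjoint, while $(m+1)C_+(\ol m+1)$ is a bounded selfadjoint operator on $L^2_+$ thanks to $H^{s+1}\hookrightarrow L^\infty$ and $\|m\|_{L^\infty}<1$ from \eqref{m est}. Thus $\mcPbk$ generates a strongly continuous unitary propagator $U(t,s)$ on $L^2_+$, and by \eqref{PbkR spec} we have $q_+(t)=U(t,0)q_+(0)$. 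Because $\langle x\rangle q^0\in L^2$, the datum satisfies $q^0_+\in D(X)$ with $\|Xq^0_+\|_{L^2}\le\|\langle x\rangle q^0\|_{L^2}$. Now use that the commutator identity of Lemma~\ref{L:comm 2} — $[X,\mcPbk]=-\kappa R(\kappa;q)^2$ — holds for all $q\in\BA$ (the extra hypothesis in that lemma is removable by approximation, both sides being bounded operators depending continuously on $q$): since $[X,\mcPbk]$ is bounded and continuous in $t$, a routine regularization of $X$ (e.g.\ by the bounded operators $X(1+i\varepsilon X)^{-1}$) gives
$$
XU(t,0)=U(t,0)X-\kappa\int_0^t U(t,\sigma)\,R(\kappa;q(\sigma))^2\,U(\sigma,0)\,d\sigma
$$
on $D(X)$. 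Applying this to $q^0_+$, invoking the unitarity of $U$ and the uniform bound $\|R(\kappa;q(\sigma))^2q_+(\sigma)\|_{L^2}\lesssim\|q(\sigma)\|_{H^s_\kappa}\lesssim1$, we obtain $\|Xq_+(t)\|_{L^2}\le\|\langle x\rangle q^0\|_{L^2}+C|t|<\infty$. Finally, $q(t)$ is real and lies in $L^1$ (being simultaneously in $L^2$ and in weighted $L^2$), so $\wh{q(t)}$ is continuous, $\wh{q(t)}(0)\in\R$, and the elementary Fourier-side identity $xq(t)=Xq_+(t)+\ol{Xq_+(t)}$ shows $\|Xq_+(t)\|_{L^2}<\infty$ is equivalent to $\langle x\rangle q(t)\in L^2$. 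I expect the justification of this conjugation formula for $U(t,0)$ — equivalently, of the differential inequality $\tfrac{d}{dt}\|Xq_+(t)\|_{L^2}\lesssim1$ obtained by pairing $\tfrac{d}{dt}(Xq_+)=\mcPbk(Xq_+)-\kappa R(\kappa;q)^2q_+$ against $Xq_+$ and using skew-adjointness — to be the main technical obstacle.

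For the last term of \eqref{weighted} I work at a fixed time $t$, using the facts about $q(t)$ just established. Writing \eqref{m eq} (with $\kappa$ replaced by $\vk$) on the Fourier side gives $(\xi+\vk)\wh n(\xi)=\wh q(\xi)+\wh{qn}(\xi)$ for $\xi>0$; differentiating in $\xi$ and dividing by $\xi+\vk$ shows $\partial_\xi\wh n\in L^2(0,\infty)$, since $\partial_\xi\wh q\in L^2$ (previous step) and $\partial_\xi\wh{qn}=\wh{-ix\,qn}\in L^2$ because $x\,qn=(xq)\,n$ with $xq\in L^2$ and $\|n\|_{L^\infty}<1$. Hence $\wh n\in H^1([0,\infty))$ and the trace $\wh n(0^+)$ exists; evaluating the equation at $\xi=0^+$ and invoking \eqref{beta} gives $\vk\,\wh n(0^+)=\tfrac{1}{\sqrt{2\pi}}\bigl(\int q\,dx+\beta(\vk;q)\bigr)$, which is \emph{real} since $\int q\,dx$ is conserved and $\beta(\vk;q)$ is real by Proposition~\ref{P:beta}. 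The real-valued function $N:=n+\ol n$ satisfies $\wh N=\wh n$ on $(0,\infty)$, $\wh N(\xi)=\ol{\wh n(-\xi)}$ on $(-\infty,0)$, and is $H^1$ on each half-line; the only obstruction to $\wh N\in H^1(\R)$ — a jump at the origin equal to $\wh n(0^+)-\ol{\wh n(0^+)}=2i\Im\wh n(0^+)$ — vanishes by the reality just noted. Therefore $\wh N\in H^1(\R)$, i.e.\ $\langle x\rangle(n+\ol n)\in L^2(\R)$, with bounds uniform in $t\in\R$ and $\vk\ge\kappa_0$. The delicate point here is exactly this cancellation: individually the $O(1/x)$ tails of $n$ and $\ol n$ keep $\langle x\rangle n$ out of $L^2$, and it is the reality of $\beta$ that forces them to cancel.
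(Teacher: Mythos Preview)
Your approach to the weighted bound on $q(t)$ is genuinely different from the paper's and structurally appealing. The paper works directly with the PDE: from \eqref{m eq} and its conjugate one has $(|\partial|+\kappa)(m+\ol m)=q+C_+(qm)+C_-(q\ol m)$; after checking (via the middle equality in \eqref{beta}) that the last two terms introduce no jump at zero frequency, this yields $\|\langle x\rangle(m+\ol m+|m|^2)'\|_{L^2}\lesssim\|\langle x\rangle q\|_{L^2}$, and Gronwall closes. Your route through the unitary propagator and Lemma~\ref{L:comm 2} makes the Lax structure do the work and produces linear-in-$t$ growth of $\|Xq_+(t)\|_{L^2}$ directly. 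Your treatment of $\langle x\rangle(n+\ol n)$ is essentially the paper's, rephrased on the Fourier side.

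There is, however, a genuine gap in the passage from $Xq_+(t)\in L^2$ to $\langle x\rangle q(t)\in L^2$. You justify this by saying $q(t)\in L^1$ ``being simultaneously in $L^2$ and in weighted $L^2$''---but weighted $L^2$ is precisely what you are proving, so this is circular. The point is not cosmetic: for real $q\in L^2$ with $q_+\in D(X)$, the identity $xq=Xq_++\ol{Xq_+}$ holds (and hence $\langle x\rangle q\in L^2$) \emph{if and only if} $\wh{q_+}(0^+)$ is real; otherwise $\wh q$ jumps at the origin and $xq\notin L^2$. The fix is to show $I_+(q_+(t))=I_+(q_+^0)\in\R$: writing $I_+(q_+(t))=\lim_{y\to\infty}\langle U(t,0)^*\chi_y,\,q_+^0\rangle$ and using $U(t,0)^*\chi_y-\chi_y=-\int_0^t U(\sigma,0)^*\mcPbk(\sigma)\chi_y\,d\sigma\to0$ in $L^2$ by Lemma~\ref{L:P10}, one gets $I_+(q_+(t))=I_+(q_+^0)=\int q^0\in\R$. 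A smaller point: your claim that the weighted hypothesis in Lemma~\ref{L:comm 2} is ``removable by approximation, both sides being bounded operators'' needs care, since $[X,\mcPbk]$ is only defined on $D(X)$. The correct statement is that for fixed $f\in D(X)$ and $q_j\to q$ with $\langle x\rangle q_j\in L^2$, one has $\mcPbk(q_j)f\to\mcPbk(q)f$ and $X\mcPbk(q_j)f=\mcPbk(q_j)Xf-\kappa R(\kappa;q_j)^2f$ converging in $L^2$; closedness of $X$ then gives $\mcPbk(q)f\in D(X)$ and the identity.
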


\begin{proof}
The smoothness of solutions to \eqref{beta flow} follows from \eqref{quant H infty} and a simple Gronwall argument.  Our main focus here is on spatial decay.

Combining \eqref{m eq} and its complex conjugate shows
\begin{align}\label{(:22}
\bigl( |\partial| + \kappa \bigr) (m+\ol m) = q + C_+(qm)+C_-(q\ol m).
\end{align}

We first study the last two terms in \eqref{(:22}.  As $H^{s+1}\hookrightarrow L^\infty$, so
\begin{align*}
\| \langle x\rangle qm \|_{L^2} + \| \langle x\rangle q\ol m \|_{L^2} \lesssim \| \langle x\rangle q \|_{L^2} \| m \|_{H^{s+1}}.
\end{align*}
This shows that $\widehat {qm} \in H^1(\R)$ and likewise for the Fourier transform of $q\ol m$.  To deduce that $\langle x\rangle[C_+(qm)+C_-(q\ol m)]$ is square integrable, we need to confirm only that the Fourier transform has no discontinuity at the origin.  This is guaranteed by the middle equality in \eqref{beta}.

The arguments presented in the previous paragraph yield the quantitative bound
\begin{align*}
\bigl\| \langle x\rangle [ q + C_+(qm)+C_-(q\ol m) ] \bigr\|_{L^2} \lesssim \bigl[1 + \| m \|_{H^{s+1}} \bigr] \| \langle x\rangle q \|_{L^2} .
\end{align*}
Noting that the commutator $[x,|\partial|]$ is $L^2$ bounded, this can be combined with \eqref{(:22} to yield
\begin{align}\label{(:38}
\bigl\| \langle x\rangle [ m+ \ol m \;\!] \bigr\|_{H^1} \lesssim \bigl[1 + \| m \|_{H^{s+1}} \bigr] \| \langle x\rangle q \|_{L^2} .
\end{align}
This does \emph{not} say that $\langle x\rangle m \in L^2$ because Fourier truncation will typically introduce a discontinuity at the frequency origin.  Taking a derivative remedies this and we may conclude that
\begin{equation} \label{(:53}\begin{aligned}
\bigl\| \langle x\rangle \bigl[ m + \ol m + |m|^2 \bigr]' \bigr\|_{L^2} &\lesssim \bigl[1 + \| m \|_{L^\infty} \bigr] \| \langle x\rangle m' \|_{L^2} \\\
	&\lesssim \bigl[1 + \| m \|_{H^{s+1}} \bigr]^2 \| \langle x\rangle q \|_{L^2} .
\end{aligned}
\end{equation}

Combining \eqref{(:53} with a simple Gronwall argument shows that $\langle x\rangle q(t,x) \in L^2$ for all time.  Combining this with \eqref{(:38} provides the claimed bounds for the function $n=m(x;\vk,q(t))$.
\end{proof}

\begin{proof}[Proof of Theorem~\ref{T:Gerard}]
We start by observing that both sides of \eqref{gerard beta R} depend continuously on $q^0$ in $L^2(\R)$.  In the case of the left-hand side, this follows from the well-posedness of the flow on $L^2(\R)$.  Regarding the right-hand side, we note that $X-t\kappa R^2$ is also maximally accretive (with the same domain as $X$) and so $X-t\kappa R^2-z$ is boundedly invertible on $L^2$ for $z\in \C$ with $\Im z>0$.  In this way, continuity follows from the resolvent identity.

By virtue of this continuity, it suffices to verify \eqref{gerard beta R} for the special case of initial data $q^0\in H^\infty$ satisfying $\langle x\rangle q^0\in L^2$.  Lemma~\ref{L:prop x} guarantees that these properties remain true for $q(t)$ and so allow us to apply Lemma~\ref{L:comm 2} at all times.

As $t\mapsto \mcPbk(t)$ is a continuous curve of bounded anti-selfadjoint operators, so
\begin{equation*}
\tfrac{d}{dt} U(t) = \mcPbk(t)  U(t) \qtq{with} U(0) = \Id 
\end{equation*}
has a unique solution, which is unitary at every time.  Moreover, by virtue of the Lax pair representation and \eqref{PbkR spec},  we know that
\begin{equation}\label{U magic}
U(t)^* \:\!\! R(\kappa;q(t)) U(t) = R(\kappa;q^0)  \qtq{and} q_+(t) = U(t) q^0_+  \qtq{for all} t\in \R.
\end{equation}

Fixing $z$ with $\Im z >0$, we consider two one-parameter families of bounded operators:
\begin{equation}\label{Y12}
Y_1(t) := \big( X - t\kappa R(\kappa;q^0)^2 - z \big)^{-1} \qtq{and} Y_2(t):=U(t)^* (X-z)^{-1} U(t) .
\end{equation}
Both are solutions to
\begin{equation}\label{Y eqn}
\tfrac{d}{dt} Y(t)= \kappa  Y(t) R(\kappa;q^0)^2 Y(t)  \qtq{with} Y(0)=(X-z)^{-1} .
\end{equation}
In the case of $Y_1$, this follows immediately from the resolvent identity.  For $Y_2(t)$, it follows from Lemma~\ref{L:comm 2} and \eqref{U magic}:
\begin{align*}
\tfrac{d}{dt} Y_2(t)=U(t)^* [(X-z)^{-1},\mcPbk] U(t) &=  - U(t)^* (X-z)^{-1} [X,\mcPbk] (X-z)^{-1} U(t) \\
	&=\kappa  Y_2(t) U(t)^* R(\kappa;q(t))^2 U(t) Y_2(t) \\
	&=  \kappa  Y_2(t) R(\kappa;q^0)^2 Y_2(t) .
\end{align*}

A simple Gronwall argument (in operator norm) shows that \eqref{Y eqn} has at most one solution and consequently,
\begin{equation}
\big( X - t\kappa R(\kappa;q^0)^2 - z \big)^{-1} q^0_+ = U(t)^* (X-z)^{-1} U(t) q^0_+ 
\end{equation}
for all times.  Recalling \eqref{U magic} and the Cauchy integral formula \eqref{CIT}, this yields
\begin{align*}
q_+(t,z)  =  \lim_{y\to\infty} \tfrac{1}{2\pi i} \bigl\langle U(t)^* \chi_y, \big( X - t\kappa R(\kappa;q^0)^2 - z \big)^{-1} q^0_+ \bigr\rangle.
\end{align*}
To complete the proof of \eqref{gerard beta R}, it remains only to observe that $U(t)^*\chi_y-\chi_y\to 0$ in $L^2$ as $y\to \infty$, uniformly for $t$ in compact sets, which follows easily from Lemma~\ref{L:P10}.
\end{proof}

The proof of Theorem~\ref{T:Gerard} shows that the mapping between the Hamiltonian and the time-dependent term in the explicit formula is actually linear.  Suppose, for example, we adopt
\begin{equation}\label{phi as H}
 \sum c_j \beta(\kappa_j) = \langle q_+, \phi(\mc L)q_+\rangle \qtq{where} \phi(E) = \sum c_j (E+\kappa_j)^{-1}
\end{equation}
as the Hamiltonian.  This admits a Lax pair representation with $\mc P = \sum c_j \mc P^\beta_{\kappa_j}$.  Furthermore, taking the commutator with $X$ is also a linear operation.  In this way, we find the associated explicit formula
\begin{equation}\label{psi as GE}
q_+(t,z) = \tfrac{1}{2\pi i} I_+ \Big( \big( X - t \psi(\mc L_{q_0}) - z \big)^{-1} q^0_+\Big) \qtq{with} \psi(E) = \phi(E) +  E\phi'(E).\end{equation}

One may also allow $\phi\equiv 1$, which leads to the Hamiltonian $P$ generating translations and to $\psi(\mc L_{q_0})=\Id$.  In this setting, the formula \eqref{psi as GE} is a direct consequence of \eqref{CIT}.  Indeed, $t$ is merely modifying the real part of $z$.  This parallels our discussion in the introduction of the role of $t_0$ in the definition of the $\tau$-function.

Underlining such a $\tau$-function interpretation is the fact that linear combinations of the functions $1$ and $E\mapsto\tfrac{1}{\kappa+E}$ are dense in the class of continuous functions on intervals of the form $[-E_0,\infty]$. 

The linearity property described above also allows us to consider performing a $\kappa\to\infty$ expansion of \eqref{gerard beta R}.  Recall from \eqref{beta as genfun} that this is precisely how $\beta(\kappa)$ encodes the traditional Hamiltonians.  Indeed, the \eqref{BO} flow corresponds to choosing $\phi(E)=E$ and so to $\psi(E)=2E$.  In this way, we recover the explicit formula
\begin{equation}
q_+(t,z) = \tfrac{1}{2\pi i} I_+ \big\{ \big( X - 2t\mc{L}_{q_0} - z \big)^{-1} q_+^0 \big\} 
\label{gerard BO R}
\end{equation}
presented in \cite{Gerard2022}; see also \cite{Sun2021} for the special case where $q$ is an exact multisoliton.

\medskip

We turn now to our last topic.  In \eqref{E:CofB defn} we introduce our extension of the notion of the center of momentum to all conserved quantities of the \eqref{BO} hierarchy, expressed through the generating function $\beta$.  The property that makes these special is that they move at a constant speed dictated by other Hamiltonians in the hierarchy.  As discussed in subsection~\ref{ss:higher}, this also generalizes the Fokas--Fuchssteiner recursion for the construction of conserved quantities.

\begin{theorem}[Virial identities]\label{T:Virial}
Suppose $\langle x\rangle q(x) \in L^2(\R)$.  Then
\begin{equation}\label{E:CofB defn}
\CofB(\vk) := \tfrac12 \int x q [n + \ol n] \,dx 
\end{equation}
satisfies
\begin{equation}\label{B CofB}
\bigl\{ \CofB(\vk)  , \beta(\kappa) \bigr\} = -\kappa \langle q_+, R(\kappa)R(\vk)R(\kappa) q_+\rangle
	= -\kappa \tfrac{\partial}{\partial\kappa} \ \tfrac{\beta(\kappa)-\beta(\vk)}{\kappa-\vk}.
\end{equation}
\end{theorem}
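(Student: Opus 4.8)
The plan is to compute the Poisson bracket directly using the functional derivative of $\CofB(\vk)$ and then simplify everything in terms of resolvents of $\mc L$ and the operator $X$, exploiting the commutator identities already established. First I would compute $\tfrac{\delta}{\delta q}\CofB(\vk)$. Writing $n = R(\vk)q_+$ and using the functional derivative \eqref{dm} of $m$ (applied at parameter $\vk$), together with the self-adjointness of $\mc L$, I expect to obtain an expression of the schematic form $\tfrac{\delta}{\delta q}\CofB(\vk) = \tfrac12 x(n+\ol n) + (\text{resolvent terms acting on } x q (n+\ol n))$; the bookkeeping here is routine but must be done carefully because $x$ does not commute with $C_\pm$, which is precisely where the operator $X$ of Lemma~\ref{L:X} and the commutator formula \eqref{comm 1} enter. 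Since the assertion \eqref{B CofB} is an identity between meromorphic functions of $\kappa$ (and holds under the decay hypothesis $\langle x\rangle q\in L^2$, which by Lemma~\ref{L:prop x} is preserved and which guarantees $n+\ol n\in L^1$, cf.\ the Remark after Lemma~\ref{L:other beta}), I would first reduce to $q\in H^\infty(\R)$ with $\langle x\rangle q\in L^2(\R)$ by density/continuity, exactly as in the proof of Theorem~\ref{T:Gerard}.

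The conceptual heart of the argument is to recognize that $\{\CofB(\vk),\beta(\kappa)\}$ is, up to sign, the derivative of $\beta(\kappa)$ along the Hamiltonian flow generated by $\CofB(\vk)$ — but it is cleaner to run this the other way. I would instead use $\{\CofB(\vk),\beta(\kappa)\} = -\{\beta(\kappa),\CofB(\vk)\}$ and interpret the right side as the rate of change of $\CofB(\vk)$ under the $\beta(\kappa)$ flow \eqref{beta flow}. Along that flow we have the Lax representation \eqref{PbkR} with the special properties \eqref{PbkR spec}, and crucially the commutator $[X,\mcPbk] = -\kappa R(\kappa)^2$ from Lemma~\ref{L:comm 2}. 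Writing $\CofB(\vk) = \tfrac12\langle q_+, (X+X^*)n\rangle$ — more precisely as $\operatorname{Re}\langle q_+, Xn\rangle$ after checking the pairing makes sense using $n+\ol n\in L^1$ and $\langle x\rangle(n+\ol n)\in L^2$ — and differentiating in time using $\tfrac{d}{dt}q_+ = \mcPbk q_+$, $\tfrac{d}{dt}n = \mcPbk n$, and the anti-selfadjointness of $\mcPbk$, the two ``transported'' terms cancel and one is left with $\tfrac{d}{dt}\CofB(\vk) = \operatorname{Re}\langle q_+, [X,\mcPbk]\, n\rangle = -\kappa\operatorname{Re}\langle q_+, R(\kappa)^2 n\rangle = -\kappa\langle q_+, R(\kappa)R(\vk)R(\kappa)q_+\rangle$, using $n = R(\vk)q_+$ and the commutativity of $R(\kappa)$ and $R(\vk)$; self-adjointness makes the real part superfluous. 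The final equality in \eqref{B CofB} is then pure resolvent calculus: $R(\kappa)R(\vk)R(\kappa) = -\tfrac{\partial}{\partial\kappa}\bigl[(\kappa-\vk)^{-1}(R(\kappa)-R(\vk))\bigr]$ paired against $q_+$ on both sides, combined with the representation \eqref{beta2} of $-(\beta(\kappa)-\beta(\vk))/(\kappa-\vk)$.

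The main obstacle I anticipate is making the formal manipulation with the unbounded, non-selfadjoint operator $X$ rigorous: $X$ is only densely defined on the Hardy space, the pairing $\langle q_+, Xn\rangle$ requires $q_+$ to lie in $D(X^*)$ (functions integrating to zero, which $q_+$ does not in general) or, alternatively, requires interpreting $\CofB(\vk)$ via the genuine integral $\tfrac12\int xq(n+\ol n)\,dx$ and justifying the exchange of $\tfrac{d}{dt}$ with that integral. I would handle this by working throughout with the regularized pairing $\langle \chi_y, \cdot\rangle$ from \eqref{I+ defn}, running the computation for finite $y$ where all operators are bounded, and then sending $y\to\infty$ using Lemma~\ref{L:P10} (which gives $\mcPbk\chi_y\to 0$) together with the weighted bounds of Lemma~\ref{L:prop x} to control the limit; the decay hypothesis is exactly what keeps the boundary/regularization terms from contributing. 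A secondary technical point is checking that $\CofB(\vk)$ is differentiable along the flow and that its functional derivative is the one used in the Poisson-bracket definition \eqref{Poisson}, but this follows from the smoothness and decay propagated by Lemma~\ref{L:prop x} together with the explicit form of $\tfrac{\delta}{\delta q}\CofB(\vk)$.
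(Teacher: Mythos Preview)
Your proposal is correct and follows essentially the same route as the paper: interpret $\{\CofB(\vk),\beta(\kappa)\}$ as the time derivative of $\CofB(\vk)$ along the $\beta(\kappa)$ flow, use the special properties \eqref{PbkR spec} together with the anti-selfadjointness of $\mcPbk$ to reduce to the commutator $[X,\mcPbk]$, and then invoke Lemma~\ref{L:comm 2} and \eqref{beta2}.

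The one simplification the paper makes that you miss is where to place $X$. You write $\CofB(\vk)=\Re\langle q_+, Xn\rangle$, which forces you to worry about whether $n\in D(X)$ and to propose the $\chi_y$ regularization of Lemma~\ref{L:P10}. The paper instead observes that for real $f,g\in L^2(\R)$ with $\langle x\rangle f\in L^2$ one has
\[
\langle g_+, Xf_+\rangle + \langle Xf_+, g_+\rangle = \int x f(x) g(x)\,dx,
\]
and applies this with $f=q$, $g=n+\ol n$ (so $g_+=n$ on the line). This yields $\CofB(\vk)=\tfrac12\langle n, Xq_+\rangle+\tfrac12\langle Xq_+,n\rangle$ with $X$ acting on $q_+$, and $q_+\in D(X)$ is immediate from the hypothesis $\langle x\rangle q\in L^2$. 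The commutator step then reads $\tfrac12\langle n,[X,\mcPbk]q_+\rangle+\tfrac12\langle [X,\mcPbk]q_+,n\rangle$, and no regularization or appeal to Lemmas~\ref{L:P10}--\ref{L:prop x} is needed. Your version would also go through, but with the extra work you anticipated; placing $X$ on the factor that is already known to lie in $D(X)$ is the cleaner move.
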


\begin{proof}
Given a pair of real-valued functions $f,g\in L^2(\R)$ with $\langle x\rangle f(x) \in L^2(\R)$,
\begin{equation}
\langle g_+, X f_+\rangle + \langle X f_+, g_+ \rangle = \int_{-\infty}^\infty \ol{\widehat g(\xi)} \cdot i \widehat f'(\xi) \, d\xi =  \int x f(x) g(x) \,dx.
\end{equation}
In this way, we see that the definition of $\CofB$ may be rewritten as
\begin{equation}
\CofB(\vk) =  \tfrac12 \langle  n,  X q_+ \rangle + \tfrac12 \langle X q_+, n \rangle .
\end{equation}

Exploiting \eqref{PbkR spec} and the antisymmetry of $\mcPbk$, we deduce that
\begin{align*}
\bigl\{ \CofB(\vk)  , \beta(\kappa) \bigr\} &= \tfrac12 \langle n, [X,\mcPbk] q_+ \rangle + \tfrac12 \langle [X,\mcPbk] q_+, n \rangle.
\end{align*}
The first identity in \eqref{B CofB} now follows from \eqref{comm 2} and the selfadjointness of $R(\vk)$.  The second identity is a consequence of \eqref{beta2}.
\end{proof}

By expanding the resolvent,  we find that
\begin{equation*}
n = \vk^{-1} q_+ - \vk^{-2} \mc L q_+  + \vk^{-3} \mc L^2 q_+ \pm \cdots
\end{equation*}
and so also that
\begin{align}
\CofB(\vk) = \vk^{-1} \CofP  - \vk^{-2} \CofE + O(\vk^{-3}\bigr).
\end{align}
In this way, both \eqref{P virial} and \eqref{E virial} can be recovered as elementary corollaries of \eqref{B CofB} and the definition \eqref{beta} of $\beta$.

One cannot give an exhaustive account of all possible virial-type identities associated with \eqref{BO} or its hierarchy.  Our goal in this section has been to exhibit how our modified Lax representation begets dramatic algebraic simplifications.  Let us offer just one more example.  Consider
$$
\VofP(q) := \int \tfrac12 x^2 q^2 \,dx = \langle X q_+ , X q_+ \rangle,
$$
which may be viewed as expressing the variance of the momentum distribution.  By the results of this section, we find
\begin{align*}
\Bigl\{ {\int} \tfrac12 x^2 q^2 \,dx ,\ \beta(\kappa) \Bigr\} = 2\kappa \frac{d}{d\kappa} \CofB(\kappa)
\end{align*}
and consequently, this variance has a very simple time dependence under \eqref{beta flow}:
\begin{align*}
\VofP\bigl(q(t)\bigr)= - t^2\bigl( \kappa \tfrac{d^2\beta}{d\kappa^2} + \kappa^2\tfrac{d^3\beta}{d\kappa^3} \bigr)(\kappa;q(0))
	+ 2t\kappa \tfrac{d}{d\kappa} \CofB(\kappa;q(0))  + \VofP\bigl(q(0)\bigr).
\end{align*}
This represents the generalization to the full \eqref{BO} hierarchy of an important identity from \cite{Ifrim2019}.

\bibliographystyle{habbrv}
\bibliography{refs-2}

\end{document}